\numberwithin{equation}{section}
\newcommand{\toi}{\to\infty}
\newcommand{\dto}{%
	\mathrel{\vbox{\offinterlineskip\ialign{%
				\hfil##\hfil\cr
				$\scriptstyle d$\cr
				$\longrightarrow$\cr
			}}}}
\newcommand{\wto}{%
	\mathrel{\vbox{\offinterlineskip\ialign{%
				\hfil##\hfil\cr
				$\scriptstyle w$\cr
				$\longrightarrow$\cr
			}}}}
\newcommand{\Pto}{%
	\mathrel{\vbox{\offinterlineskip\ialign{%
				\hfil##\hfil\cr
				$\scriptstyle \pr$\cr
				$\longrightarrow$\cr
			}}}}
\newcommand{\teind}{%
	\mathrel{\vbox{\offinterlineskip\ialign{%
				\hfil##\hfil\cr
				$\scriptstyle d$\cr
				$=$\cr
			}}}}			
\newcommand{\eind}{\stackrel{d}{=}}
\newcommand{\pr}{\mathbb{P}} 
\newcommand{\ex}{\mathbb{E}}
\newcommand{\ZZ}{\mathbb{Z}}
\newcommand\ind[1]{\mathbbm{1}{\left\{#1\right\}}}
\newcommand\1[1]{\mathbbm{1}_{#1}}
\newcommand{\N}{\mathbb{N}}
\newcommand{\R}{\mathbb{R}} 
\newcommand{\Z}{\mathbb{Z}}
\newcommand{\bQ}{\boldsymbol{Q}}
\newcommand{\bX}{\boldsymbol{X}}
\newcommand{\bY}{\boldsymbol{Y}}
\newcommand{\bZ}{\boldsymbol{Z}}
\newcommand{\bC}{\boldsymbol{C}}
\newcommand{\bTheta}{\boldsymbol{\Theta}}
\newcommand{\bx}{\boldsymbol{x}}
\newcommand{\bt}{\boldsymbol{t}}
\newcommand{\bi}{\boldsymbol{i}}
\newcommand{\bj}{\boldsymbol{j}}
\newcommand{\bk}{\boldsymbol{k}}
\newcommand{\bo}{\boldsymbol{0}}
\newcommand{\bz}{\boldsymbol{z}}
\newcommandx\sequence[3][2=\ZZ,3=t]{(#1_{#3})_{#3\in#2}}
\newtheorem{theorem}{Theorem}[section]
\newtheorem{lemma}[theorem]{Lemma}
\newtheorem{definition}[theorem]{Definition}
\newtheorem{corollary}[theorem]{Corollary}
\newtheorem{proposition}[theorem]{Proposition}
\theoremstyle{remark}
\newtheorem{remark}[theorem]{Remark}
\newtheorem{example}[theorem]{Example}
\newcommand{\tcb}[1]{#1}
\newcommand{\shift}[1]{\varphi_{#1}}
\newcommand{\e}{\mathrm{e}}
\newcommand{\dx}{\mathrm{d}}
\newcommand\xqed[1]{%
  \leavevmode\unskip\penalty9999 \hbox{}\nobreak\hfill
  \quad\hbox{#1}}
\newcommand\demo{\xqed{$\qed$}}
\newcommand{\tlim}{\textstyle\lim}
\newcommand{\tsum}{\textstyle\sum}
\newcommand{\espace}{\mathbb{E}_0}
\DeclarePairedDelimiter\abs{\lvert}{\rvert}%
\DeclarePairedDelimiter\norm{\lVert}{\rVert}%
\let\oldabs\abs
\def\abs{\@ifstar{\oldabs}{\oldabs*}}
\let\oldnorm\norm
\def\norm{\@ifstar{\oldnorm}{\oldnorm*}}
\title{Palm theory for extremes of stationary regularly varying time series and random fields}
\author{ Hrvoje Planini\'c\thanks{Department of Mathematics, Faculty of science, University of Zagreb \newline  
    \hspace*{1.45em} Bijeni\v cka cesta 30, Zagreb, Croatia \newline    
    \hspace*{1.4em} \textit{E-mail:} planinic@math.hr}}
 \date{May 2, 2022}
\begin{document}

\maketitle

	\begin{abstract}
		The tail process $\bY=(Y_{\bi})_{\bi\in\Z^d}$ of a stationary regularly varying \tcb{random field} $\bX=(X_{\bi})_{\bi\in\Z^d}$ represents the asymptotic local distribution of $\bX$ as seen from its typical exceedance over a threshold $u$ as $u\to\infty$. Motivated by the standard Palm theory, we show that every tail process satisfies an invariance property called exceedance-stationarity and that this property, together with the spectral decomposition of the tail process, characterizes the class of all tail processes. We then restrict to the case when $Y_{\bi}\to 0$ as $|\bi|\to\infty$ and establish a couple of Palm-like dualities between the tail process and the so-called anchored tail process which, under suitable conditions, represents the asymptotic distribution of a typical cluster of extremes of $\bX$. The main message is that the distribution of the tail process is biased towards clusters with more exceedances. Finally, we use these results to determine the distribution of a typical cluster of extremes for moving average processes with random coefficients and heavy-tailed innovations.
\vskip 0.2cm

{\noindent\textit{Keywords}: tail process; \and regular variation; \and Palm theory; 
 \and stationarity; \and random fields; \and time series; \and moving averages}	
 \vskip 0.05 cm
 {\noindent\textit{MSC 2020}: 60G70; 60G10}	

		\end{abstract}

\section{Introduction}\label{sec:intro}

Consider a (strictly) stationary $\R$-valued random field $\bX=(X_{\bi})_{\bi\in\Z^d}$ with $d\in \N$; \tcb{if $d=1$ we say that $\bX$ is a time series}. A random field $\bY=(Y_{\bi})_{\bi\in\Z^d}$ is called the \textit{tail process} (\tcb{or \textit{tail field}})\footnote{\tcb{In the rest of the paper we will use the term \enquote{tail process} instead of \enquote{tail field}. This is mainly due to personal preference, but also since in almost all of the results the geometric structure of the index set is not important, i.e.\ almost all proofs for the case $d\geq 2$ are essentially the same as for the case $d=1$.}} of $\bX$, if $\pr(|Y_{\bo}|>1)=1$ and for all $m\in \N$, 
\begin{align}\label{eq:conv_to_tail}
 \pr \big( (u^{-1}X_{\bi})_{\bi \in \{-m,\dots, m\}^d} \in \cdot
\, \big| \, |X_{\bo}|> u \big) \wto 
\pr \big((Y_{\bi})_{\bi \in \{-m,\dots, m\}^d} \in \cdot \, \big) \, ,
\end{align}
as $u\toi$ in $\R^{(2m+1)d}$, where $\bo=(0,\dots,0)\in \Z^d$ and \enquote{$\wto$} denotes weak convergence. 

Existence of the tail process is equivalent to a regular variation property of $\bX$, and this is why such fields are called \textit{regularly varying}. The tail process of a regularly varying time series was introduced in Basrak and Segers~\cite{basrak:segers:2009}; for a detailed study, examples and further references see the recent monograph by Kulik and Soulier~\cite{kulik:soulier:2020}. Extension of the theory to random fields (i.e.\ to $d\geq 2$) was done by Wu and Samorodnitsky~\cite{wu:samorodnitsky:2020} and Basrak and Planini\'c~\cite{basrak:planinic:2020}. Very recently, Soulier~\cite{soulier:2021} considers even regularly varying continuous time stochastic processes.

Observe that  the tail process $\bY$ by definition always has an \textit{exceedance-point} at the origin in the sense that $|Y_{\bo}|>1$ almost surely. Moreover, one can informally say that it represents the asymptotic (local) distribution of the stationary field $u^{-1}\bX$ as $u\toi$ as seen from its \textit{typical} exceedance-point (located at the origin).\footnote{The term \enquote{typical} used here (and throughout the paper) has the strong motivation of being \enquote{chosen uniformly at random} (or simply \enquote{randomly chosen}) only under certain dependence assumptions on $\bX$, see  \Cref{sec:randomized_origin} and \Cref{exa:MA_counter} below, and also \cite{last:thorisson:2011, thorisson:2018} for a general discussion.} As a consequence, $\bY$ is in general not stationary. Still, the stationarity of $\bX$ yields a certain invariance property of the tail process $\bY$ given in (\ref{eq:time-change1}) below. We will characterize this invariance property as \textit{exceedance-stationarity}, see \Cref{sub:exc_stat} and in particular \Cref{thm:Y_is_point_stat}.
Intuitively, a process is exceedance-stationary if its distribution is the same as seen from any of its exceedance-points.
Exceedance-stationarity of the tail process can be seen as the extreme value analogue of the \textit{point-stationarity} property of the Palm distribution of a stationary point process.
The latter concept was introduced by Thorisson~\cite[Chapter 9]{thorisson:2000}, see also~\cite[Section 4.4.9]{chiu:2013} for a brief description written by G. Last.

Every tail process admits a so-called spectral decomposition: $|Y_{\bo}|$ is independent of the \textit{spectral (tail) process} $\bTheta=(\Theta_{\bi})_{\bi \in \Z^d}$ defined by
$$\Theta_{\bi}=|Y_{\bo}|^{-1} Y_{\bi} \, , \, \bi \in \Z^d \, ,$$ 
and moreover satisfies $\pr(|Y_{\bo}|>y)=y^{-\alpha}$, $y\geq 1$, for some $\alpha>0$; we say that $|Y_{\bo}|$ is $\mathrm{Pareto}(\alpha)$-distributed. The spectral process also satisfies a certain (strange-looking) property given in (\ref{eq:time-change0}) below. In the case $d=1$ it is often referred to as the \textit{time-change formula}.
It is known that this property characterizes the class of all (spectral) tail processes, see the recent works by Jan{\ss}en~\cite{janssen:2019}, Dombry~et~al.~\cite{dombry:hashorva:soulier:2018} and Hashorva~\cite{hashorva:2020}. \tcb{We will give a new, arguably more probabilistic, interpretation of the time-change formula (\ref{eq:time-change0}) by showing that, under the spectral decomposition, it is equivalent to the exceedance-stationarity of the tail process, see \Cref{prop:TCF_another_view} for details.}

\tcb{Further, one often considers fields $\bX$ whose exceedances over a high threshold group into independent and identically distributed (i.i.d.)\ \textit{clusters} as the threshold size tends to infinity, see (\ref{eq:Poisson_approx_for_clusters}) for a precise statement in the case of time series. As originally observed by Davis and Hsing~\cite[Theorem 2.5]{davis:hsing:1995}, the common distribution of these clusters equals the asymptotic distribution of a relatively small block of observations of $\bX$ conditionally on having at least one exceedance over the threshold;
see (\ref{eq:anchored_process_cluster}) for a precise description of this convergence. Basrak and Segers~\cite{basrak:segers:2009} refer to this distribution simply as the (asymptotic) distribution of a cluster of extremes (of $\bX$).  However, we will refer to it as the distribution of a \textit{typical} cluster of extremes since the distribution of the tail process, on the other hand, can be seen as the distribution of a cluster of extremes containing a {typical exceedance-point} of $\bX$ which is located at the origin; this is justified by  (\ref{eq:tail_process_cluster}) below.}

%

Basrak and Segers~\cite[Proposition 4.2]{basrak:segers:2009} showed that, when $d=1$, the distribution of a typical cluster equals the distribution of the tail process $\bY$ but conditionally on $\sup_{i\leq -1} |Y_{i}|\le 1$, or in words, conditionally on the  exceedance-point at the origin being the first exceedance-point in the corresponding cluster. Recently, \cite{basrak:planinic:2020} introduced the notion of \textit{anchoring} and, for general $d\in \N$, showed  that  instead of choosing the first exceedance-point as the \textit{anchor}, one can choose any other exceedance-point of the cluster as long as this choice is made in a translation covariant way. The resulting process was called the \textit{anchored tail process}, \tcb{and its distribution is thus the distribution of a typical cluster of extremes of $\bX$; see (\ref{eq:anchored_process_cluster}).}

The main goal of this paper is to further clarify the relationship between the tail process and the anchored tail process (i.e.\ the typical cluster). In particular, we will establish a certain Palm-like duality which implies that, up to a random shift, the tail process is a \textit{size-biased} version of the typical cluster, where by \textit{size} we mean the number of exceedance-points in the cluster, see \Cref{prop:Palm1}. We now give an intuitive explanation of this phenomenon. 
Since the clusters are assumed i.i.d. (and with the expected number of exceedance-points being finite), 
the tail process is in fact the cluster containing and being centered around an exceedance-point which is chosen \textit{uniformly} among all of the exceedance-points of $\bX$; see (\ref{eq:randomized_origin})
for the precise meaning of this statement (in the case $d=1$).
The law of large numbers now implies that this cluster cannot be typical -- the probability that it contains exactly $k\in \N$ exceedance-points is proportional to $k$ times the probability of the same event for the typical cluster. Moreover, the chosen exceedance-point located at the origin is chosen uniformly over all of the exceedance-points in the cluster; this intuition is essentially borrowed from Thorisson~\cite[p.\ 70]{thorisson:2000}.  The following simple example should make this idea clear.


\begin{example}\label{exa:MA_intro}
Let $(Z_{i})_{i\in \Z}$ be an i.i.d.\ sequence of $\mathrm{Pareto}(\alpha)$-distributed random variables for some $\alpha>0$, and let $(\varepsilon_i)_{i\in \Z}$ be an another independent i.i.d.\ sequence of Beornoulli random variables such that $\pr(\varepsilon_0=0)=\pr(\varepsilon_0=1)=\tfrac12$. Define a stationary  moving average process $\bX=(X_i)_{i\in \Z}$ by
\begin{align}\label{eq:MA_intro}
X_i=Z_i+ \varepsilon_i Z_{i-1} \, , \, i\in \Z \, .
\end{align} 

\begin{figure}[!h]
\centering
\includegraphics[scale = 0.5]{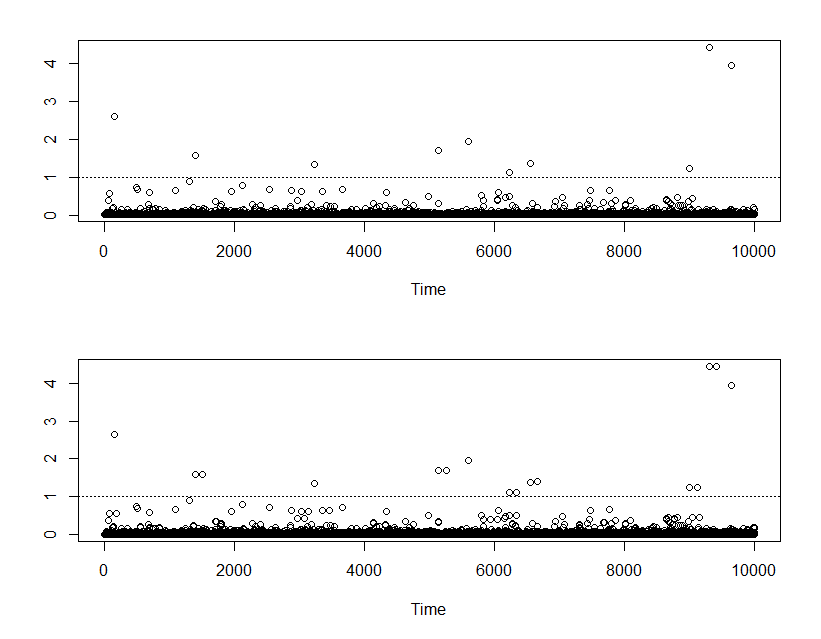}
\caption{One realization of the scaled i.i.d.\ sequence $Z_1/u,\dots, Z_n/u$ of $\mbox{Pareto}(\alpha)$-distributed random variables (top) and of the corresponding scaled moving average process $X_1/u,\dots, X_n/u$ from (\ref{eq:MA_intro}) (bottom) for $n=10^4$, $u=260$, and $\alpha=1.2\,$.}
\label{fig:MA_intro}
\end{figure}

The tail process $\bY=(Y_{i})_{i\in \Z}$  of $\bX$ exists (thus $\bX$ is regularly varying) and is given by  $Y_i=0$ for $|i|\geq 2$ and
\begin{align*}
(Y_{-1}, Y_0,Y_1) = \begin{cases}
(0,Y_0,0) \, , & \text{w.p.} \; \frac13 \\
(0,Y_0,Y_0) \, , & \text{w.p.} \; \frac13 \\
(Y_0,Y_0,0) \, , & \text{w.p.} \; \frac13 \, ,
\end{cases}
\end{align*}
where, recall, $Y_0=|Y_0|$ is $\mathrm{Pareto}(\alpha)$-distributed; see \Cref{exa:MA}. 

Inutitively, $Z_i$'s which exceed a high threshold come in isolation and each such exceedance generates  with equal probability (depending on the corresponding $\varepsilon_{i+1}$ being 0 or 1) one or two consecutive and asymptotically equal exceedances of $\bX$, see Figure \ref{fig:MA_intro} for an illustration. Thus, by choosing an exceedance of $\bX$ {uniformly at random} one is two times more likely to end up in a cluster with two consecutive exceedances. 
See also \Cref{exa:MA} below for more details on this model as well on a class of slightly more complicated models.
\demo
\end{example}

We note that the stated duality as well as all other results regarding the relationship between the tail process and the anchored tail process in \Cref{subs:anchor} are deduced  assuming only that the tail process $\bY=(Y_{\bi})_{\bi\in \Z^d}$ has finitely many exceedance-points, i.e.\ that the set $\{\bi : |Y_{\bi}|>1\}$ is a.s.\ finite;  due to the spectral decomposition this is equivalent to $\tlim_{|\bi|\to\infty}|Y_{\bi}|= 0$ a.s.\ 
where $|\bi|=\max_{k=1,\dots, d} |i_k|$ for $\bi=(i_1,\dots, i_d)\in \Z^d$. Thus, there are no additional assumptions on the underlying regularly varying random field. In particular, we establish a similar duality between the spectral tail process and a normalized version of the anchored process, see \Cref{prop:exc_stat_representation}, and use these two dualities to provide a new characterization of the class of so-called representatives of the anchored (spectral) tail process as well as of the class of all tail processes with finitely many exceedance-points, see \Cref{subs:palm_tail,subs:palm_spectral}  for details.  \tcb{In \Cref{sub:randomized_origin} we give formal meaning to statements which relate the tail and anchored tail process to clusters of extremes of $\bX$. In particular, we show that under the usual dependence assumptions, a \enquote{typical} cluster or exceedance-point is the same as a \enquote{randomly chosen} cluster or exceedance-point, see \Cref{prop:randomized_origin_cluster}.}

%
%
%

Finally, we apply our results on  a large and important family of regularly varying  time series and random fields, namely the class of infinite order moving average processes with random coefficients and regularly varying innovations, see \cite[Chapter 15]{kulik:soulier:2020}. In particular, we determine the distribution of the anchored tail process and provide an expression for the so-called candidate extremal index, both of which seem to be new, see \Cref{prop:MA_Q}.

The paper is organized as follows:  In \Cref{sec:exc_stat} we deal with the concept of exceedance-stationarity.  In \Cref{subs:anchor} we restrict our attention to tail processes with finitely many exceedance-points and prove our main results. In \Cref{sec:MA} we focus on moving average processes with random coefficients and regularly varying innovations. \Cref{sec:appendix} contains a couple of postponed proofs.

\begin{remark}
For notational convenience we only consider $\R$-valued random fields and time series. All the result easily extend to the general $\R^n$-valued case by replacing the absolute value $|\,\cdot \,|$ with an arbitrary norm on $\R^n$. \end{remark}


\section{Exceedance-stationarity of the tail process}
\label{sec:exc_stat}
Let the space $\R^{\Z^d}$ be equipped with the product $\sigma$-algebra, and for $\bk\in \Z^d$, let $\shift{\bk}: \R^{\Z^d} \to \R^{\Z^d}$ be the shift operator defined by
\begin{align}\label{eq:shift_operator}
\shift{\bk}	 \bx = (x_{\bi+\bk})_{\bi \in \Z^d} \, , \, \bx=(x_{\bi})_{\bi \in \Z^d} \in \R^{\Z^d} \, .
\end{align}
In words, $\shift{\bk}$ moves the origin to $\bk$. If $\bY=(Y_{\bi})_{\bi \in \Z^d}$ is the tail process of a stationary random field $\bX$, the stationarity of $\bX$ and convergence in (\ref{eq:conv_to_tail}) directly imply that 
\begin{equation}\label{eq:time-change1}
\ex\left[h(\shift{\bk}\bY)\ind{|Y_{\bk}|>1}\right]=\ex\left[h(\bY)\ind{|Y_{-\bk}|>1}\right] \, ,
\end{equation}
for every measurable function $h:\R^{\Z^d}\to \R_+$ and all $\bk\in \Z^d$;
see \cite[Lemma 3.3]{basrak:planinic:2020}. 


We start by characterizing the  property (\ref{eq:time-change1}) using the concept of exceedance-stationarity, an extreme value analogue of the concept of point-stationarity introduced by Thorisson~\cite[Chapter 9]{thorisson:2000}. Next we show that, under the spectral decomposition, (\ref{eq:time-change1}) holds if and only if the corresponding spectral tail process satisfies the so-called \enquote{time-change formula} given in (\ref{eq:time-change0}) below. Together, these two results give a new and arguably more probabilistic view on the \enquote{time-change formula} as well as on the  properties of tail processes.

\subsection{Bijective exceedance-maps}\label{sub:exc_stat}
In order to formally define the concept of exceedance-stationarity we first define the notion of a bijective exceedance-map. 
We will say that $\bk\in \Z^d$ is an \textit{exceedance-point} of $\bx=(x_{\bi})_{\bi \in \Z^d}\in \R^{\Z^d}$ if $|x_{\bk}|>1$. Let
$$\e(\bx)=\{\bk \in \Z^d : |x_{\bk}|>1\}\, ,$$ 
be the set of all exceedance-points of $\bx$, and set 
$$\R^{\Z^d}_0=\{\bx \in \R^{\Z^d}: \bo\in \e(\bx) \} \, .$$ 
Note that  $\pr(\bY \in\R^{\Z^d}_0)=1$ holds by definition for every tail process $\bY$. 

For a Borel subset $B\subseteq \R^{\Z^d}_0$, a measurable map $\tau : B \to \Z^d$ is called an \textit{exceedance-map} on $B$ if 
\begin{align*}
\tau(\bx)\in \e(\bx) 
\end{align*}
for all $\bx \in B$. An exceedance-map $\tau$ is called \textit{bijective} on $B$ if the associated map $\tau(\bx, \cdot \,)$ defined by
\begin{align}\label{eq:ass_exc_map}
\tau(\bx, \bk)= \bk + \tau(\shift{\bk}\bx) \, , \, \bk \in \e(\bx) \, ,
\end{align} 
is a bijection on $\e(\bx)$ for all $\bx \in B$. 
Note that we implicitly assume that the domain $B$ is invariant to shifting the origin to any of the exceedance-points, that is, $\bx\in B$ implies $\shift{\bk}\bx\in B$ for all $\bk\in\e(\bx)$.


\begin{example}\label{exa:nth_point_to_the_right}
Assume that $d=1$ and consider the space 
$$B=\{\bx\in \R^{\Z}_0 : \e(\bx)=\infty \text{ and } \inf \e(\bx)=-\infty\} \, . $$ 
\tcb{For every $\bx\in B$ and all $n\in \Z$, set 
\begin{align*}
\tau_{n}(\bx)=i_{n} \, , 
\end{align*} 
where $(i_m)_{m\in \Z}$ are all the exceedance-points of $\bx$ labeled such that $i_0=0$ and 
$$\cdots <i_{-1}<i_0<i_1<\cdots\, .$$}
In words, if $n\ge 1$, $\tau_n(\bx)$ and $\tau_{-n}(\bx)$ are the $n$th exceedance-points of $\bx$ to the right and left of 0, respectively. 
For all $k\in \e(\bx)$, $\tau_n(\bx,k)$ and $\tau_{-n}(\bx,k)$ are then the $n$th exceedance-points of $\bx$ to the right and left of $k$, respectively. Observe that $\tau_{0}(\bx,k)=k$ for all $k\in \e(\bx)$. Consequently, $\tau_n$ is a bijective exceedance-map on 
$B$
for all $n\in \Z$.
\demo
\end{example}


For any exceedance-map $\tau$ on $B\subseteq \R^{\Z^d}_0$ denote by $\shift{\tau}: B \to B$ the associated \textit{exceedance-shift} defined by
\begin{align}\label{eq:exc_shift}
\shift{\tau} \bx = \shift{\tau(\bx)} \bx , \;  \bx \in B \, ,
\end{align}
i.e., $\shift{\tau}$ shifts the origin to the exceedance-point chosen by $\tau$. It can be shown that an exceedance-map $\tau$ is bijective on $B$ if and only if $\shift{\tau}$ is a bijection on $B$ but we omit the details, cf.~\cite{heveling:thesis}. Finally, for a random element $\bY$ of $\R^{\Z^d}_0$, call an exceedance-map $\bY$-bijective if it is bijective on a set containing the support $\bY$.

\begin{definition}\label{def:point_stat}
A random element $\bY$ of $\R^{\Z^d}_0$ is \text{exceedance-stationary} if for every $\bY$-bijective exceedance-map $\tau$,
\begin{align}\label{eq:point_stat}
\shift{\tau} \bY \eind \bY \, \, \text{ in  } \R^{\Z^d}_0 \, ,
\end{align}
where $"\eind"$ denotes equality in distribution.  
\end{definition}

\begin{theorem}\label{thm:Y_is_point_stat}
A random element $\bY=(Y_{\bi})_{\bi \in \Z^d}$ of $\R^{\Z^d}_0$ is \text{exceedance-stationary} if and only if (\ref{eq:time-change1}) holds for every measurable $h:\R^{\Z^d}\to \R_+$ and all $\bk\in \Z^d$. In particular, tail processes of stationary random fields are exceedance-stationary.
\end{theorem}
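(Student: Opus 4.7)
The theorem has two components -- the equivalence and the ``in particular'' corollary -- and I plan to prove the equivalence first; the corollary then follows immediately, because (\ref{eq:time-change1}) was already shown to hold for every tail process in the discussion preceding this subsection (cf.\ \cite[Lemma 3.3]{basrak:planinic:2020}).

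For the direction ``(\ref{eq:time-change1}) $\Rightarrow$ exceedance-stationarity'', I would fix a bijective exceedance-map $\tau$ and write $\psi(\bx)=\tau(\bx,\cdot)$ for the associated bijection of $\e(\bx)$. A direct unpacking of (\ref{eq:ass_exc_map}) yields the shift-covariance identity $\psi(\shift{\bj}\bx)(\bi)=\psi(\bx)(\bi+\bj)-\bj$ for every $\bj\in\e(\bx)$ and $\bi\in\e(\shift{\bj}\bx)$, from which one sees at once that the pointwise inverse gives another bijective exceedance-map $\tau^{\ast}$ with associated bijection $\psi(\bx)^{-1}$. For measurable $h\geq 0$ I would partition $\ex[h(\shift{\tau}\bY)]$ over the value of $\tau(\bY)$, observing that on $\{\tau(\bY)=\bk\}$ one has $\shift{\tau}\bY=\shift{\bk}\bY$ and, by shift-covariance, $\tau^{\ast}(\shift{\bk}\bY)=-\bk$. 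Applying (\ref{eq:time-change1}) term-by-term with the test function $g_{\bk}(\bz):=h(\bz)\ind{\tau^{\ast}(\bz)=-\bk}$ and then summing should give
\[
\ex[h(\shift{\tau}\bY)] \;=\; \sum_{\bk \in \Z^d} \ex\!\left[h(\bY)\ind{\tau^{\ast}(\bY)=-\bk}\right] \;=\; \ex[h(\bY)],
\]
since $\tau^{\ast}(\bY)$ almost surely equals a single point of $\e(\bY)\subseteq\Z^d$; this yields $\shift{\tau}\bY \eind \bY$, as required.

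The harder converse, ``exceedance-stationarity $\Rightarrow$ (\ref{eq:time-change1})'', will reduce to the following construction: for each fixed $\bk\in\Z^d$, exhibit a bijective exceedance-map $\tau_\bk$ with the property $\tau_\bk(\bx)=\bk$ whenever $\bk\in\e(\bx)$. Given such a $\tau_\bk$, reading the computation of the previous paragraph backwards -- with the sole input $\shift{\tau_\bk}\bY\eind\bY$ -- produces (\ref{eq:time-change1}) directly. The main obstacle is the construction itself: the constraint $\tau_\bk(\bx)=\bk$ when $\bk\in\e(\bx)$ forces the associated bijection $\psi(\bx)$ to act as $\bk$-translation on every pair $\bj,\bj+\bk\in\e(\bx)$, and this translation alone is not a bijection of $\e(\bx)$ onto itself whenever a $\bk$-orbit inside $\e(\bx)$ contains a one-sided infinite connected component that is not offset by another of the opposite orientation. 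To fix this I would supplement the translation with a canonical, measurable, shift-covariant matching of the ``right-endpoints'' $\{\bj\in\e(\bx):\bj+\bk\notin\e(\bx)\}$ to the ``left-endpoints'' $\{\bj\in\e(\bx):\bj-\bk\notin\e(\bx)\}$: cyclic within each finite connected component, and, in the delicate half-unbounded case, via a stable-matching scheme in the spirit of the bijective point-shifts constructed in Heveling's thesis \cite{heveling:thesis}. Once $\tau_\bk$ is defined this way, shift-covariance and the identity $\tau_\bk(\bx)=\bk$ on the required set become routine verifications, and the proof is complete.
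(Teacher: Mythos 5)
Your proof of the direction ``(\ref{eq:time-change1}) $\Rightarrow$ exceedance-stationarity'' is correct and is essentially the paper's own argument: where you sum the indicators $\ind{\tau^{\ast}(\bY)=-\bk}$ of the inverse map, the paper sums $\ind{\tau(\bY,\bk')=\bo}$ over $\bk'\in\e(\bY)$ and invokes bijectivity of $\tau(\bY,\cdot\,)$; these are the same computation. The converse, however, has a genuine gap: the bijective exceedance-map $\tau_{\bk}$ with $\tau_{\bk}(\bx)=\bk$ whenever $\bk\in\e(\bx)$, on which your whole reduction rests, does not exist in general, and no matching scheme can repair this. As you observe, the constraint forces the associated map $\psi(\bx)=\tau_{\bk}(\bx,\cdot\,)$ to send $\bj\mapsto\bj+\bk$ for every $\bj$ with $\bj,\bj+\bk\in\e(\bx)$, so what remains is to biject the right-endpoints $R=\{\bj\in\e(\bx):\bj+\bk\notin\e(\bx)\}$ onto the left-endpoints $L=\{\bj\in\e(\bx):\bj-\bk\notin\e(\bx)\}$. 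But these sets can have different cardinalities: take $d=1$, $\bk=1$ and $\bx\in\R^{\Z}_0$ with $\e(\bx)=\{n\in\Z: n\le 0\}$; then $R=\{0\}$ while $L=\emptyset$ (equivalently, the forced translation $n\mapsto n+1$ already maps $\e(\bx)\setminus\{0\}$ onto all of $\e(\bx)$, leaving nowhere injective for $\psi(\bx)(0)$ to go). In general, on each line $\bj_0+\Z\bk$ the deficit $|R|-|L|$ equals the number of left-half-infinite $\bk$-runs minus the number of right-half-infinite ones, and nothing forces these to balance over $\e(\bx)$. Since the theorem concerns an arbitrary exceedance-stationary $\bY$, you cannot exclude such configurations from the support without circularity, so the ``delicate half-unbounded case'' is not merely delicate — it is an outright obstruction.

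The paper's proof of necessity (in the appendix, following Thorisson~\cite{thorisson:2007}) avoids the single-map requirement entirely. It partitions $\R^{\Z^d}_0$ according to whether $\e(\bx)\cap\Z\bk$ is two-sided infinite, finite, or one-sided infinite, and constructs a countable family of bijective exceedance-maps $(\tau_n)_{n\in\Z}$ — using the ``$n$th point along the line'' in the two-sided case, a cyclic order in the finite case, and a relabelling of the one-sided run via a fixed bijection $\pi:\N_0\to\Z$ in the remaining case — with the key combinatorial property that $\ind{|x_{\bk}|>1}$ equals $\ind{\tau_1(\bx)=\bk}$ on the first two pieces and $\sum_{n\in\Z}\ind{\tau_n(\bx)=\bk}$ on the third. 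Summing the identities $\shift{\tau_n}\bY\eind\bY$ over this family then yields (\ref{eq:time-change1}). If you wish to keep your framework, this is the modification you need: replace the single map $\tau_{\bk}$ by a countable family whose level sets $\{\tau_n(\bx)=\bk\}$ tile the event $\{|x_{\bk}|>1\}$.
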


%
%
%
%
%
%
%

Before the proof we give several remarks, and examples of bijective, as well as, non-bijective exceedance-maps. First, let $\abs{\e(\bx)}$ denote the cardinality of $\e(\bx)$ and let 
\begin{align*}
\espace=\{\bx\in \R_0^{\Z^d} : \abs{\e(\bx)}<\infty\} \, ,
\end{align*}
be the space of all elements of $\R_0^{\Z^d}$ with finitely many exceedance-points.

\begin{example}
\label{exa:finitely_many_exc}

\tcb{For every $\bx\in \espace$ and all $n\in \Z$, define $\tau_n(\bx)$ as follows.}
Denote $N=|\e(\bx)|$ and let $\bi_0,\dots, \bi_{N-1}$ be the elements of $\e(\bx)$ ordered lexicographically. Let $k\in \{0,\dots, N-1\}$ be such that $\bi_k=\bo$. Similarly as in \Cref{exa:nth_point_to_the_right}, set
\begin{align*}
\tau_n(\bx)=\bi_{k+n \; (\mathrm{mod}\; N)} \, .
\end{align*}
For all $\bx\in \espace$ and all $n\in \Z$ the associated map $\tau_n(\bx, \cdot\,)$ is clearly bijective on $\e(\bx)$. 
Observe that one can replace the lexicographic order with an arbitrary \textit{group} order on $\Z^d$, that is, a total order $\preceq$ which is translation invariant: $\bi \preceq \bj$ implies  $\bi+\bk \preceq \bj+\bk$ for all $\bi,\bj,\bk \in \Z^d$. 
\demo
\end{example}

\begin{remark}
In fact, if $\bY$ is a random element of $\espace$ (which is the case we are most interested in, see \Cref{subs:anchor}), it is enough to check (\ref{eq:point_stat}) for $\tau=\tau_n$ for all $n\in \Z$; this can be shown using similar arguments as in the proof of \Cref{thm:Y_is_point_stat} below. Since $\shift{\tau_n}\circ \shift{\tau_{m}}=\shift{\tau_{m+n}}$ for all $m,n\in \Z$, exceedance-stationarity of $\bY$ is thus equivalent to 
\begin{align*}
\shift{\tau_1} \bY \eind \bY \, .
\end{align*}
The same is true if $\sup \e(\bY)=\infty$ and $\inf \e(\bY)=-\infty$ a.s.\ with $\tau_1$ defined in \Cref{exa:nth_point_to_the_right}.
\demo
\end{remark}
\begin{example} 
Assume that $d=1$ and consider the space $B=\{\bx \in \R^{\Z}_0 : \sup \e(\bx)=\infty \, , \, \inf \e(\bx)>-\infty \}$. \tcb{For every $\bx\in B$ and all $n\in \N$, define $\tau_n(\bx)$ as follows.}  Let $i_0< i_1<i_2<\cdots \,$, be ordered elements of $\e(\bx)$ and set 
\begin{align*}
\tau_n(\bx)=i_{k+n} \, , \,
\end{align*}
where $k\in \N$ is such that  $i_k=0$. For all $\bx\in B$ and all $n\in \N$ the associated map $\tau_n(\bx, \cdot \,)$ will not be surjective on $\e(\bx)=\{i_0,i_1,\dots\}$ since $\tau_n(\bx, i_m)\neq i_0=\min \e(\bx)$ for all $m\geq 0$. See the proof of \Cref{thm:Y_is_point_stat} for a construction of a bijective exceedance-map on such $\bx$'s.
\demo
\end{example}

\begin{example}
For $\bx\in \R^{\Z^d}_0$ such that $\e(\bx)\neq \{\bo\}$, call $\bi\in \e(\bx)$ the \textit{nearest exceedance-point} of $\bk\in \e(\bx)$, if $\bi\neq \bk$ and $\|\bk-\bi\|=\min\{\|\bk-\bj\| : \bj\in \e(\bx)\setminus\{\bk\}\}$, where $\|\, \cdot \,\|$ is the Euclidean norm; if there are more such $\bi$'s, take the smallest w.r.t.\ the lexicographic order. In the trivial case $\e(\bx)= \{\bo\}$, let $\bo$ be its own nearest exceedance-point.

Now for all $\bx\in \R^{\Z^d}_0$ set $\tau(\bx)=\bi$ if $\bi$ is the nearest exceedance-point of $\bo$ and $\bo$ the nearest exceedance-point of $\bi$. In all other cases, set $\tau(\bx)=\bo$. The resulting exceedance-map $\tau$ is easily seen to be bijective on $\R^{\Z^d}$. Observe that by simply setting $\tau$ to always be the nearest exceedance-point of $\bo$ does not yield a bijective exceedance-map since if e.g.\ $\bx\in \R^{\Z^d}_0$ has a pair of exceedance-points with the same nearest exceedance-point, the associated map $\tau(\bx,\cdot\,)$ will not be injective. \demo
\end{example}

\begin{remark}\label{rem:mecke_tail}
For a random element $\bY=(Y_{\bi})_{\bi \in \Z^d}$ of $\R_0^{\Z^d}$, it is easily seen that the property (\ref{eq:time-change1}) holds for every measurable $h:\R^{\Z^d}\to \R_+$ and all $\bk\in \Z^d$ if and only if 
\begin{align}\label{eq:mecke}
\ex\big[\tsum_{\bk\in \e(\bY)} g(-\bk , \shift{\bk}\bY)\big] = \ex\big[\tsum_{\bk\in \e(\bY)} g(\bk , \bY)\big] \, 
\end{align}
holds for all measurable $g:\Z^d \times \R^{\Z^d}\to \R_+$, see \Cref{sec:appendix} for details. Equation (\ref{eq:mecke}) can be seen as an analogue of the so-called Mecke's integral equation, see e.g.~\cite[Equation (4.11)]{heveling:last:2005}, which is known to be equivalent to the point-stationarity property of a point process on $\R^d$, see \cite[Theorems 4.2.2 and 6.2.3]{heveling:thesis}. \demo

\end{remark}

\begin{remark}\label{rem:tail_is_Palm}
\tcb{Exceedance-stationarity of the tail process $\bY$ can be understood as a consequence of $\bY$ being an asymptotic Palm version of the underlying random field $\bX$ -- it is the asymptotic (local) distribution of $u^{-1}\bX$ as $u\toi$ conditionally on $u^{-1}\bX$ having an exceedance-point at the origin. Palm distribution are often introduced through the \textit{refined Campbell theorem}, see e.g.~\cite[Theorem 9.1]{last:penrose:2017}. In our context, a Campbell theorem could be stated as follows. }

\tcb{Let $f:[0,1]^d\times \R^{\Z^d} \to \R_+$ be an arbitrary measurable, continuous and bounded function; $\R^{\Z^d}$ is considered w.r.t.\ the product topology and corresponding Borel $\sigma$-algebra. Further, let $(c_n)_{n\in\N}$ be a sequence of positive real numbers such that as $n\toi$, $c_n\toi$ and
\begin{align*}
\ex\Big[\tsum_{\bi\in \{1,\dots, n\}^d} \1{\{|X_{\bi}|> c_n\}}\Big] = n^d\pr(|X_{\bo}|>c_n) \to \tau \in (0,\infty) \, ,
\end{align*}
i.e., $\tau$ is the asymptotic expected number of exceedance-points of $c_n^{-1}\bX$ in $\{1,\dots, n\}^d$.
Then as $n\toi$, stationarity of $\bX$ and the definition of the tail process in (\ref{eq:conv_to_tail}) imply
\begin{align}\label{eq:campbell_tail}
\ex\Big[\tsum_{\bk \in \{1,\dots, n\}^d} f(\bk/n, \shift{\bk}\bX / c_n) \1{\{|X_{\bk}|>c_n\}}\Big] \to \tau \int_{[0,1]^d} \ex\big[f(\bt, \bY) \big] \dx \bt \, .
\end{align}
The proof of (\ref{eq:campbell_tail}) can be found in \Cref{sec:appendix}. \demo}
\end{remark}

\begin{proof}[Proof of \Cref{thm:Y_is_point_stat}]

We first prove sufficiency. Let $\tau$ be a bijective exceedance-map on $B$ and assume that (\ref{eq:time-change1}) holds. 
Observe that, by construction, the mapping $\tau(\, \cdot \, , \cdot \,)$ satisfies the covariance property 
\begin{align}\label{eq:covar_property_tau}
\tau(\shift{\bk}\bx,\bk'-\bk) = \tau(\bx, \bk')- \bk \, , \;\; \bx\in B, \, \bk,\bk'\in \e(\bx) \, .
\end{align}

Take an arbitrary measurable function $h : \R^{\Z^d}_0 \to \R_+$. Then,
\begin{align*}
\ex[h(\shift{\tau}\bY)] &= \sum_{\bk \in \Z^d} \ex[h(\shift{\bk}\bY) \ind{\tau(\bY)=\bk}] && \\
&=  \sum_{\bk \in \Z^d} \ex[h(\shift{\bk}\bY) \ind{\tau(\bY, \bo)=\bk, |Y_{\bk}|>1}] && \text{since necessarily} \; \bk\in\e(\bY)  \\
& = \sum_{\bk \in \Z^d} \ex[h(\shift{\bk}\bY) \ind{\tau(\shift{\bk}\bY,-\bk)=\bo, |Y_{\bk}|>1}] && \text{use} \; (\ref{eq:covar_property_tau})  \\
& = \sum_{\bk \in \Z^d} \ex[h(\bY) \ind{\tau(\bY,-\bk)=\bo, |Y_{-\bk}|>1}] && \text{use} \; (\ref{eq:time-change1})  \\
& =  \ex\Big[h(\bY) \cdot \sum_{\bk' \in \Z^d}\ind{\tau(\bY,\bk')=\bo, |Y_{\bk'}|>1}\Big] && \bk'=-\bk   \\
&  =  \ex\Big[h(\bY) \cdot \sum_{\bk' \in \e(\bY)}\ind{\tau(\bY,\bk')=\bo}\Big] &&   \\ 
& = \ex[h(\bY) \cdot 1 ]= \ex[h(\bY)] && \text{since }  \tau(\bY, \cdot \,) \text{ is a bijection} \, . 
\end{align*}

The proof of necessity is similar to the proof of \cite[Theorem 4.1]{thorisson:2007} and postponed to \Cref{sec:appendix}.

\end{proof}

\subsection{Another view on the \enquote{time-change formula}}
If $\bY=(Y_{\bi})_{\bi\in \Z^d}$ is a tail process, we define its \textit{spectral (tail) process} $\bTheta=(\Theta_{\bi})_{\bi \in \Z^d}$ by
$$\Theta_{\bi}=|Y_{\bo}|^{-1} Y_{\bi} \, , \, \bi \in \Z^d \, .$$
Observe that always $|\Theta_{\bo}|=1$. It is known that $|Y_{\bo}|$ is necessarily $\mathrm{Pareto}(\alpha)$-distributed for some $\alpha>0$, and independent of $\bTheta$. Moreover, the spectral process satisfies the following invariance property:
 \begin{equation}\label{eq:time-change0}
\ex[h\left(|\Theta_{\bk}|^{-1} \shift{\bk}\bTheta \right) \ind{\Theta_{\bk}\neq 0}]=\ex\left[h\left(\bTheta\right)|\Theta_{-\bk}|^{\alpha}\right] \, ,
\end{equation}
for all measurable $h: \R^{\Z^d}\to \R_{+}$ and all $\bk \in \Z^d$.
This property is  usually stated in the slightly different, but equivalent, form:
\begin{equation}\label{eq:time_change_original}
\ex\left[h\left(\shift{-\bk}\bTheta\right)\ind{\Theta_{-\bk}\neq 0}\right]=\ex[h\left(|\Theta_{\bk}|^{-1} \bTheta \right) |\Theta_{\bk}|^{\alpha}] \, ,
\end{equation}
for all measurable $h:\R^{\Z^d}\to \R_{+}, \bk \in \Z$; the expression under the expectation on the left-hand side is understood to be 0 if $|\Theta_{\bk}|=0$, see e.g.~\cite[Theorem 3.1(iii)]{basrak:segers:2009}. In the case $d=1$, (\ref{eq:time_change_original}) is often called the \enquote{time-change formula}.

Property  (\ref{eq:time-change0}) characterizes the class of all tail processes in the sense that if $\bTheta=(\Theta_{\bi})_{\bi \in \Z^d}$ is a process satisfying  $|\Theta_{\bo}|=1$ and (\ref{eq:time-change0}) for some $\alpha>0$, then there exists a stationary  random field which is regularly varying with index $\alpha$ and whose spectral tail process is exactly $\bTheta$. Jan{\ss}en~\cite[Theorem 4.2]{janssen:2019} and Dombry et al.~\cite[Theorems 2.9 and 3.7]{dombry:hashorva:soulier:2018} (independently)  proved this  for the case $d=1$, and Hashorva~\cite[Theorem 2.3]{hashorva:2020} extended it to the case $d\geq 2$.

We give an another interpretation of the property (\ref{eq:time-change0}): under the spectral decomposition, it is equivalent to the exceedance-stationarity of the tail process. 

\begin{proposition}\label{prop:TCF_another_view}
Let $\bTheta=(\Theta_{\bi})_{\bi\in \Z^d}$ be a random element of $\R^{\Z^d}$ such that $|\Theta_{\bo}|=1$ a.s.\ and $Y$ a $\mathrm{Pareto}(\alpha)$-distributed random variable for some $\alpha>0$, and assume that $\bTheta$ and $Y$ are independent. Then (\ref{eq:time-change0}) holds for all measurable $h: \R^{\Z^d}\to \R_{+}$ and all $\bk \in \Z^d$ if and only if  $\bY:=Y\cdot \bTheta$ is exceedance-stationary.
\end{proposition}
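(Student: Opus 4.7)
The plan is to first invoke \Cref{thm:Y_is_point_stat}, which reduces exceedance-stationarity of $\bY = Y\bTheta$ to the identity (\ref{eq:time-change1}) holding for every measurable $h: \R^{\Z^d}\to \R_+$ and every $\bk\in\Z^d$. It then remains to show that, under $\bY = Y\bTheta$ with $Y\sim\mathrm{Pareto}(\alpha)$ independent of $\bTheta$ and $|\Theta_{\bo}|=1$, the identities (\ref{eq:time-change1}) and (\ref{eq:time-change0}) are equivalent. The single computational fact driving both directions is this: on $\{\Theta_{\bk}\neq 0\}$, the normalized process $\bTheta':=|\Theta_{\bk}|^{-1}\shift{\bk}\bTheta$ satisfies $|\Theta'_{\bo}|=1$ \emph{and} $|\Theta'_{-\bk}|=|\Theta_{\bk}|^{-1}$; in particular, the radial component $|\Theta_{\bk}|$ is recoverable from $\bTheta'$ alone as $|\Theta'_{-\bk}|^{-1}$.

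For the direction (\ref{eq:time-change0})$\Rightarrow$(\ref{eq:time-change1}) I would fix $h$ and $\bk$ and apply (\ref{eq:time-change0}) to the auxiliary test function
\[
\tilde h(\bz):=|z_{-\bk}|^{-\alpha}\int_{\max(1,\,|z_{-\bk}|^{-1})}^{\infty} h(z\bz)\,\alpha z^{-\alpha-1}\,\mathrm{d}z,
\]
extended by zero on $\{z_{-\bk}=0\}$. Using $|\Theta'_{-\bk}|^{-\alpha}=|\Theta_{\bk}|^\alpha$, the LHS of (\ref{eq:time-change0}) becomes $\ex\bigl[|\Theta_{\bk}|^\alpha\int_{\max(1,|\Theta_{\bk}|)}^\infty h(z|\Theta_{\bk}|^{-1}\shift{\bk}\bTheta)\,\alpha z^{-\alpha-1}\,\mathrm{d}z\,\ind{\Theta_{\bk}\neq 0}\bigr]$, which the change of variable $z=y|\Theta_{\bk}|$ identifies with the LHS of (\ref{eq:time-change1}). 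On the RHS of (\ref{eq:time-change0}), the factor $|z_{-\bk}|^{-\alpha}$ in $\tilde h$ cancels with the weight $|\Theta_{-\bk}|^\alpha$, and since $Y\geq 1$ a.s.\ the integration bound $\max(1,|\Theta_{-\bk}|^{-1})$ translates into the indicator $\{Y|\Theta_{-\bk}|>1\}$, giving exactly the RHS of (\ref{eq:time-change1}).

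For the converse I would substitute the product test function $h(\bz)=h_0(\bz/|z_{\bo}|)\phi(|z_{\bo}|)$ in (\ref{eq:time-change1}) and integrate out $Y$; writing $\Phi(t):=\int_t^\infty\phi(z)\alpha z^{-\alpha-1}\,\mathrm{d}z$, this reduces (\ref{eq:time-change1}) to
\[
\ex\bigl[|\Theta_{\bk}|^\alpha h_0(\bTheta')\Phi(\max(1,|\Theta_{\bk}|))\ind{\Theta_{\bk}\neq 0}\bigr]=\ex\bigl[h_0(\bTheta)\Phi(\max(1,|\Theta_{-\bk}|^{-1}))\ind{\Theta_{-\bk}\neq 0}\bigr].
\]
Specializing $\phi(z)=\ind{z>y_0}$ and multiplying both sides by $y_0^\alpha$, the two $\Phi$-factors become $\min(y_0,|\Theta_{\bk}|)^\alpha$ and $\min(y_0|\Theta_{-\bk}|,1)^\alpha$, respectively. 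Letting $y_0\to\infty$ and invoking monotone convergence on each side gives
\[
\ex\bigl[h_0(\bTheta')\,|\Theta_{\bk}|^\alpha\ind{\Theta_{\bk}\neq 0}\bigr]=\ex\bigl[h_0(\bTheta)\ind{\Theta_{-\bk}\neq 0}\bigr].
\]
Substituting $h_0(\bz)=h(\bz)|z_{-\bk}|^\alpha$ and using once more $|\Theta'_{-\bk}|^\alpha=|\Theta_{\bk}|^{-\alpha}$ then recovers (\ref{eq:time-change0}).

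The hard part will be the meticulous bookkeeping of the changes of variables, in particular unifying the cases $|\Theta_{\bk}|\leq 1$ and $|\Theta_{\bk}|>1$ (which the $\max(1,|\Theta_{\bk}|)$ cutoff handles uniformly) and making sure the auxiliary test functions are well-defined by setting them to zero on the null set $\{z_{-\bk}=0\}$; the monotone convergence step is routine since both integrands are nonnegative and monotone in $y_0$.
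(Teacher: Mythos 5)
Your proposal is correct, but it takes a genuinely different route from the paper in one of the two directions. For the implication \enquote{(\ref{eq:time-change0}) $\Rightarrow$ exceedance-stationarity of $\bY$}, the paper does not compute at all: it invokes the external characterization theorems (Jan{\ss}en, Dombry--Hashorva--Soulier, Hashorva) asserting that any $\bTheta$ satisfying (\ref{eq:time-change0}) is the spectral tail process of \emph{some} stationary regularly varying field, so that $\bY$ is a genuine tail process and hence exceedance-stationary by \Cref{thm:Y_is_point_stat}. You instead prove (\ref{eq:time-change0}) $\Rightarrow$ (\ref{eq:time-change1}) directly, by applying (\ref{eq:time-change0}) to the auxiliary test function $\tilde h(\bz)=|z_{-\bk}|^{-\alpha}\int_{\max(1,|z_{-\bk}|^{-1})}^{\infty}h(z\bz)\,\alpha z^{-\alpha-1}\,\mathrm{d}z$ and exploiting the identity $|\Theta'_{-\bk}|=|\Theta_{\bk}|^{-1}$ (which holds precisely because $|\Theta_{\bo}|=1$); I checked the two changes of variables and they close up exactly as you claim. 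This is more self-contained -- it does not rest on the nontrivial existence constructions -- and it isolates the purely computational equivalence of the two formulas under the spectral decomposition, which is arguably the cleaner statement. For the converse direction your argument is essentially the paper's in a different parametrization: your truncation $\phi(z)=\ind{z>y_0}$ followed by $y_0\to\infty$ with monotone convergence corresponds to the paper's intermediate identity (\ref{eq:TCF_inter1}) with $s=1/y_0$ followed by $s\to 0$. Two cosmetic points: when you say \enquote{the two $\Phi$-factors become $\min(y_0,|\Theta_{\bk}|)^{\alpha}$ and $\min(y_0|\Theta_{-\bk}|,1)^{\alpha}$}, the first expression already absorbs the weight $|\Theta_{\bk}|^{\alpha}$ sitting outside $\Phi$ (the $\Phi$-factor alone times $y_0^{\alpha}$ is $\min(y_0/|\Theta_{\bk}|,1)^{\alpha}$); and the product test function $h_0(\bz/|z_{\bo}|)\phi(|z_{\bo}|)$ should be declared zero on $\{z_{\bo}=0\}$, which is harmless since $(\shift{\bk}\bY)_{\bo}\neq 0$ on $\{|Y_{\bk}|>1\}$ and $|Y_{\bo}|=Y>0$ a.s. Neither affects the validity of the displayed limit identity or the final substitution $h_0(\bz)=h(\bz)|z_{-\bk}|^{\alpha}$.
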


\begin{corollary}\label{cor:tail_pr_charact}
Let $\bY=(Y_{\bi})_{\bi\in \Z^d}$ be a random element of $\R^{\Z^d}_0$. Then $\bY$ is the tail process of some stationary regularly varying random field with tail index $\alpha>0$ if and only if $\bY$ is exceedance-stationary and $|Y_{\bo}|$ is $\mathrm{Pareto}(\alpha)$-distributed and independent of $\bTheta:=\bY/|Y_{\bo}|$.
\end{corollary}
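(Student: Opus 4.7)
The plan is to reduce the corollary to the combination of three ingredients that are already in place: Theorem \ref{thm:Y_is_point_stat} (which equates exceedance-stationarity with the invariance property (\ref{eq:time-change1})), Proposition \ref{prop:TCF_another_view} (which, under the spectral decomposition, turns (\ref{eq:time-change1}) into the time-change formula (\ref{eq:time-change0})), and the characterization theorem of Jan{\ss}en, Dombry et al., and Hashorva (which asserts that a process $\bTheta$ with $|\Theta_{\bo}|=1$ satisfying (\ref{eq:time-change0}) arises as the spectral tail process of some stationary regularly varying random field of index $\alpha$). With these, the equivalence is almost a bookkeeping exercise.

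For necessity, assume $\bY$ is the tail process of some stationary regularly varying random field $\bX$ with index $\alpha>0$. The spectral decomposition recalled in the introduction gives at once that $|Y_{\bo}|\sim \mathrm{Pareto}(\alpha)$ and is independent of $\bTheta=\bY/|Y_{\bo}|$. Moreover, (\ref{eq:time-change1}) holds for $\bY$ by the stationarity of $\bX$ combined with (\ref{eq:conv_to_tail}) (as recorded at the beginning of Section \ref{sec:exc_stat} via \cite[Lemma 3.3]{basrak:planinic:2020}), so Theorem \ref{thm:Y_is_point_stat} yields exceedance-stationarity of $\bY$.

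For sufficiency, assume $\bY$ is exceedance-stationary and that $Y_0:=|Y_{\bo}|$ is $\mathrm{Pareto}(\alpha)$-distributed and independent of $\bTheta:=\bY/Y_0$. By Theorem \ref{thm:Y_is_point_stat}, $\bY=Y_0\cdot \bTheta$ satisfies (\ref{eq:time-change1}), and then Proposition \ref{prop:TCF_another_view} (applied in the reverse direction) yields the time-change formula (\ref{eq:time-change0}) for $\bTheta$. Since also $|\Theta_{\bo}|=1$ a.s., the characterization theorem of \cite{janssen:2019, dombry:hashorva:soulier:2018, hashorva:2020} produces a stationary random field $\bX$, regularly varying with index $\alpha$, whose spectral tail process equals $\bTheta$ in distribution. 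Because the tail process is uniquely determined by the joint law of its modulus at the origin (which is always $\mathrm{Pareto}(\alpha)$) and the independent spectral process $\bTheta$, the tail process of this $\bX$ coincides in distribution with $\bY$.

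There is no real obstacle to overcome beyond checking that no hypothesis of the cited characterization theorem is missed; the only mildly delicate point is to note explicitly that once $\bTheta$ is matched, the independence of $|Y_{\bo}|$ and $\bTheta$ together with the Pareto($\alpha$) law of $|Y_{\bo}|$ (which is automatic for any tail process) pins down the full distribution of $\bY$, so the construction indeed recovers $\bY$ and not merely its spectral part.
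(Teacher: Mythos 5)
Your proposal is correct and follows essentially the same route as the paper, which presents this corollary as an immediate consequence of Proposition~\ref{prop:TCF_another_view} combined with Theorem~\ref{thm:Y_is_point_stat} and the cited characterization results of Jan{\ss}en, Dombry~et~al.\ and Hashorva. Your closing remark that the Pareto$(\alpha)$ modulus independent of $\bTheta$ pins down the full law of $\bY$ once the spectral process is matched is exactly the (implicit) final step of the paper's argument.
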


\begin{proof}[Proof of \Cref{prop:TCF_another_view}]
If (\ref{eq:time-change0}) holds, by the previous discussion $\bY$ is necessarily the tail process of some stationary regularly varying field and thus exceedance-stationary. Hence, it only remains to prove sufficiency.

Assume that $\bY=Y\cdot \bTheta$ is exceedance-stationary, or equivalently, that  (\ref{eq:time-change1}) holds for all $h: \R^{\Z^d}\to \R_{+}$ and all $\bk \in \Z^d$. 
We will show that this implies 
\begin{align}\label{eq:TCF_inter1}
\ex[h(\shift{\bk}\bY)\ind{|Y_{\bk}|>s}]=s^{-\alpha} \ex[h(s\bY)\ind{|Y_{-\bk}|>1/s}] \, , \,
\end{align}
for all $s\in (0,1]$ and all measurable $h: \R^{\Z^d}\to \R_{+}$ and all $\bk \in \Z^d$. Indeed, 
\begin{align*}
\ex[h(\shift{\bk}\bY)\ind{|Y_{\bk}|>s}] &= \ex\Big[ \int_{1}^{\infty} h(y \shift{\bk}\bTheta)\ind{y|\Theta_{\bk}|>s} \alpha y ^{-\alpha-1} \mathrm{d}y \Big] && \\
&=s^{-\alpha} \ex\Big[ \int_{1/s}^{\infty} h(sz \shift{\bk}\bTheta)\ind{z|\Theta_{\bk}|>1} \alpha z ^{-\alpha-1} \mathrm{d}z \Big] && (z=y/s) \\
&= s^{-\alpha} \ex[h(s\shift{\bk}\bY)\ind{|Y_{\bk}|>1, |Y_{0}|>1/s}] && (1/s \geq 1)\\
&= s^{-\alpha} \ex[h(s\bY)\ind{|Y_{-\bk}|>1, |Y_{-\bk}|>1/s}] && \text{use } (\ref{eq:time-change1}) \\
&= s^{-\alpha} \ex[h(s\bY)\ind{|Y_{-\bk}|>1/s}] \, . && 
\end{align*}

Next, observe that by monotone convergence \begin{align*}
\ex[h\left(|\Theta_{\bk}|^{-1} \shift{\bk}\bTheta \right) \ind{\Theta_{\bk}\neq 0}] =\lim_{s\to 0} \ex[\tilde{h}\left(\shift{\bk}\bY \right) \ind{|Y_{\bk}|> s}] \, , 
\end{align*}
where $\tilde{h}(\bx)=h(|x_{\bo}|^{-1} \bx)$, $\bx\in \R^{\Z^d}_0$. Now applying (\ref{eq:TCF_inter1}) to $\tilde{h}$ and using the spectral decomposition of $\bY$ one can easily show that (\ref{eq:time-change0}) holds;
the calculation is the same as in the proof of \cite[Theorem 5.3.1]{kulik:soulier:2020}, but for the convenience of the reader given in \Cref{sec:appendix}.  

\end{proof}

\begin{remark}
As in \Cref{rem:mecke_tail}, it is easily shown that (\ref{eq:time-change0}) for all measurable $h:\R^{\Z^d}\to \R_+$ is equivalent to
\begin{align*}
\ex\Big[\sum_{\bk \in \Z^d} g\left(-\bk, |\Theta_{\bk}|^{-1} \shift{\bk}\bTheta \right) \ind{\Theta_{\bk}\neq 0} \Big] = \ex\Big[\sum_{\bk \in \Z^d} g\left(\bk, \bTheta\right)|\Theta_{\bk}|^{\alpha}\Big] \, , 
\end{align*}
for all measurable $g:\Z^d \times \R^{\Z^d}\to \R_+$, which can be seen as the spectral tail process analogue of the \enquote{Mecke's integral equation}.
\end{remark}

\section{Palm dualities}\label{subs:anchor}

Let $\bY=(Y_{\bi})_{\bi\in\Z^d}$ be the tail process of an $\R$-valued, stationary and regularly varying random field $\bX=(X_{\bi})_{\bi \in \Z^d}$. In this section we restrict to the case when $\bY$ a.s.\ has finitely many exceedance-points, that is $\bi$'s such that $|Y_{\bi}|>1$. We start by defining the anchored tail process introduced in \cite{basrak:planinic:2020} which, under an appropriate condition, can be seen as the {typical} cluster of extremes of the underlying random field $\bX$, see \Cref{sub:randomized_origin}, and appears in limit theorems which concern the extremal behavior of $\bX$, see e.g.~\cite[Theorem 3.9]{basrak:planinic:2020}.  Using the exceedance-stationarity property of $\bY$, that is property (\ref{eq:time-change1}), we then establish certain Palm-like dualities between the (spectral) tail process and the anchored (spectral) tail process, and use them to give a new characterization of the class of so-called representatives of the anchored (spectral) tail process. Moreover, using these dualities and the concept of exceedance-stationarity we give a new characterization of the class of tail processes with finitely many exceedance-points. In the end of the section, we give a formal meaning to the intuitive idea of choosing an (asymptotic) exceedance-point of $\bX$ \enquote{uniformly at random}.


\subsection{Anchored tail process}
\label{sub:AnchoredTailProcess}
Recall that $\e(\bx)$, for $\bx\in \R^{\Z^d}$, denotes the set of all exceedance-points of $\bx$, and that  $\espace$
consists of all elements of $\R_0^{\Z^d}$ having finitely many exceedance-points.
Further, call a function $h$ on $B\subseteq \R^{\Z^d}$ \textit{shift-invariant} on $B$ if $h(\shift{\bk}\bx)=h(\bx)$ for all $\bx\in B$ and all $\bk\in \Z^d$.

We will say that a measurable function $A:\espace \to \Z^d$ is an \textit{anchoring function} if it is a {shift-covariant} exceedance-map, that is, for all $\bx\in \espace$
\begin{enumerate}[(i)]
\item $A(\bx)\in \e(\bx)$; 
\item $A(\shift{\bk}\bx)+\bk=A(\bx)$ for all $\bk\in \e(\bx)$,
\end{enumerate} 
where, recall, the shift operator $\shift{\bk}$, $\bk \in \Z^d$, defined in (\ref{eq:shift_operator}) moves the origin to $\bk$.

We will call $A(\bx)$ the \textit{anchoring exceedance-point} or simply the \textit{anchor} of $\bx$ (with respect to $A$). 
Standard examples of anchoring functions are 
\begin{itemize}
\renewcommand\labelitemi{--}
\item \textit{first exceedance}: $A^{fe}(\bx)=\min \e(\bx)$,
\item \textit{first maximum}: $A^{fm}(\bx)=\min \{\bk\in \Z^d : |x_{\bk}|=\max_{\bi \in \Z^d} |x_{\bi}|\}$, 
\end{itemize}
where $\min$ is taken with respect to an arbitrary, but fixed, group order on $\Z^d$.

Since for any anchoring map $A$ the associated map $A(\bx,\cdot\,)$ from (\ref{eq:ass_exc_map}) is constant on $\e(\bx)$ (equal to $A(\bx)$) for all $\bx\in \espace$, anchoring maps can be seen as the opposite of bijective exceedance-maps. Moreover, the induced shift $\shift{A}$ from (\ref{eq:exc_shift}), which moves the origin to the anchor, satisfies
\begin{align}\label{eq:SI_of_anchorShift}
\shift{A} \shift{\bk}\bx = \shift{A} \bx \, , \; \bx\in \espace, \, \bk\in \e(\bx) \, . 
\end{align}

Note that any anchoring function $A$ (defined as a function on $\espace$) can and will be extended to the space 
$$\mathbb{E}=\{\bx\in \R^{\Z^d} : 1\leq |\e(\bx)| <\infty\}$$ 
by setting $A(\bx):=A(\shift{\bk}\bx)+\bk$ for $\bx\in \mathbb{E}\setminus \espace$ where $\bk$ is an arbitrary exceedance-point of $\bx$. \tcb{Due to (ii) above, this does not depend on the choice of $\bk$. Moreover, it is easily checked that then $A:\mathbb{E}\to\Z^d$ satisfies properties (i),(ii) and (\ref{eq:SI_of_anchorShift}) above for all $\bx\in \mathbb{E}$ and all $\bk\in \Z^d$.} In particular, $\shift{A}$ is shift-invariant on $\mathbb{E}$.

\begin{definition}
Let $\bY$ be the tail process of a stationary random field such that $\pr(\bY\in \espace)=1$, and $A$ an arbitrary anchoring function. Denote by $\bZ^A=(Z_{\bi}^A)_{\bi \in \Z^d}$ a random element of $\espace$ with distribution
\begin{align*}
\pr(\bZ^A\in \cdot \, ) = \pr(\bY\in \cdot\, \mid A(\bY)=\bo) \, ,
\end{align*}
and call it the \textbf{anchored process} of $\bY$ (with respect to $A$) or simply the {anchored tail process}. 
\end{definition}
This is always well-defined since $\vartheta_A:=\pr(A(\bY)=0)>0$ for every $A$ as a consequence of exceedance-stationarity of the tail process; see \cite[Lemma 3.4]{basrak:planinic:2020}. 

As in the informal discussion preceding \Cref{exa:MA_intro}, assume that high threshold exceedances of the underlying random field $\bX$ group into i.i.d.\ clusters, and think of the tail process $\bY$ as the cluster which contains and is centered around the uniformly chosen exceedance-point of $\bX$. Conditioning $\bY$ on having the anchor at the origin, i.e.\ that $A(\bY)=\bo$, corresponds to choosing uniformly only among the exceedance-points which are anchors of their respective cluster. Since, for every fixed anchoring function $A$, each cluster contains exactly one exceedance-point which is its anchor, i.e.\ for all $\bx\in \espace$,
\begin{align*}
\sum_{\bk\in \e(\bx)} \ind{A(\shift{\bk}\bx)=\bo} = \sum_{\bk\in \e(\bx)} \ind{A(\bx)=\bk} = 1 \, ,
\end{align*}
this is (up to a shift) equivalent to choosing one of the clusters uniformly at random. This heuristically explains why the anchored tail process represents (up to a shift) the distribution of a typical cluster of extremes of $\bX$, see \Cref{subs:randomly_choosing} below. Similar reasoning yields that $\vartheta_A^{-1}=\pr(A(\bY)=0)^{-1}$ should equal the expected number of exceedance-points in a typical cluster of extremes (i.e.\ that (\ref{eq:theta_is_reciprocal}) below holds), and consequently why $\vartheta$ turns out to be the extremal index of the field $\bX$, see~\cite[Remark 3.11]{basrak:planinic:2020} for the precise statement.

\subsection{Palm duality for the tail process}\label{subs:palm_tail}
We are now ready to prove the basic duality between the tail and the anchored tail process. Note again that we prove our results without any assumption on the dependence of the underlying regularly varying random field -- we only assume that its tail process a.s.\ has  finitely many exceedance-points. 

\begin{proposition}\label{prop:Palm1}
Assume $\bY$ is a tail process such that $\pr(\bY\in \espace)=1$ and let $A$ be an arbitrary anchoring function. Then for every measurable $h:\espace \to [0,\infty)$,
\begin{align}\label{eq:palm_basic}
\ex[h(\bY)]= \vartheta_A \tsum_{\bk \in \Z^d} \ex\left[h(\shift{\bk}\bZ^A)\ind{|Z^A_{\bk}|>1}\right] = \vartheta_A \ex\left[ \tsum_{\bk \in \e(\bZ^A)} h(\shift{\bk}\bZ^A)\right] \, ,
\end{align}
and
\begin{align}\label{eq:palm_variant2}
\ex\left[h(\bZ^A)\right]= \vartheta_A^{-1} \ex\left[\frac{h(\shift{A}\bY)}{\abs{\e(\bY)}}\right] \, .
\end{align}
\end{proposition}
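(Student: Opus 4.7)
My approach is to derive both identities from the Mecke-type integral equation (\ref{eq:mecke}), which by \Cref{rem:mecke_tail} is equivalent to the exceedance-stationarity property (\ref{eq:time-change1}) of the tail process and thus holds for $\bY$. Throughout, I will use the shift-covariance $A(\shift{\bk}\bx)=A(\bx)-\bk$ for $\bx\in\mathbb{E}$, $\bk\in\Z^d$, and the fact that $A(\bx)$ is always an exceedance-point of $\bx$, so that $\sum_{\bk\in\e(\bx)}\ind{A(\bx)=\bk}=1$.

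For the first identity (\ref{eq:palm_basic}), I will apply (\ref{eq:mecke}) with the integrand
\[
g(\bk,\bx)=h(\bx)\,\ind{A(\bx)=\bk}.
\]
On the right-hand side of (\ref{eq:mecke}) the sum collapses to $h(\bY)$ because $A(\bY)\in\e(\bY)$ a.s. On the left-hand side, the covariance of $A$ gives $\ind{A(\shift{\bk}\bY)=-\bk}=\ind{A(\bY)=\bo}$, so that
\[
\ex[h(\bY)]=\ex\Big[\ind{A(\bY)=\bo}\tsum_{\bk\in\e(\bY)}h(\shift{\bk}\bY)\Big].
\]
Pulling out $\vartheta_A=\pr(A(\bY)=\bo)$ and recognising the conditional distribution of $\bY$ given $\{A(\bY)=\bo\}$ as the law of $\bZ^A$ yields the second equality in (\ref{eq:palm_basic}); rewriting the sum over $\e(\bZ^A)$ as a sum over $\Z^d$ with the indicator $\ind{|Z^A_{\bk}|>1}$ gives the first equality.

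For the second identity (\ref{eq:palm_variant2}), I will feed the function
\[
\tilde h(\bx)=\frac{h(\shift{A}\bx)}{|\e(\bx)|}
\]
into the version of (\ref{eq:palm_basic}) just proved. Since $\shift{A}$ and $|\e(\cdot)|$ are both shift-invariant on $\mathbb{E}$ (the former by (\ref{eq:SI_of_anchorShift})), each summand on the right equals $h(\shift{A}\bZ^A)/|\e(\bZ^A)|$, and the sum over the $|\e(\bZ^A)|$ exceedance-points of $\bZ^A$ cancels the denominator. Finally, since $A(\bZ^A)=\bo$ a.s., $\shift{A}\bZ^A=\bZ^A$ a.s., so the right-hand side reduces to $\vartheta_A\,\ex[h(\bZ^A)]$, while the left-hand side is $\ex[h(\shift{A}\bY)/|\e(\bY)|]$; rearranging gives (\ref{eq:palm_variant2}).

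The only non-routine point is the bookkeeping with the anchor: one has to carefully apply the covariance rule $A(\shift{\bk}\bx)=A(\bx)-\bk$ and the invariance $\shift{A}\shift{\bk}\bx=\shift{A}\bx$ in the right places so that the sums telescope correctly. Positivity of $\vartheta_A$ (needed to invert it in (\ref{eq:palm_variant2})) is already recorded in \cite[Lemma~3.4]{basrak:planinic:2020}, so no separate argument is required.
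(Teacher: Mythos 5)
Your proof is correct and follows essentially the same route as the paper: the first identity is the exceedance-stationarity property (\ref{eq:time-change1}) applied to $\bx\mapsto h(\bx)\ind{A(\bx)=-\bk}$ and summed over $\bk$, which is exactly what your choice $g(\bk,\bx)=h(\bx)\ind{A(\bx)=\bk}$ in the Mecke form (\ref{eq:mecke}) unwinds to, and the second identity is obtained, as in the paper, by feeding $\tilde h(\bx)=h(\shift{A}\bx)/|\e(\bx)|$ back into the first. The only cosmetic difference is that you package the computation through (\ref{eq:mecke}) rather than writing out the term-by-term application of (\ref{eq:time-change1}); since the paper records the equivalence of the two formulations in \Cref{rem:mecke_tail}, this changes nothing of substance.
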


Since $\vartheta_A>0$, taking $h\equiv 1$ in (\ref{eq:palm_basic}) yields
\begin{align}\label{eq:theta_is_reciprocal}
\ex\abs{\e(\bZ^A)}=\vartheta_A^{-1} <\infty  \, .
\end{align}
Thus, if $\pr^*$ denotes the tilted probability 
$$\pr^*(B)=\vartheta_A \ex\Big[ \abs{\e(\bZ^A)} \1{B} \Big] \, , $$ 
we can rewrite (\ref{eq:palm_basic}) as
\begin{align*}
\pr(\bY\in \cdot \, ) = \pr^*(\shift{U}\bZ^A \in \cdot \,) \, ,  
\end{align*}
where $U=U(\bZ^A)$ is uniform on $\e(\bZ^A)$. Thus, the tail process can be obtained from the anchored process by biasing its distribution by the total number of the exceedance-points and then choosing the origin uniformly at random among the set of all exceedance-points. 
Conversely, (\ref{eq:palm_variant2}) implies that, instead by conditioning, the anchored tail process can be obtained from the tail process by debiasing its distribution by the total number of the exceedance-points and then shifting the origin to the anchor.


Relations (\ref{eq:palm_basic}) and (\ref{eq:palm_variant2}) can be seen as a version of the \textit{point-at-zero} (in our case, \textit{anchor-at-zero}) Palm duality as described in \cite[Chapters 8 and 9]{thorisson:2000}. \tcb{Here, one should think of anchors (that is, exceedance-points which are anchors of their clusters) as \enquote{points} in the  index set of all exceedance-points.} The tail process $\bY$ then plays the role of the stationary (that is, exceedance-stationary) process,  while the anchored process $\bZ^A$ represents its  {Palm version} -- it is the tail process conditioned on having an anchor at the origin; \tcb{observe that this is in contrast to the case in  \Cref{rem:tail_is_Palm}.}  
Compared to standard Palm theory, relation (\ref{eq:palm_basic}) represents the \textit{inversion formula}, cf.~\cite[Formula (9.17)]{last:penrose:2017}.\footnote{\tcb{It is not clear how one could state a corresponding Campbell theorem since the tail process, as a single cluster, always contains precisely one anchor (that is, point). One would need to consider an infinite number of clusters and maybe use or modify the framework of Sigman and Whitt~\cite{sigman:whitt:2019}, but we will not pursue this approach here.}}
We note that formulas (\ref{eq:palm_basic}) and (\ref{eq:palm_variant2}) (for the case $d=1$ and $A=A^{fe}$) appear independently in \cite[Excercise 5.29]{kulik:soulier:2020}.

\begin{proof}[Proof of \Cref{prop:Palm1}]
Using the shift-covariance of $A$ and the exceedance-stationarity of $\bY$, that is property (\ref{eq:time-change1}), we get
\begin{align*}
\vartheta_A \sum_{\bk \in \Z^d}\ex\left[h(\shift{\bk}\bZ^A)\ind{|Z_{\bk}^A|>1}\right] & = \sum_{\bk \in \Z^d} \ex[h(\shift{\bk}\bY)\ind{|Y_{\bk}|>1, A(\bY)=0}] \\
& = \sum_{\bk \in \Z^d} \ex[h(\shift{\bk}\bY)\ind{|Y_{\bk}|>1, A(\shift{\bk}\bY)=-\bk}] \\
&=\sum_{\bk \in \Z^d} \ex[h(\bY)\ind{|Y_{-\bk}|>1, A(\bY)=-\bk}] \\
& = \sum_{\bk \in \Z^d} \ex[h(\bY)\ind{A(\bY)=-\bk}] = \ex[h(\bY)] \, .
\end{align*}
The last equality in (\ref{eq:palm_basic}) follows from Fubini's theorem and the definition of $\e(\bZ^A)$. 

Equality in (\ref{eq:palm_variant2}) follows by applying (\ref{eq:palm_basic}) to the function
\begin{align*}
\tilde{h}(\bx) = h(\shift{A}\bx) / \abs{\e(\bx)} \, , \, \bx\in \espace.
\end{align*}
Indeed, by (\ref{eq:SI_of_anchorShift}) the functions $\tilde{h}$ is shift-invariant, and since $\shift{A}\bZ^A=\bZ^A$ by definition we have 
$$\tilde{h}(\shift{\bk}\bZ^A)=\tilde{h}(\bZ^A) = h(\bZ^A)/ \abs{\e(\bZ^A)}\, , $$ 
for all $\bk \in \Z^d$. Thus, 
\begin{align*}
\ex\left[\frac{h(\shift{A}\bY)}{\abs{\e(\bY)}}\right]&=\ex[\tilde{h}(\bY)]  = \vartheta_A \ex\left[ \tsum_{\bk \in \e(\bZ^A)} \tilde{h}(\shift{\bk}\bZ^A)\right] \\
& = \vartheta_A \ex\left[ h(\bZ^A) \cdot \abs{\e(\bZ^A)}/\abs{\e(\bZ^A)}\right] =\vartheta_A \ex\left[ h(\bZ^A) \right] \, .
\end{align*}
\end{proof}

The following corollary is an immediate consequence of (\ref{eq:palm_variant2}).

\begin{corollary}\label{cor:palm}
\begin{itemize}
\item[(i)] For an arbitrary anchoring function $A$, 
\begin{align*}
\vartheta_A = \ex\left[ \abs{\e(\bY)}^{-1}\right]=:\vartheta \, .
\end{align*}
Thus, the probability $\vartheta_A=\pr(A(\bY)=0)$ does not depend on the choice of the anchoring function $A$.
\item[(ii)] For different anchoring functions anchored tail processes are simply randomly shifted version of one another, that is,
\begin{align}\label{eq:shift_equivalence_ZA}
\shift{A'} \bZ^A \eind \bZ^{A'}
\end{align}
for any two anchoring functions $A,A'$.
\item[(iii)] For an arbitrary anchoring function $A$, the distributions of $\shift{A}\bY$ and $\bZ^A$ are equivalent, that is, $\pr(\shift{A}\bY\in B)=1$ if and only if $\pr(\bZ^A\in B)=1$ for every measurable subset $B\subseteq \espace$.
\end{itemize}
\end{corollary}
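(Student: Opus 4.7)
All three parts will follow from the Palm duality (\ref{eq:palm_variant2}) of \Cref{prop:Palm1} by a careful choice of the test function $h$, together with the basic fact that $\shift{A'}\shift{\bk}\bx=\shift{A'}\bx$ for every $\bk\in\e(\bx)$ and every anchoring function $A'$, which was noted in (\ref{eq:SI_of_anchorShift}). In particular, since $A(\bY)\in\e(\bY)$ almost surely, this gives the key identity $\shift{A'}\shift{A}\bY=\shift{A'}\bY$ a.s.\ that will be used repeatedly.

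For part (i), the plan is to apply (\ref{eq:palm_variant2}) with the constant function $h\equiv 1$. The left-hand side equals $1$, so rearranging gives $\vartheta_A=\ex[\abs{\e(\bY)}^{-1}]$. Since the right-hand side has no dependence on $A$, the probability $\vartheta_A$ is the same for every anchoring function; finiteness is immediate from $\abs{\e(\bY)}\geq 1$, and positivity was already noted after the definition of $\bZ^A$.

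For part (ii), fix anchoring functions $A$ and $A'$ and an arbitrary bounded measurable $f:\espace\to\R_+$. Applying (\ref{eq:palm_variant2}) with the test function $h(\bx)=f(\shift{A'}\bx)$ yields
\begin{align*}
\ex[f(\shift{A'}\bZ^A)] = \vartheta_A^{-1}\ex\!\left[\frac{f(\shift{A'}\shift{A}\bY)}{\abs{\e(\bY)}}\right] = \vartheta^{-1}\ex\!\left[\frac{f(\shift{A'}\bY)}{\abs{\e(\bY)}}\right],
\end{align*}
where I have used the key identity above and part (i). The right-hand side is exactly $\ex[f(\bZ^{A'})]$ by another application of (\ref{eq:palm_variant2}), this time with anchoring function $A'$ and test function $f$. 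This proves (\ref{eq:shift_equivalence_ZA}).

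For part (iii), let $B\subseteq\espace$ be measurable and apply (\ref{eq:palm_variant2}) with $h=\1{B^c}$:
\begin{align*}
\pr(\bZ^A\in B^c) = \vartheta^{-1}\ex\!\left[\frac{\1{\{\shift{A}\bY\in B^c\}}}{\abs{\e(\bY)}}\right].
\end{align*}
Since $\abs{\e(\bY)}^{-1}$ is strictly positive and almost surely finite (as $1\leq \abs{\e(\bY)}<\infty$ a.s.), the right-hand side vanishes if and only if $\pr(\shift{A}\bY\in B^c)=0$. Equivalently, $\pr(\bZ^A\in B)=1$ iff $\pr(\shift{A}\bY\in B)=1$, which is the asserted equivalence of distributions. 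No step here is a genuine obstacle; the only subtlety is the systematic use of the shift-invariance of $\shift{A'}$ on exceedance-shifted inputs, which collapses $\shift{A'}\shift{A}\bY$ to $\shift{A'}\bY$ and allows the two applications of (\ref{eq:palm_variant2}) to be chained.
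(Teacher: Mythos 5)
Your proof is correct and follows exactly the route the paper intends: the paper states \Cref{cor:palm} as an immediate consequence of (\ref{eq:palm_variant2}), and your choices of test function ($h\equiv 1$ for (i), $h=f\circ\shift{A'}$ combined with (\ref{eq:SI_of_anchorShift}) for (ii), and $h=\1{B^c}$ with the a.s.\ positivity of $\abs{\e(\bY)}^{-1}$ for (iii)) are precisely the intended instantiations.
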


The quantity $\vartheta=\ex\left[ \abs{\e(\bY)}^{-1}\right]$ will simply be called the \textit{(candidate) extremal index} of $\bY$, see~\cite[Remark 3.11]{basrak:planinic:2020}. On the other hand, (\ref{eq:shift_equivalence_ZA}) motivates the following definition.


\begin{definition}
If $\bZ$ is a random element of $\mathbb{E}$ such that $\shift{A}\bZ \eind \bZ^A$ for some (and then for every) anchoring function $A$, $\bZ$ will be called a \textbf{representative} of the anchored process. 
\end{definition}

Since for each $h:\espace\to \R_{+}$, the mapping $h\circ \shift{A}$ is shift-invariant on $\mathbb{E}$, $\bZ$ is a representative if and only if for some (and then for every) anchoring function $A$,
\begin{align*}
\ex[h(\bZ)]=\ex[h(\bZ^A)]
\end{align*}
for all measurable $h:\mathbb{E} \to \R_+$ which are shift-invariant; this can be stated as equality in distribution in the quotient space of $\mathbb{E}$ modulo shift-equivalence but we omit the details. 

If $\bZ$ is an arbitrary representative of the anchored process, since $\bx\mapsto |\e(\bx)|$ is shift-invariant, it follows directly that 
\begin{align*}
\ex\abs{\e(\bZ)} =  \vartheta^{-1}<\infty \, . 
\end{align*}

Moreover, \Cref{prop:Palm1}  yields the following characterization of representatives of the anchored process.

\begin{proposition}\label{prop:Palm_representatives}
Let $\bY$ be the tail process of a stationary random field such that $\pr(\bY\in \espace)=1$, and $\bZ=(Z_{\bi})_{\bi\in \Z^d}$ a random element of $\mathbb{E}$ such that $\ex\abs{\e(\bZ)}<\infty$. The following statements are equivalent.
\begin{itemize}
\item[(i)] $\bZ$ is a representative of the anchored process.
\item[(ii)] For all measurable $h:\espace\to \R_{+}$,
\begin{align}\label{eq:tail_palm_construction}
\ex[h(\bY)]= \frac{1}{\ex\abs{\e(\bZ)}} \ex\left[ \tsum_{\bk \in \e(\bZ)} h(\shift{\bk}\bZ)\right] \, .
\end{align}
\item[(iii)] For all measurable $h:\mathbb{E}\to \R_{+}$ which are shift-invariant,
\begin{align}\label{eq:tail_palm_variant_construction}
\ex\left[h(\bZ)\right]= \vartheta^{-1} \ex\left[\frac{h(\bY)}{\abs{\e(\bY)}}\right] \, .
\end{align}
\end{itemize}
\end{proposition}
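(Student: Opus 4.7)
The plan is to prove the cyclic chain (i) $\Rightarrow$ (ii) $\Rightarrow$ (iii) $\Rightarrow$ (i) by leveraging \Cref{prop:Palm1} together with the fact that every quantity of interest (the cardinality $\abs{\e(\cdot)}$, the anchor shift $\shift{A}$, and the orbit sum $\sum_{\bk\in\e(\cdot)} h(\shift{\bk}\cdot)$) is shift-invariant on $\mathbb{E}$. Two key identities will recur: first, that for any shift-invariant measurable $H:\mathbb{E}\to \R_+$ we have $\ex[H(\bZ)]=\ex[H(\bZ^A)]$ whenever $\bZ$ is a representative; second, that $\vartheta = \ex[\abs{\e(\bY)}^{-1}]= 1/\ex \abs{\e(\bZ)}$, the latter equality being used to convert $\vartheta_A$ into $1/\ex\abs{\e(\bZ)}$ in the inversion formula.

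For (i) $\Rightarrow$ (ii), I would start from (\ref{eq:palm_basic}) written as $\ex[h(\bY)] = \vartheta_A\, \ex\big[\sum_{\bk\in\e(\bZ^A)} h(\shift{\bk}\bZ^A)\big]$. The integrand $\bx\mapsto \sum_{\bk\in\e(\bx)} h(\shift{\bk}\bx)$ is shift-invariant on $\mathbb{E}$ because $\e(\shift{\bj}\bx)=\e(\bx)-\bj$ and the orbit $\{\shift{\bk}\bx:\bk\in\e(\bx)\}$ is shift-invariant. Since $\bZ$ is a representative, $\shift{A}\bZ\eind \bZ^A$, and because any shift-invariant functional agrees on $\bZ$ and $\shift{A}\bZ$, it also agrees on $\bZ$ and $\bZ^A$. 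Applying this first with the sum functional and then with $|\e(\cdot)|$ itself yields both the right-hand side of (ii) and the equality $\ex\abs{\e(\bZ)}=\ex\abs{\e(\bZ^A)}=\vartheta^{-1}=\vartheta_A^{-1}$ (using \Cref{cor:palm}(i)), which rewrites $\vartheta_A$ as $1/\ex\abs{\e(\bZ)}$.

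For (ii) $\Rightarrow$ (iii), I would apply (ii) to the test function $\tilde h(\bx) = h(\bx)/\abs{\e(\bx)}$ where $h$ is shift-invariant. Then $\tilde h$ is also shift-invariant, so $\tilde h(\shift{\bk}\bZ)=\tilde h(\bZ)=h(\bZ)/\abs{\e(\bZ)}$, and the inner sum collapses to $\abs{\e(\bZ)}\cdot h(\bZ)/\abs{\e(\bZ)}=h(\bZ)$, producing
\begin{equation*}
\ex\!\left[\frac{h(\bY)}{\abs{\e(\bY)}}\right] = \frac{\ex[h(\bZ)]}{\ex\abs{\e(\bZ)}}.
\end{equation*}
Choosing $h\equiv 1$ in (ii) also gives $\ex\abs{\e(\bZ)}^{-1}\cdot \ex\abs{\e(\bZ)} = 1$, while \Cref{cor:palm}(i) identifies $\ex[\abs{\e(\bY)}^{-1}]=\vartheta$, so $\ex\abs{\e(\bZ)}=\vartheta^{-1}$, and (iii) follows after rearrangement.

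For (iii) $\Rightarrow$ (i), it suffices to verify that $\ex[g(\shift{A}\bZ)] = \ex[g(\bZ^A)]$ for every measurable $g:\espace\to\R_+$. On the one hand, (\ref{eq:palm_variant2}) gives $\ex[g(\bZ^A)] = \vartheta^{-1}\ex\!\left[g(\shift{A}\bY)/\abs{\e(\bY)}\right]$. On the other hand, the map $\bx\mapsto g(\shift{A}\bx)$ is shift-invariant on $\mathbb{E}$ by (\ref{eq:SI_of_anchorShift}), and applying (iii) to it yields exactly the same right-hand side. Thus $\shift{A}\bZ\eind \bZ^A$, concluding the proof. I do not anticipate a serious obstacle; the only thing requiring care is bookkeeping of shift-invariance (which ensures that equalities proved for $\bZ^A$ transfer to $\bZ$ and vice versa) and the identification $\vartheta_A=\vartheta=1/\ex\abs{\e(\bZ)}$ obtained by plugging trivial functions into the main formulas.
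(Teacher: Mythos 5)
Your proposal is correct and follows essentially the same route as the paper's (much terser) proof: (i) $\Rightarrow$ (ii) via shift-invariance of the orbit sum $\bz\mapsto\sum_{\bk\in\e(\bz)}h(\shift{\bk}\bz)$ together with (\ref{eq:palm_basic}), (ii) $\Rightarrow$ (iii) by testing against $h(\cdot)/\abs{\e(\cdot)}$ as in the proof of \Cref{prop:Palm1}, and (iii) $\Rightarrow$ (i) by matching against (\ref{eq:palm_variant2}). Your bookkeeping of the identification $\vartheta_A=\vartheta=1/\ex\abs{\e(\bZ)}$ is also consistent with \Cref{cor:palm}(i) and (\ref{eq:theta_is_reciprocal}).
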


\begin{proof}
If $\bZ$ is a representative, then (\ref{eq:tail_palm_construction}) follows from (\ref{eq:palm_basic}) since the mapping $\bz\mapsto \sum_{\bk\in\e(\bz)} h(\shift{\bk}\bz)$ is shift-invariant on $\mathbb{E}$ for all $h$. The fact that (ii) implies (iii) follows as in the proof of \Cref{prop:Palm1}. Finally, if (iii) holds, (i) follows from (\ref{eq:palm_variant2}) and the definition of a representative of the anchored process.
\end{proof}

Finally, note that the definition of the anchored process, as well as the conclusions of \Cref{prop:Palm1},  \Cref{cor:palm} and \Cref{prop:Palm_representatives} remain valid when $\bY$ is an arbitrary exceedance-stationary random element of $\espace$. We now show that this class is in fact characterized by the inversion formula (\ref{eq:tail_palm_construction}).

%

\begin{proposition}\label{prop:exc_stat_constr}
Let $\bZ$ be an arbitrary random element of $\mathbb{E}$ such that $\ex\abs{\e(\bZ)}<\infty$, and let $\bY$ be a random element of $\espace$ with distribution satisfying (\ref{eq:tail_palm_construction})
for all measurable $h:\espace\to \R_{+}$. Then $\bY$ is exceedance-stationary and $\bZ$ is necessarily a representative of its anchored process.
%
\end{proposition}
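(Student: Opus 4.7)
The plan is to deduce exceedance-stationarity of $\bY$ via \Cref{thm:Y_is_point_stat}, and then apply \Cref{prop:Palm_representatives}(iii) — which, as noted by the author, remains valid for arbitrary exceedance-stationary $\bY$ — to conclude that $\bZ$ is a representative of the anchored process.

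To verify exceedance-stationarity, I would check (\ref{eq:time-change1}) directly from the defining formula (\ref{eq:tail_palm_construction}). For its left-hand side, apply (\ref{eq:tail_palm_construction}) with test function $\bx \mapsto h(\shift{\bk}\bx)\ind{|x_{\bk}|>1}$, and use the identities $\shift{\bk}\shift{\bj} = \shift{\bj+\bk}$ and $|(\shift{\bj}\bZ)_{\bk}|>1 \Leftrightarrow \bj+\bk \in \e(\bZ)$, to obtain
\begin{align*}
\ex[h(\shift{\bk}\bY)\ind{|Y_{\bk}|>1}] = \frac{1}{\ex|\e(\bZ)|}\ex\Big[\sum_{\substack{\bj \in \e(\bZ)\\ \bj+\bk \in \e(\bZ)}} h(\shift{\bj+\bk}\bZ)\Big] \, .
\end{align*}
Reindexing by $\bj' = \bj+\bk$ produces exactly the expression one obtains by applying (\ref{eq:tail_palm_construction}) to the right-hand side of (\ref{eq:time-change1}) (with test function $\bx \mapsto h(\bx)\ind{|x_{-\bk}|>1}$). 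Hence (\ref{eq:time-change1}) holds, and \Cref{thm:Y_is_point_stat} gives exceedance-stationarity; note also that $\bY$ takes values in $\espace$ automatically, since each $\shift{\bk}\bZ$ with $\bk \in \e(\bZ)$ does.

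For the second claim, I would first take $h(\bx) = 1/|\e(\bx)|$ in (\ref{eq:tail_palm_construction}); the inner summands all equal $1/|\e(\bZ)|$, so the identity collapses to $\ex[1/|\e(\bY)|] = 1/\ex|\e(\bZ)|$, i.e.\ $\ex|\e(\bZ)| = \vartheta^{-1}$. Next, for any shift-invariant $h : \mathbb{E} \to \R_+$, apply (\ref{eq:tail_palm_construction}) to $\tilde h(\bx) = h(\bx)/|\e(\bx)|$ (again shift-invariant on $\mathbb{E}$). By shift-invariance, $\tilde h(\shift{\bk}\bZ) = h(\bZ)/|\e(\bZ)|$ for every $\bk \in \e(\bZ)$, so the inner sum reduces to $h(\bZ)$ and
\begin{align*}
\ex\Big[\frac{h(\bY)}{|\e(\bY)|}\Big] = \frac{\ex[h(\bZ)]}{\ex|\e(\bZ)|} = \vartheta\, \ex[h(\bZ)] \, ,
\end{align*}
which is precisely condition (\ref{eq:tail_palm_variant_construction}). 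By \Cref{prop:Palm_representatives}(iii), $\bZ$ is therefore a representative of the anchored process. The only real subtlety is invoking \Cref{prop:Palm_representatives} in this slightly more general setting; once that is granted, both parts reduce to bookkeeping with (\ref{eq:tail_palm_construction}) and the shift-invariance of $|\e(\cdot)|$.
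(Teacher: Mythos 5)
Your proof is correct, but it takes a different route from the paper's for the first claim. The paper rewrites (\ref{eq:tail_palm_construction}) as $\pr(\bY\in\cdot\,)=\pr^*(\shift{U}\bZ\in\cdot\,)$ with $U$ uniform on $\e(\bZ)$ under a tilted law $\pr^*$, and then verifies \Cref{def:point_stat} directly: for any bijective exceedance-map $\tau$, the point $\tau(\shift{U}\bZ)+U=\tau(\bZ,U)$ is the image of $U$ under a bijection of $\e(\bZ)$, hence again uniform, so $\shift{\tau}\bY\eind\bY$. You instead verify the identity (\ref{eq:time-change1}) by a change of index $\bj'=\bj+\bk$ in the inner sum and then invoke the equivalence of \Cref{thm:Y_is_point_stat}; your computation is right (the key facts $\shift{\bk}\shift{\bj}=\shift{\bj+\bk}$ and $|(\shift{\bj}\bZ)_{\bk}|>1\Leftrightarrow \bj+\bk\in\e(\bZ)$ both check out, and $\bY\in\espace$ a.s.\ as you note). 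The paper's argument buys a transparent probabilistic explanation of \emph{why} bijective exceedance-maps preserve the law (a uniformly chosen point pushed through a bijection stays uniform), which the author explicitly exploits in the discussion following the proposition; yours is a purely computational verification of the Mecke-type identity that bypasses the $\pr^*$/uniform-variable machinery. For the second claim, both you and the paper appeal to \Cref{prop:Palm_representatives} (with the author's remark that it extends to exceedance-stationary $\bY$); the paper simply uses that (\ref{eq:tail_palm_construction}) is condition (ii) there, whereas you re-derive the implication (ii)$\Rightarrow$(iii) and then use (iii)$\Rightarrow$(i) — a harmless redundancy, not a gap.
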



\begin{proof}
Relation (\ref{eq:tail_palm_construction}) implies that
\begin{align*}
\pr(\bY\in \cdot \, ) = \pr^*(\shift{U}\bZ\in \cdot \,) \, ,  
\end{align*}
where $U=U(\bZ)$ is uniform on $\e(\bZ)$ and $\pr^*$ the tilted probability satisfying $\pr^*(B)=\tfrac{1}{\ex\abs{\e(\bZ)}} \ex\big[\abs{\e(\bZ)} \1{B} \big]$. 
We will show that $\bY$ is exceedance-stationary by verifying (\ref{eq:point_stat}). 

Assume w.l.o.g.\ that $\bZ$ is an element $\espace$, i.e.\ that it has an exceedance-point at the origin -- if not, simply take $\shift{A}\bZ$ for an arbitrary anchoring function $A$. Let $\tau$ be an arbitrary bijective exceedance-map on $\espace$. Then
\begin{align*}
\pr(\shift{\tau} \bY \in \cdot \,) =  \pr^*(\shift{\tau}\shift{U}\bZ\in \cdot \,) = \pr^*(\shift{\tau(\shift{U} \bZ)+U}\bZ\in \cdot \,) \, .
\end{align*} 
Since $\tau$ is assumed bijective, $\bk\mapsto \tau(\shift{\bk} \bZ) + \bk=\tau(\bZ,\bk)$ is a bijection on $\e(\bZ)$ and thus the distribution of $\tau(\shift{U} \bZ)+U$ is again uniform on $\e(\bZ)$. Thus, $\pr(\shift{\tau} \bY \in \cdot \,)= \pr(\bY \in \cdot \,)$, i.e.\ (\ref{eq:point_stat}) indeed holds. The fact that $\bZ$ is a representative of the anchored process of $\bY$ follows from \Cref{prop:Palm_representatives}.
%
%
%
\end{proof}


The proof of \Cref{prop:exc_stat_constr} gives a nice intuition on why the tail process $\bY$ (with finitely many exceedance-points) is exceedance-stationary in the sense of \Cref{def:point_stat}, i.e.\ for every bijective exceedance-map $\tau$, $\shift{\tau}\bY\teind \bY$. To see this, as in the discussion preceding \Cref{exa:MA_intro} (and in the end of \Cref{sub:AnchoredTailProcess}), think of the tail process as the cluster of extremes of the underlying random field $\bX$ containing and being centered around an exceedance-point which is chosen uniformly at random. Applying $\shift{\tau}$ to this recentered cluster shifts to the origin to the exceedance-point of the cluster chosen by $\tau$. Since $\tau$ is bijective (think of $\tau_1$ from \Cref{exa:finitely_many_exc}), the chosen exceedance-point is again \enquote{uniformly distributed} among all of the exceedance-points of $\bX$.

\subsection{Palm duality for the spectral process}\label{subs:palm_spectral}
Let $\bY$ be a tail process of a stationary random field, and $\bTheta=\bY/|Y_{\bo}|$ its spectral (tail) process. Since $|Y_{\bo}|$ is $\mathrm{Pareto}(\alpha)$-distributed and independent of $\bTheta$ are, condition $\pr(\bY\in \espace)=1$ is easily seen to be equivalent to
\begin{align*}
\pr(\tlim_{|\bi| \toi} |\Theta_{\bi}| = 0)=\pr(\tlim_{|\bi| \toi} |Y_{\bi}| = 0)=1 \, ,
\end{align*} 
where  $|\bi|=\max_{k=1,\dots,d}|i_k|$ for $\bi=(i_1,\dots,i_d)\in \Z^d$. Thus, one can regard the tail and the spectral tail process as elements of the Banach space
\begin{align*}
l_0=\{\bx\in \R^{\Z^d} : \tlim_{|\bi|\to\infty}|x_{\bi}|= 0\} \, ,
\end{align*}
 with the
uniform norm 
$$\|\bx\| := \sup_{\bi\in\Z^d} |x_{\bi}| \, , \; \bx\in l_0 \, ,$$ 
since the corresponding Borel $\sigma$-algebra equals the product $\sigma$-algebra on $l_0$. \tcb{We note that the notation $\|\cdot\|$ is used instead of the usual $\|\cdot\|_{\infty}$ since this is the only norm we will consider on $l_0$.}

For any representative $\bZ$ of the anchored process define the corresponding \textit{spectral} process $\bQ$ by $\bQ=\bZ/\|\bZ\|$; note that by construction $\|\bQ\|=1$ almost surely. Moreover, for each anchoring function $A$, denote by $\bQ^A$ the spectral process of $\bZ^A$. 

\begin{definition}
Any random element $\bQ$ of $l_0$ such that 
$$\ex[h(\bQ)]=\ex[h(\bQ^A)] \, ,$$ 
holds for all shift-invariant measurable $h:l_0\to\R_+$, for at least one (and then for every) anchoring function $A$, will be called a representative of the anchored spectral (tail) process.
\end{definition}
Note that the first maximum anchoring function $A=A^{fm}$ and the corresponding shift $\shift{A}$ are well-defined on any element of $l_0\setminus\{\bo\}$, and that $A(t\bx)=A(\bx)$ for all $\bx\in l_0\setminus\{\bo\}, t>0$. In particular, the extremal index $\vartheta$ satisfies 
\begin{align*}
\vartheta=\pr(A^{fm}(\bY)=0)=\pr(A^{fm}(\bTheta)=0)\, .
\end{align*}
The following result follows from \cite[Lemma 3.7 and Remark 3.7]{basrak:planinic:2020} and shows that the spectral decomposition of the tail process carries over to the anchored process.
 
 \begin{lemma}\label{lem:polar_decomposition_Z}
Let $\bY$ be a tail process with index $\alpha>0$ and such that $\pr(\bTheta\in l_0)=1$.
\begin{itemize}
\item[(i)]  For every representative $\bZ$ of the anchored process, $\|\bZ\|$ is $\mathrm{Pareto}(\alpha)$-distributed.
\item[(ii)]  
A random element $\bQ$ in $l_0$ with distribution
\begin{align*}
\pr(\bQ \in \cdot \,)=\pr(\bTheta \in \cdot \mid A^{fm}(\bTheta)=0) \, 
\end{align*}  
is a representative of the anchored spectral process.
\item[(iii)] There exists a representative $\bZ$ of the anchored process such that $\|\bZ\|$ and $\bQ$ are independent.
\item[(iv)] If $\bQ$ is an arbitrary representative of the anchored spectral process and $Y$ is $\mathrm{Pareto}(\alpha)$-distributed and independent of $\bQ$, $\bZ:=Y\cdot \bQ$ is a representative of the anchored process.
\end{itemize}
\end{lemma}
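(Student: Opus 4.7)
The plan is to exploit the spectral decomposition $\bY = |Y_{\bo}|\,\bTheta$, with $|Y_{\bo}|\sim\mathrm{Pareto}(\alpha)$ independent of $\bTheta$, together with the scale-invariance of the first-maximum anchoring function, which yields $A^{fm}(\bY) = A^{fm}(\bTheta)$. I will prove the statements in the order (ii), (iv), (iii), (i), since (ii) provides the concrete realization of $\bZ^{A^{fm}}$ and $\bQ^{A^{fm}}$ that all the subsequent parts rely on.

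For (ii), I compute $\bZ^{A^{fm}}$ from its defining conditional law $\bY \mid A^{fm}(\bY) = \bo$. Since $A^{fm}(\bY) = A^{fm}(\bTheta)$ and $|\Theta_{\bo}| = 1$ a.s., on the event $\{A^{fm}(\bTheta) = \bo\}$ the norm $\|\bTheta\|$ equals $|\Theta_{\bo}| = 1$. Independence of $|Y_{\bo}|$ and $\bTheta$ then yields $\|\bZ^{A^{fm}}\| \eind |Y_{\bo}|$, which is $\mathrm{Pareto}(\alpha)$, and $\bQ^{A^{fm}} = \bZ^{A^{fm}}/\|\bZ^{A^{fm}}\|$ has the conditional distribution of $\bTheta$ given $A^{fm}(\bTheta) = \bo$. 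Since $\bQ^{A^{fm}}$ is by definition a representative of the anchored spectral process, so is the $\bQ$ in the statement.

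For (iv), given an arbitrary representative $\bQ$ of the anchored spectral process and an independent $Y\sim\mathrm{Pareto}(\alpha)$, I verify that $\bZ := Y\bQ$ is a representative of the anchored process by checking $\ex[h(\bZ)] = \ex[h(\bZ^{A^{fm}})]$ for every shift-invariant measurable $h:\mathbb{E}\to\R_+$. The auxiliary map $\tilde h(\bq) := \ex[h(Y\bq)]$ is shift-invariant on $l_0$ (scalars commute with $\shift{\bk}$), so the representative property of $\bQ$ gives $\ex[h(Y\bQ)] = \ex[\tilde h(\bQ)] = \ex[\tilde h(\bQ^{A^{fm}})] = \ex[h(Y\bQ^{A^{fm}})]$. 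By independence and the identification from (ii), $Y\bQ^{A^{fm}}$ has the same law as $|Y_{\bo}|\bTheta$ conditioned on $A^{fm}(\bTheta) = \bo$, which is $\bZ^{A^{fm}}$; this closes the chain of equalities.

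Part (iii) follows immediately by taking $\bQ$ as in (ii) together with an independent $Y\sim\mathrm{Pareto}(\alpha)$: by (iv), $\bZ := Y\bQ$ is a representative, and $\|\bZ\| = Y$ is independent of $\bZ/\|\bZ\| = \bQ$. Finally, for (i), any representative $\bZ$ agrees with $\bZ^{A^{fm}}$ in distribution when tested against shift-invariant functions; applying this to $\bx\mapsto f(\|\bx\|)$ (which is shift-invariant on $l_0$) yields $\|\bZ\| \eind \|\bZ^{A^{fm}}\|$, and the latter is $\mathrm{Pareto}(\alpha)$ by (ii). The only point requiring care is tracking the measurability and independence of $|Y_{\bo}|$ and $\bTheta$ under the conditioning event, but once the scale-invariance $A^{fm}(\bY) = A^{fm}(\bTheta)$ is observed, everything else is essentially bookkeeping.
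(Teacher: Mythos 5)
Your proof is correct. Note that the paper does not actually prove this lemma itself: it simply states that the result \enquote{follows from [BP21, Lemma 3.7 and Remark 3.7]}, so there is no internal argument to compare against. Your self-contained derivation is a legitimate alternative and rests on exactly the right ingredients: the scale-invariance $A^{fm}(t\bx)=A^{fm}(\bx)$ gives $\{A^{fm}(\bY)=\bo\}=\{A^{fm}(\bTheta)=\bo\}$, and since this is a $\sigma(\bTheta)$-measurable event of positive probability ($\vartheta_{A}>0$), conditioning on it preserves both the $\mathrm{Pareto}(\alpha)$ law of $|Y_{\bo}|$ and its independence from $\bTheta$, while forcing $\|\bTheta\|=|\Theta_{\bo}|=1$ on the event; this identifies $\bZ^{A^{fm}}\eind |Y_{\bo}|\cdot(\bTheta\mid A^{fm}(\bTheta)=\bo)$ and hence (ii). The transfer argument in (iv) via $\tilde h(\bq)=\ex[h(Y\bq)]$, which is shift-invariant because scalar multiplication commutes with $\shift{\bk}$, is sound, and (iii) and (i) then follow as you say. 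Two points of bookkeeping you gloss over but which are easily supplied: in (iv) one should check that $\bZ=Y\bQ$ lands in $\mathbb{E}$, which holds because any representative $\bQ$ of the anchored spectral process satisfies $\|\bQ\|=1$ a.s.\ (test the shift-invariant indicator of $\{\|\bx\|=1\}$) and $Y>1$ a.s.; and in (i) the test functions $\bx\mapsto\ind{\|\bx\|>y}$ are shift-invariant, measurable and nonnegative, so the definition of a representative applies directly. With those remarks the argument is complete.
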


%

We are now in position to establish a duality between the spectral tail process and representatives of the anchored spectral process.

\begin{proposition}\label{prop:exc_stat_representation}
Let $\bY$ be a tail process with index $\alpha>0$ and such that $\pr(\bTheta\in l_0)=1$. Let $\bQ=(Q_{\bi})_{\bi\in \Z^d}$ be any representative of its  anchored spectral process. Then 
\begin{align}\label{eq:palmSpectral_basic}
\ex[h(\bTheta)]= \vartheta \ex\left[\tsum_{\bk \in \Z^d}h\left(\frac{\shift{\bk}\bQ}{|Q_{\bk}|}\right)|Q_{\bk}|^{\alpha}\right] \, ,
\end{align}
and
\begin{align}\label{eq:palmSpectral_variant2}
\ex\left[h(\shift{A}\bQ)\right] = \vartheta^{-1} \ex\left[h\left(\frac{\shift{A}\bTheta}{\|\bTheta\|}\right) \cdot \frac{\|\bTheta\|^{\alpha}}{\sum_{\bk \in \Z^d}|\Theta_{\bk}|^{\alpha}}\right] \, .
\end{align} 
for all measurable $h:l_0\to \R_{+}$, where $A=A^{fm}$ is the first maximum anchoring function. The $\bk$th summand on the right hand side of (\ref{eq:palmSpectral_basic}) is understood to be zero if $|Q_{\bk}|=0$.
\end{proposition}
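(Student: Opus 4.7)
The plan is to derive both identities from \Cref{prop:Palm_representatives} (the representative version of the tail/anchored Palm duality) by choosing a convenient representative of the anchored process and applying suitable test functions. The key idea is to use the independent polar decomposition provided by \Cref{lem:polar_decomposition_Z}(iv): take $Y$ to be $\mathrm{Pareto}(\alpha)$-distributed, independent of $\bQ$, and set $\bZ := Y\cdot \bQ$, which is then a representative of the anchored process. Note that $\|\bQ\|=1$ so $|Q_\bk|\leq 1$ for all $\bk\in\Z^d$, and $|Z_\bk|=Y|Q_\bk|$.

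For (\ref{eq:palmSpectral_basic}), I would apply \Cref{prop:Palm_representatives}(ii) to the function $\tilde h(\bx) := h(\bx/|x_{\bo}|)$ for $\bx\in\R^{\Z^d}_0$. The left-hand side becomes $\ex[h(\bY/|Y_{\bo}|)] = \ex[h(\bTheta)]$. On the right-hand side, using $\shift{\bk}\bZ = Y\shift{\bk}\bQ$ and $|Z_\bk|=Y|Q_\bk|$, the scaling factor $Y$ cancels and one gets $\tilde h(\shift{\bk}\bZ)=h(\shift{\bk}\bQ/|Q_\bk|)$ on the event $\{|Q_\bk|>0\}$. Conditioning on $\bQ$ and using independence of $Y$, the indicator $\ind{|Z_\bk|>1}$ contributes $\pr(Y>1/|Q_\bk|\mid \bQ)=|Q_\bk|^\alpha$ (with the convention that this is $0$ when $|Q_\bk|=0$). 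Combined with $\ex|\e(\bZ)|^{-1}=\vartheta$, this yields (\ref{eq:palmSpectral_basic}).

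For (\ref{eq:palmSpectral_variant2}), my plan is to derive it from (\ref{eq:palmSpectral_basic}) by applying it to the test function
\begin{equation*}
\tilde h(\bt) := g\left(\frac{\shift{A}\bt}{\|\bt\|}\right)\cdot \frac{\|\bt\|^\alpha}{\sum_{\bk}|t_\bk|^\alpha}
\end{equation*}
for an arbitrary measurable $g:l_0\to\R_+$, where $A=A^{fm}$ is the first-maximum anchor (well-defined on $l_0\setminus\{\bo\}$, and $\bTheta,\bQ$ are a.s.\ nonzero since $|\Theta_\bo|=|\Theta_\bo|=1$ and $\|\bQ\|=1$). The LHS of (\ref{eq:palmSpectral_basic}) becomes the RHS of (\ref{eq:palmSpectral_variant2}) up to the factor $\vartheta^{-1}$. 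For the RHS of (\ref{eq:palmSpectral_basic}), the crucial observation is that $A(\shift{\bk}\bQ/|Q_\bk|)=A(\bQ)-\bk$ (shift-covariance of $A$, and scaling invariance), which forces the normalized shift argument to collapse to $\shift{A}\bQ$, independently of $\bk$, while the weight $\|\cdot\|^\alpha/\sum_\bj|\cdot|^\alpha$ evaluated at $\shift{\bk}\bQ/|Q_\bk|$ simplifies to $|Q_\bk|^{-\alpha}/(|Q_\bk|^{-\alpha}\sum_\bj |Q_\bj|^\alpha)=1/\sum_\bj|Q_\bj|^\alpha$. Multiplying by $|Q_\bk|^\alpha$ and summing over $\bk$ collapses the sum to $1$, and (\ref{eq:palmSpectral_variant2}) drops out.

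The main obstacle is bookkeeping rather than conceptual: keeping track of the convention that summands vanish when $|Q_\bk|=0$, verifying that $A^{fm}$ is well-defined on every configuration that appears, and confirming the scaling identity $A(\shift{\bk}\bQ/|Q_\bk|)=A(\bQ)-\bk$ so that $\shift{A}$ eliminates the dependence on $\bk$ in Step~2. Once these are handled, both identities follow from a direct application of \Cref{prop:Palm_representatives} together with the elementary fact $\pr(Y>1/t)=t^\alpha$ for $t\in(0,1]$.
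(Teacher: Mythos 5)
Your proposal is correct and follows essentially the same route as the paper: for (\ref{eq:palmSpectral_basic}) it builds $\bZ=Y\cdot\bQ$ via \Cref{lem:polar_decomposition_Z}, applies the duality (\ref{eq:tail_palm_construction}) to a radially decomposed test function and integrates out the Pareto variable using $|Q_{\bk}|\le 1$; for (\ref{eq:palmSpectral_variant2}) it plugs the same weighted test function $h(\shift{A}\,\cdot/\|\cdot\|)\,\|\cdot\|^{\alpha}/\sum_{\bj}|\cdot_{\bj}|^{\alpha}$ into (\ref{eq:palmSpectral_basic}) and uses shift-invariance of $\shift{A}$ together with $\|\bQ\|=1$ to collapse the sum. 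The only cosmetic difference is that the paper carries the radial coordinate along via a joint function $g(|x_{\bo}|,\bx/|x_{\bo}|)$ to record the slightly stronger identity (\ref{eq:polar_decomposition_Z}) for later reuse.
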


Relation (\ref{eq:palmSpectral_basic}) yields
\begin{align*}
\ex\left[\tsum_{\bk \in \Z^d}|Q_{\bk}|^{\alpha}\right]=\vartheta^{-1} <\infty  \, ,
\end{align*}
and
\begin{align}\label{eq:spectral_change_of_measure}
\pr(\bTheta\in \cdot \, ) = \pr^*\left(\frac{\shift{T}\bQ}{|Q_{T}|} \in \cdot \,\right) \, ,  
\end{align}
where $\pr^*$ satisfies $\pr^*(B)=\vartheta \ex\left[ \tsum_{\bk \in \Z^d}|Q_{\bk}|^{\alpha} \1{B}\right]$, and $T=T(\bQ)$ is a random element of $\Z^d$ with distribution
\begin{align*}
\pr^*(T=\bk \mid \bQ)=\frac{|Q_{\bk}|^{\alpha}}{\tsum_{\bj \in \Z^d}|Q_{\bj}|^{\alpha}} \, , \, \bk \in \Z^d.
\end{align*}

Similarly, (\ref{eq:palmSpectral_variant2}) implies 
\begin{align}\label{eq:extremal_repre_spectral}
\ex\left[ \frac{\|\bTheta\|^{\alpha}}{\sum_{\bk \in \Z^d}|\Theta_{\bk}|^{\alpha}}\right]=\vartheta>0  \, ,
\end{align}
and that the distribution of $\bQ$ (up to a random shift) can be obtained by debiasing the distribution of $\bTheta/\|\bTheta\|$ by $\tsum_{\bk \in \Z^d}|\Theta_{\bk}|^{\alpha}/\|\bTheta\|^{\alpha}$. 

For $d=1$, (\ref{eq:palmSpectral_basic}) essentially appears in \cite[Remark 2.13]{dombry:hashorva:soulier:2018} but is stated in terms of the so-called tail measure $\nu$ of $\bY$ and called a \textit{moving shift} representation of $\nu$. On the other hand, equality (\ref{eq:palmSpectral_variant2}) was known to hold only for $\alpha$-homogeneous (and shift-invariant) functions $h$, see \cite[Lemma 3.7]{planinic:soulier:2018}. In \Cref{sec:MA}  we utilize (\ref{eq:palmSpectral_variant2}) to determine the distribution of representatives of the anchored process for a large class of tail processes. 

In the terminology of \cite[Introduction]{drees:2021}, (\ref{eq:spectral_change_of_measure}) means that the distribution of $\bTheta$ is the \textit{RS-transform} of the distribution of $\bQ$ under $\pr^*$. As noted in  \cite{drees:2021}, it is easy to verify that every RS-transformed distribution is invariant under the RS-transformation. In particular, (\ref{eq:spectral_change_of_measure}) implies that the distribution of $\bTheta$ is invariant under the RS-transformation. This gives an alternative proof of this invariance property of $\bTheta$, originally noticed and proved in \cite[Theorem 2.4]{janssen:2019}.

\begin{proof}[Proof of \Cref{prop:exc_stat_representation}]

Let $Y$ be a $\mathrm{Pareto}(\alpha)$-distributed random variable, independent of $\bQ$. \Cref{lem:polar_decomposition_Z} implies that $\bZ=(Z_{\bi})_{\bi\in \Z^d}:=(Y\cdot Q_{\bi})_{\bi\in \Z^d}$ is a representative of the anchored process. \tcb{For every  measurable $g:(1,\infty)\times l_0\to \R_{+}$, applying (\ref{eq:tail_palm_construction}) to the function
\begin{align*}
h(\bx):=g(|x_{\bo}|, \bx/|x_{\bo}|)\1{\{|x_{\bo}|>1\}} \, , \; \bx\in l_0 \, ,
\end{align*}
yields}
%
\begin{align}
\ex[g(|Y_{\bo}|,\bTheta)] &=\ex[h(\bY)]=\vartheta \ex\left[\sum_{\bk\in \Z^d} g\left(|Z_{\bk}|, \frac{\shift{\bk}\bZ}{|Z_{\bk}|}\right) \ind{|Z_{\bk}|>1}\right] && \notag\\
&= \vartheta \ex\left[\sum_{\bk \in \Z^d}\int_{1}^{\infty} g\left(y|Q_{\bk}|, \frac{\shift{\bk}\bQ}{|Q_{\bk}|}\right)\ind{y|Q_{\bk}|>1}\alpha y^{-\alpha -1} dy \right] && \notag \\
&= \vartheta \ex\left[\sum_{\bk \in \Z^d}|Q_{\bk}|^{\alpha}\int_{1}^{\infty} g\left(z,\frac{\shift{\bk}\bQ}{|Q_{\bk}|}\right)\alpha z^{-\alpha -1} dz \right] &&  \text{ since } |Q_{\bk}|\le 1 \notag\\
&= \vartheta \ex\left[\sum_{\bk \in \Z^d}g\left(Y, \frac{\shift{\bk}\bQ}{|Q_{\bk}|}\right)|Q_{\bk}|^{\alpha}\right] \, . \label{eq:polar_decomposition_Z} &&
\end{align}
In particular, (\ref{eq:palmSpectral_basic}) holds. 

To prove (\ref{eq:palmSpectral_variant2}) we will apply (\ref{eq:palmSpectral_basic}) to the function
\begin{align*}
\tilde{h}(\bx)=h\left(\frac{\shift{A}\bx}{\|\bx\|}\right) \frac{\|\bx\|^{\alpha}}{\sum_{\bj \in \Z^d} |x_{\bj}|^{\alpha}} \, , \; \bx\in l_0\setminus\{\bo\} \, ,
\end{align*}
where $A=A^{fm}$. We have
\begin{align*}
\vartheta^{-1} \ex\left[h\left(\frac{\shift{A}\bTheta}{\|\bTheta\|}\right) \cdot \frac{\|\bTheta\|^{\alpha}}{\sum_{\bk \in \Z^d}|\Theta_{\bk}|^{\alpha}}\right] &= \vartheta^{-1} \ex\left[\tilde{h}(\bTheta)\right] \\
& =  \ex\left[\sum_{\bk \in \Z^d}\tilde{h}\left(\frac{\shift{\bk}\bQ}{|Q_{\bk}|}\right)|Q_{\bk}|^{\alpha}\right] \\
&= \ex\left[\sum_{\bk \in \Z^d}h\left(\frac{\shift{A}\bQ/|Q_{\bk}|}{\|\bQ\|/|Q_{\bk}|}\right)\frac{\|\bQ\|^{\alpha}/|Q_{\bk}|^{\alpha}}{\tsum_{\bj\in \Z^d}|Q_{\bj}|^{\alpha}/|Q_{\bk}|^{\alpha}}|Q_{\bk}|^{\alpha}\right] \\
&= \ex\left[h\left(\shift{A}\bQ\right)\frac{\tsum_{\bk \in \Z^d}|Q_{\bk}|^{\alpha}}{\tsum_{\bj\in \Z^d}|Q_{\bj}|^{\alpha}}\right] = \ex\left[h\left(\shift{A}\bQ\right)\right] \, .
\end{align*}
In the third equality we have used (\ref{eq:SI_of_anchorShift}) and in the fourth the fact that $\|\bQ\|=1$ almost surely. Also, note that we have silently used the fact that $\tsum_{\bk \in \Z^d}|Q_{\bk}|^{\alpha}<\infty$ a.s.\ which follows from (\ref{eq:palmSpectral_basic}).
\end{proof}

We can now state an analogue of \Cref{prop:Palm_representatives} for representatives of the anchored spectral tail process; the  proof is omitted.

\begin{proposition}\label{prop:charact_anch_spectral}
Let $\bY$ be a tail process with index $\alpha>0$ and such that $\pr(\bTheta\in l_0)=1$, and let $\bQ=(Q_{\bi})_{\bi\in \Z^d}$ be random element of $l_0$ such that $\|\bQ\|=1$ a.s.\ and $\ex\left[\tsum_{\bk \in \Z^d}|Q_{\bk}|^{\alpha}\right]<\infty$. The following statements are equivalent.
\begin{itemize}
\item[(i)] $\bQ$ is a representative of the anchored spectral tail process.
\item[(ii)] For all measurable $h:l_0\to \R_{+}$,
\begin{align}\label{eq:palmSpectral_basic_2}
\ex[h(\bTheta)]= \frac{1}{\ex\left[\tsum_{\bk \in \Z^d}|Q_{\bk}|^{\alpha}\right]} \ex\left[\tsum_{\bk \in \Z^d}h\left(\frac{\shift{\bk}\bQ}{|Q_{\bk}|}\right)|Q_{\bk}|^{\alpha}\right] \, .
\end{align}
\item[(iii)]  For all measurable $h:l_0\to \R_{+}$ which are shift-invariant,
\begin{align}\label{eq:palmSpectral_variant2_2}
\ex\left[h(\bQ)\right] = \vartheta^{-1} \ex\left[h\left(\frac{\bTheta}{\|\bTheta\|}\right) \cdot \frac{\|\bTheta\|^{\alpha}}{\sum_{\bk \in \Z^d}|\Theta_{\bk}|^{\alpha}}\right] \, .
\end{align} 
\end{itemize}
\end{proposition}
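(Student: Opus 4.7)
The plan is to mirror the three-way chain of \Cref{prop:Palm_representatives}, with \Cref{prop:exc_stat_representation} playing the role that \Cref{prop:Palm1} played there. The key structural observation is that the map
\[
\bx \mapsto \sum_{\bk\in\Z^d} h\!\left(\frac{\shift{\bk}\bx}{|x_{\bk}|}\right) |x_{\bk}|^{\alpha}, \quad \bx\in l_0\setminus\{\bo\},
\]
is shift-invariant on $l_0$ (reindexing $\bk\mapsto\bk-\bj$ preserves the sum), and similarly for the map $\bx\mapsto h(\shift{A}\bx/\|\bx\|)\cdot \|\bx\|^{\alpha}/\sum_{\bj}|x_{\bj}|^{\alpha}$ whenever the outer $h$ is arbitrary (the anchor-shift and the norm-rescaling both kill dependence on the choice of origin and scale). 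Throughout, I would use that a representative is characterized by $\ex[h(\bQ)]=\ex[h(\bQ^A)]$ for all shift-invariant $h$, and the normalization identity $\ex[\sum_{\bk}|Q_{\bk}|^{\alpha}]=\vartheta^{-1}$ coming from (\ref{eq:palmSpectral_basic}) with $h\equiv 1$.

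For (i)$\Rightarrow$(ii): if $\bQ$ is a representative, then \Cref{prop:exc_stat_representation} directly gives
\[
\ex[h(\bTheta)] = \vartheta\, \ex\!\left[\tsum_{\bk}h\!\left(\frac{\shift{\bk}\bQ}{|Q_{\bk}|}\right)|Q_{\bk}|^{\alpha}\right],
\]
and substituting $\vartheta=1/\ex[\sum_{\bk}|Q_{\bk}|^{\alpha}]$ yields (\ref{eq:palmSpectral_basic_2}). Note that (ii) is, in fact, formulated only in terms of the expectation of a shift-invariant functional of $\bQ$, so it is insensitive to the choice of representative and (i) gives it for free.

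For (ii)$\Rightarrow$(iii): I would repeat the computation from the proof of (\ref{eq:palmSpectral_variant2}) verbatim, applying (\ref{eq:palmSpectral_basic_2}) to the auxiliary function
\[
\tilde{h}(\bx) = h\!\left(\frac{\bx}{\|\bx\|}\right)\frac{\|\bx\|^{\alpha}}{\sum_{\bj}|x_{\bj}|^{\alpha}},
\]
where we may omit the $\shift{A}$ inside $h$ because $h$ is assumed shift-invariant. The $|Q_{\bk}|^{\alpha}$ factors in (\ref{eq:palmSpectral_basic_2}) then cancel after normalization, leaving $\ex[h(\bQ)\cdot \sum_{\bk}|Q_{\bk}|^{\alpha}/\sum_{\bj}|Q_{\bj}|^{\alpha}]=\ex[h(\bQ)]$, which is exactly (\ref{eq:palmSpectral_variant2_2}) after multiplying by $\vartheta^{-1}$.

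For (iii)$\Rightarrow$(i): applying (\ref{eq:palmSpectral_variant2}) (which holds for the canonical representative $\bQ^A$ produced from $\bZ^A$) to an arbitrary shift-invariant $h$ gives
\[
\ex[h(\bQ^A)] = \ex[h(\shift{A}\bQ^A)] = \vartheta^{-1} \ex\!\left[h\!\left(\tfrac{\bTheta}{\|\bTheta\|}\right)\tfrac{\|\bTheta\|^{\alpha}}{\sum_{\bk}|\Theta_{\bk}|^{\alpha}}\right],
\]
using shift-invariance of $h$ on the left. Comparing with (iii) yields $\ex[h(\bQ)]=\ex[h(\bQ^A)]$ for all shift-invariant $h$, which is the definition of $\bQ$ being a representative. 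The main subtlety to be careful about is this constant shuffling between shift-invariant and non-shift-invariant functionals: (ii) is a statement about $\bQ$ through a shift-invariant functional, while (iii) requires testing against all shift-invariant $h$, and all directions work precisely because every appearance of $\bQ$ in both identities enters through a shift-invariant expression.
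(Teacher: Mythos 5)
Your proof is correct and follows exactly the route the paper intends (it omits the proof as the direct analogue of \Cref{prop:Palm_representatives}): (i)$\Rightarrow$(ii) from \Cref{prop:exc_stat_representation} and shift-invariance of the moving-sum functional, (ii)$\Rightarrow$(iii) by the same substitution as in the proof of (\ref{eq:palmSpectral_variant2}), and (iii)$\Rightarrow$(i) by comparison with (\ref{eq:palmSpectral_variant2}) applied to $\bQ^A$. The only point worth making explicit in (ii)$\Rightarrow$(iii) is that the computation yields the constant $\ex\left[\tsum_{\bk}|Q_{\bk}|^{\alpha}\right]$ rather than $\vartheta^{-1}$, and the identification of the two follows by taking $h\equiv 1$ together with (\ref{eq:extremal_repre_spectral}).
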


Our final result shows that exceedance-stationarity and spectral decomposition of at least one representative of the anchored process characterizes the class of tail processes with finitely many exceedance-points. This gives another, arguably more probabilistic, view on  this class of tail processes.  

\begin{proposition}\label{prop:charact}
Let $\bQ$ be an arbitrary random element of $l_0$ satisfying $\|\bQ\|=1$ a.s.\ and $\ex\left[\tsum_{\bk \in \Z^d}|Q_{\bk}|^{\alpha}\right]<\infty$ for some $\alpha>0$. Define $\bZ:=Y\cdot \bQ$ where $Y$ is a $\mathrm{Pareto}(\alpha)$-distributed random variable independent of $\bQ$. 

Then $\ex\abs{\e(\bZ)}<\infty$  and the exceedance-stationary process generated by $\bZ$, that is a random element $\bY$ of $\espace$ with distribution satisfying (\ref{eq:tail_palm_construction}), 
is the tail process of some stationary regularly varying random field with tail index $\alpha$.
\end{proposition}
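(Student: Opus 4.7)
The plan is to reduce the proposition to Corollary \ref{cor:tail_pr_charact} via Proposition \ref{prop:exc_stat_constr}. That is, I will show that the process $\bY$ defined by (\ref{eq:tail_palm_construction}) is exceedance-stationary with $|Y_{\bo}|$ being $\mathrm{Pareto}(\alpha)$-distributed and independent of the corresponding $\bTheta=\bY/|Y_{\bo}|$; by \Cref{cor:tail_pr_charact} this is enough to conclude.

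First, since $Y$ is $\mathrm{Pareto}(\alpha)$ and independent of $\bQ$, for each $\bk\in\Z^d$ one has $\pr(Y|Q_{\bk}|>1\mid \bQ)=|Q_{\bk}|^{\alpha}$ (using $|Q_{\bk}|\le\|\bQ\|=1$). Hence by Fubini
\[
\ex\abs{\e(\bZ)} = \ex\Big[\tsum_{\bk\in\Z^d}\1{\{Y|Q_{\bk}|>1\}}\Big] = \ex\Big[\tsum_{\bk\in\Z^d}|Q_{\bk}|^{\alpha}\Big] <\infty,
\]
so the right-hand side of (\ref{eq:tail_palm_construction}) indeed defines a probability measure on $\espace$. \Cref{prop:exc_stat_constr} then immediately yields the first half: $\bY$ is exceedance-stationary and $\bZ$ is a representative of its anchored process.

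The key step is verifying the spectral decomposition of $\bY$. Fix a measurable $g:(1,\infty)\times l_0\to\R_+$ and apply (\ref{eq:tail_palm_construction}) to $h(\bx):=g(|x_{\bo}|,\bx/|x_{\bo}|)$ on $\espace$. Since $\bZ=Y\bQ$, we have $|Z_{\bk}|=Y|Q_{\bk}|$ and $\shift{\bk}\bZ/|Z_{\bk}|=\shift{\bk}\bQ/|Q_{\bk}|$ whenever $|Q_{\bk}|>0$, so
\[
\ex[g(|Y_{\bo}|,\bTheta)] = \frac{1}{\ex\abs{\e(\bZ)}}\ex\Big[\tsum_{\bk\in\Z^d} g\big(Y|Q_{\bk}|,\shift{\bk}\bQ/|Q_{\bk}|\big)\1{\{Y|Q_{\bk}|>1\}}\Big].
\]
Conditioning on $\bQ$ and making the substitution $z=y|Q_{\bk}|$ in each inner integral (analogously to the calculation in the proof of \Cref{prop:exc_stat_representation}) turns the $k$th summand into $|Q_{\bk}|^{\alpha}\int_{1}^{\infty} g(z,\shift{\bk}\bQ/|Q_{\bk}|)\,\alpha z^{-\alpha-1}\,\mathrm{d}z$. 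Introducing an independent $\mathrm{Pareto}(\alpha)$ random variable $Y'$ to encode the $z$-integral, we obtain
\[
\ex[g(|Y_{\bo}|,\bTheta)] = \frac{1}{\ex\abs{\e(\bZ)}}\ex\Big[\tsum_{\bk\in\Z^d}|Q_{\bk}|^{\alpha}\,g\big(Y',\shift{\bk}\bQ/|Q_{\bk}|\big)\Big].
\]
The $z$-integral factor is independent of everything else, so specializing to product functions $g(a,b)=g_1(a)g_2(b)$ splits the right-hand side into $\ex[g_1(Y')]$ times a term depending only on $\bQ$ and $g_2$. Taking $g_1\equiv 1$ identifies that second factor as $\ex[g_2(\bTheta)]$, whence $\ex[g_1(|Y_{\bo}|)g_2(\bTheta)]=\ex[g_1(Y')]\,\ex[g_2(\bTheta)]$. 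This proves that $|Y_{\bo}|$ is $\mathrm{Pareto}(\alpha)$-distributed and independent of $\bTheta$, completing the verification of the hypotheses of \Cref{cor:tail_pr_charact} and hence the proof.

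The only real obstacle is bookkeeping the change of variables in step 3 correctly, in particular handling summands with $|Q_{\bk}|=0$ (trivially zero because of the indicator) and keeping the Pareto tail bound $1$ consistent throughout; everything else is a direct invocation of already established results.
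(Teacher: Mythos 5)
Your proposal is correct and follows essentially the same route as the paper: establish $\ex\abs{\e(\bZ)}<\infty$ from the Pareto/independence structure, get exceedance-stationarity from \Cref{prop:exc_stat_constr}, and then verify the spectral decomposition of $\bY$ by applying (\ref{eq:tail_palm_construction}) to $h(\bx)=g(|x_{\bo}|,\bx/|x_{\bo}|)$ and repeating the change-of-variables computation from the derivation of (\ref{eq:polar_decomposition_Z}), before concluding via \Cref{cor:tail_pr_charact}. The only cosmetic difference is that you spell out the product-function factorization argument for independence, which the paper leaves implicit.
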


\begin{remark}
One possible construction of such a stationary random field (directly from $\bQ$) can be done using \cite[Definition 2.15 and Theorem 3.7]{dombry:hashorva:soulier:2018}; we omit the details. 
\end{remark}

\begin{remark}\label{rem:charact}
The proof below gives the following variant of \Cref{prop:charact}: If $\bQ$ is an arbitrary random element of $l_0$ satisfying $\|\bQ\|=1$ a.s.\ and $\ex\left[\tsum_{\bk \in \Z^d}|Q_{\bk}|^{\alpha}\right]<\infty$ for some $\alpha>0$, the random element $\bTheta$ with distribution satisfying (\ref{eq:palmSpectral_basic_2}) is the spectral tail process of some stationary regularly varying random field with tail index $\alpha$.
\end{remark}

\begin{proof}[Proof of \Cref{prop:charact}]
Since $Y$ is a $\mathrm{Pareto}(\alpha)$-distributed and independent of $\bQ$, 
$$\ex\abs{\e(\bZ)}=\ex\left[\tsum_{\bk \in \Z^d}\ind{|Z_{\bk}|>1}\right]=\ex\left[\tsum_{\bk \in \Z^d}|Q_{\bk}|^{\alpha}\right]<\infty\, .$$
By \Cref{cor:tail_pr_charact}, it remains to show that the exceedance-stationary process $\bY$ generated by $\bZ$ admits the spectral decomposition, i.e., that $|Y_{\bo}|$ is $\mathrm{Pareto}(\alpha)$-distributed and independent of $\bTheta:=\bY/|Y_{\bo}|$. Using (\ref{eq:tail_palm_construction}) and arguing exactly as in derivation of (\ref{eq:polar_decomposition_Z}) one obtains
\begin{align*}
\ex[g(|Y_{\bo}|,\bTheta)] &=\vartheta \ex\left[\sum_{\bk\in \Z^d} g\left(|Z_{\bk}|, \frac{\shift{\bk}\bZ}{|Z_{\bk}|}\right) \ind{|Z_{\bk}|>1}\right] = \vartheta \ex\left[\sum_{\bk \in \Z^d}g\left(Y, \frac{\shift{\bk}\bQ}{|Q_{\bk}|}\right)|Q_{\bk}|^{\alpha}\right] \, ,
\end{align*} 
for every  measurable $g:(1,\infty)\times l_0\to \R_{+}$. 
Thus, $|Y_{\bo}|$ is $\mathrm{Pareto}(\alpha)$-distributed, $\bTheta$ satisfies  (\ref{eq:palmSpectral_basic_2}), and consequently $\bTheta$ and $|Y_{\bo}|$ are independent, as desired.
\end{proof}

\section{Asymptotics for clusters of extremes}\label{sub:randomized_origin}
\label{sec:randomized_origin}

\subsection{Limiting behavior of a single cluster}
Assume that $\bY=(Y_{\bi})_{\bi\in\Z^d}$ is the tail process of a stationary random field $\bX=(X_{\bi})_{\bi\in\Z^d}$ with $d\in \N$. 
\begin{proposition}\label{prop:typical_cluster}
Let $(c_n)_{n\in\N}\subseteq (0,\infty)$ and $(r_n)_{n\in\N}\subseteq \N$ be sequences  satisfying 
$c_n,r_n\to \infty$ and $r_n^d\pr(|X_{\bo}|>c_n)\to 0$, 
as $n\toi$. Assume that
\begin{align}\label{eq:AC}
\lim_{m\toi}\limsup_{n\toi}\pr\left(\max_{m\le |\bi|\leq r_n}|X_{\bi}|>c_n x\; \Big| \; |X_{\bo}|>c_n y\right)=0 \, , \, x,y>0 \, .
\end{align}
Then $\pr(\bY\in l_0)=1$ and the following convergences as $n\toi$ hold in $l_0$ with respect to the uniform norm $\|\cdot\|$:
\begin{align}\label{eq:tail_process_cluster}
\pr \big( (c_n^{-1}X_{\bi})_{\bi \in \{-r_n,\dots,r_n\}^d} \in \cdot
\,\, \big| \, |X_{\bo}|> c_n \big) \wto 
\pr \big(\bY \in \cdot \: \big) \, ,
\end{align}
and 
\begin{align}\label{eq:anchored_process_cluster}
\pr \big( \shift{A} (c_n^{-1}X_{\bi})_{\bi \in \{1,\dots,r_n\}^d} \in \cdot
\,\, \big| \, \textstyle\max_{\bi \in \{1,\dots,r_n\}^d}|X_{\bi}|> c_n \big) \wto 
\pr \big(\bZ^A \in \cdot \: \big) \, ,
\end{align}
for every anchoring function $A:l_0\to \Z^d$ such that $\pr(\bY\in \mathrm{disc}\, A)=0$ where $\mathrm{disc}\, A$ is the set of all discontinuity points of $A$.
\end{proposition}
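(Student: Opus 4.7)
The plan is to prove the three conclusions in sequence: $\bY \in l_0$ a.s., the tail-process convergence \eqref{eq:tail_process_cluster}, and the anchored-cluster convergence \eqref{eq:anchored_process_cluster} via a Palm-type decomposition in the spirit of \Cref{prop:Palm1}. The hypothesis \eqref{eq:AC} plays a dual role: it upgrades the finite-dimensional convergence \eqref{eq:conv_to_tail} to convergence in the uniform norm on $l_0$, and it renders the ``boundary'' contributions in the anchored decomposition asymptotically negligible.

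First I would verify that $\bY\in l_0$ a.s.\ and establish \eqref{eq:tail_process_cluster}. For fixed $m < M$, \eqref{eq:conv_to_tail} gives $\pr(\max_{m\le|\bi|\le M}c_n^{-1}|X_{\bi}| > x \mid |X_{\bo}|>c_n) \to \pr(\max_{m\le|\bi|\le M}|Y_{\bi}| > x)$ at continuity points $x$; bounding the left-hand side by the same probability with $r_n$ in place of $M$ and letting first $n\toi$, then $M\toi$, then $m\toi$, \eqref{eq:AC} with $y=1$ forces $\pr(\limsup_{|\bi|\toi}|Y_{\bi}|>x)=0$ for every $x>0$ and hence $\pr(\bY\in l_0)=1$. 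The same control delivers \eqref{eq:tail_process_cluster}: viewing the conditional distribution on the left-hand side as a zero-padded element of $l_0$ and approximating any bounded Lipschitz $f:l_0\to\R$ by $f_m(\bx):=f(\bx\cdot \1{|\bi|\le m})$, the bound $|f(\bx)-f_m(\bx)|\le \|f\|_{\mathrm{Lip}}\sup_{|\bi|>m}|x_{\bi}|$ combined with \eqref{eq:AC} controls the truncation error uniformly in $n$ as $m\toi$, while \eqref{eq:conv_to_tail} handles each $f_m$ for fixed $m$.

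The anchored convergence is the main step. Set $\bX_n:=(c_n^{-1}X_{\bi})_{\bi\in\{1,\dots,r_n\}^d}$, viewed as zero-padded in $l_0$. Shift-covariance of $A$ gives $\{A(\bX_n)=\bk\}=\{A(\shift{\bk}\bX_n)=\bo\}$, on which $\shift{A}\bX_n=\shift{\bk}\bX_n$, and the event $\{\max_{\bi\in\{1,\dots,r_n\}^d}|X_{\bi}|>c_n\}$ partitions as $\bigsqcup_{\bk}\{A(\bX_n)=\bk\}$. Hence for bounded continuous $f:l_0\to\R$,
\[\ex[f(\shift{A}\bX_n)\inds{\max>c_n}]=\sum_{\bk\in\{1,\dots,r_n\}^d}\ex[f(\tilde{\bX}_n^{(\bk)})\inds{A(\tilde{\bX}_n^{(\bk)})=\bo}],\]
where stationarity of $\bX$ was used to replace $\shift{\bk}\bX_n$ by $\tilde{\bX}_n^{(\bk)}\in l_0$, the zero-padded scaled sequence supported on $\{1-\bk,\dots,r_n-\bk\}^d$. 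Since $A(\cdot)=\bo$ forces $|X_{\bo}|>c_n$, each summand factors as $\pr(|X_{\bo}|>c_n)$ times the corresponding conditional expectation. For $\bk$ in the bulk --- those with $\min_j\min(k_j,r_n-k_j)\toi$ --- the shifted box exhausts $\Z^d$, and the argument of the previous paragraph (now applied to non-symmetric growing boxes) yields $\tilde{\bX}_n^{(\bk)}\mid\{|X_{\bo}|>c_n\}\wto \bY$ in $l_0$. The continuous mapping theorem, applicable because $\pr(\bY\in\mathrm{disc}\,A)=0$ makes $\bx\mapsto f(\bx)\inds{A(\bx)=\bo}$ almost surely continuous along $\bY$, identifies each bulk summand asymptotically as $\pr(|X_{\bo}|>c_n)\cdot\ex[f(\bY)\inds{A(\bY)=\bo}]=\pr(|X_{\bo}|>c_n)\cdot\vartheta\,\ex[f(\bZ^A)]$. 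Boundary $\bk$'s have cardinality $o(r_n^d)$ and contribute negligibly. Specializing to $f\equiv 1$ yields $\pr(\max>c_n)\sim \vartheta\,r_n^d\pr(|X_{\bo}|>c_n)$, and dividing gives \eqref{eq:anchored_process_cluster}.

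The \textbf{main obstacle} is keeping the $o(1)$ correction uniform over bulk $\bk$'s, since the outer sum has $r_n^d$ terms each with vanishing error. The standard remedy is an intermediate truncation at a fixed $M$: apply \eqref{eq:conv_to_tail} on the fixed box $\{-M,\dots,M\}^d$, obtaining convergence genuinely uniform over all bulk $\bk$'s whose shifted box contains $\{-M,\dots,M\}^d$, then control the remainder via \eqref{eq:AC} and let $M\toi$. A related subtle point is the continuous-mapping step: the discontinuity set of $\bx\mapsto\inds{A(\bx)=\bo}$ is contained in the discontinuity set of $A$ (since $\Z^d$ is discrete, continuity of $A$ at $\bx$ forces $A$ to be locally constant there), and is therefore $\bY$-null by assumption.
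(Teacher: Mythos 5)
Your proposal is correct, but it takes a more self-contained route than the paper, which proves this proposition largely by reduction to cited results. For $\pr(\bY\in l_0)=1$ and (\ref{eq:tail_process_cluster}) the paper simply invokes \cite[Theorem 6.1.4]{kulik:soulier:2020}; your truncation argument (portmanteau on the fixed window, uniform control of the tail window via (\ref{eq:AC})) is essentially that proof written out, so nothing is gained or lost there. The real divergence is in (\ref{eq:anchored_process_cluster}). The paper starts from \cite[Equation (3.15)]{basrak:planinic:2020}, which gives the cluster limit only for \emph{shift-invariant} functionals $h$ with $\pr(\bZ\in\mathrm{disc}\,h)=0$, and then upgrades it to arbitrary anchors by observing that $h:=g\circ\shift{A}$ is shift-invariant with $\mathrm{disc}\,h\subseteq\mathrm{disc}\,A$, using \Cref{cor:palm}(iii) to transfer the null-set hypothesis from $\bY$ to $\bZ$. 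You instead re-derive the cluster limit from first principles by partitioning $\{\max_{\bi}|X_{\bi}|>c_n\}$ according to the location of the anchor, recentering by stationarity, and identifying each bulk summand via the tail-process convergence and the identity $\ex[f(\bY)\ind{A(\bY)=\bo}]=\vartheta\,\ex[f(\bZ^A)]$; this is in effect a generalization of the first-exceedance decomposition underlying \cite[Proposition 3.8]{basrak:planinic:2020} to an arbitrary anchoring function. Your route buys independence from the external reference and lets you apply the continuous-mapping step directly at $\bY$ (so you never need to transfer the $\mathrm{disc}\,A$ null-set condition to $\bZ$), at the cost of having to handle the uniformity of the $o(1)$ errors over the $r_n^d$ bulk positions --- which you correctly identify as the crux and resolve by the standard fixed-$M$ truncation combined with (\ref{eq:AC}); note that for bulk $\bk$ the restriction of the recentered block to $\{-M,\dots,M\}^d$ is literally the same random vector for every such $\bk$, which is what makes the uniformity work. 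The paper's route is shorter because all of this technical work is absorbed into the citation.
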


\begin{remark}

\begin{itemize}
\item[(i)] Finite blocks of $\bX$ are embedded in $l_0$ by adding infinitely many zeros on the positions around the hypercubes $\{-r_n,\dots,r_n\}^d$ and $\{1,\dots,r_n\}^d$.
\item[(ii)] Due to the spectral decomposition of the tail process, $\pr(\bY\in \mathrm{disc}\, A)=0$ always holds when $A=A^{fe}$ is the first exceedance anchoring function.
\item[(iii)] Condition (\ref{eq:AC}) is usually called the \textit{anticlustering} or \textit{finite mean cluster size} condition.
\end{itemize}
\end{remark}

The proofs of $\pr(\bY\in l_0)=1$ and (\ref{eq:tail_process_cluster}) can be found in \cite[Theorem 6.1.4]{kulik:soulier:2020}; even though only the case $d=1$ is considered, the proof directly extends to general $d\in \N$. The convergence in (\ref{eq:anchored_process_cluster}) is a slight extension of~\cite[Equation (3.15)]{basrak:planinic:2020} which essentially says that, for an arbitrary anchoring function $A$,  convergence in (\ref{eq:anchored_process_cluster}) holds for shift-invariant subsets of $l_0$ -- this is formulated as convergence in the quotient space of $l_0$ modulo shift-equivalence; the proof of (\ref{eq:anchored_process_cluster}) is given in \Cref{sec:appendix}. 

Let us now, at least informally, identify a \textit{cluster of extremes} of $\bX$ with the block $(X_{\bi})_{\bi\in I_n}$ of observations containing at least one exceedance-point -- that is, at least one $\bi$ such that $|X_{\bi}|>c_n$, where $I_n\subseteq \Z^d$ are hypercubes with sides proportional to $r_n$, cf.~\cite[Section 6.1]{kulik:soulier:2020}. By (\ref{eq:tail_process_cluster}), the distribution of the  tail process can then be seen as the asymptotic distribution of the (rescaled) cluster $(X_{\bi})_{\bi\in I_n}$  which contains, and is centered around, a typical exceedance-point of $\bX$. 
\tcb{On the other hand,  under the terminology introduced in \Cref{sec:intro}, because of (\ref{eq:anchored_process_cluster}) we refer to the distribution of the anchored tail process as the  asymptotic distribution of  a typical cluster of extremes of $\bX$.}


\subsection{\tcb{Poisson approximation for clusters of extremes}}

For notational simplicity assume from now on that $d=1$. Divide $X_1,\dots, X_n$ into disjoint blocks
\begin{align*}
\bX_{n,j}=(X_{(j-1)r_n +1}, \dots, X_{jr_n} ) \, , \, j\in \{1,\dots, k_n\} \, , \, 
\end{align*}
of size $r_n$, where $k_n=\lfloor \tfrac{n}{r_n}\rfloor$ is the total number of blocks.  Assume that $c_n,r_n\to \infty$, $n\pr(|X_0|>c_n)\to 1$, and $r_n\pr(|X_{0}|>c_n)\sim r_n/n\to 0$. Observe that by regular variation of $|X_0|$, for every $\epsilon>0$,
\begin{align*}
\ex\Big[\sum_{i=1}^n \1{\{|X_i|> c_n\epsilon\}}\Big] = n\pr(|X_0|>c_n\epsilon) \to \epsilon^{-\alpha}:= \tau_{\epsilon} \, ,
\end{align*}
that is, the asymptotic expected number of exceedance-points of $X_1,\dots, X_n$ (over $c_n\epsilon$) is $\tau_\epsilon\in (0,\infty)$. 

Finally, in addition to (\ref{eq:AC}) assume the mixing condition on the blocks $\bX_{n,j}$ , $j=1,\dots, k_n$ given in \cite[Theorem 3.9]{basrak:planinic:2020} -- intuitively, for every fixed $\epsilon>0$, blocks containing at least one exceedance over $c_n \epsilon$ (i.e.,  clusters of extremes) asymptotically behave as if they were independent.

Then \cite[Theorem 3.9]{basrak:planinic:2020} implies that for every $\epsilon>0$ and every anchoring function $A$ such that $\pr(\bY\in \mathrm{disc}\, A)=0$, as $n\toi$,
\begin{align}\label{eq:Poisson_approx_for_clusters}
\sum_{j=1}^{k_n} \delta_{\shift{A}(\bX_{n,j}/c_n\epsilon)} \1{\{\|\bX_{n,j}\|>c_n\epsilon\}} \dto \sum_{j= 1}^{K_\epsilon} \delta_{\bZ^A_j} \, ,
\end{align}
in the space of finite point measures on the Polish space $(l_0,\|\cdot\|$)\footnote{The topology is the weak topology, while $\delta_x$ denotes the Dirac measure concentrated at $x\in l_0$.}, where 
\begin{itemize}
\item[--] $K_\epsilon$ is Poisson distributed with parameter $\vartheta\tau_\epsilon$, and
\item[--] $(\bZ_j^A)_{j\in \N}$ an iid sequence of random elements in $l_0$ distributed as $\bZ^A$, and independent of $K_\epsilon$.
\end{itemize}
Thus, high-level exceedances of $X_1,\dots, X_n$ asymptotically group into Poisson number of i.i.d.\ clusters with common distribution equal to the distribution of the anchored tail process (i.e., the typical cluster).

We note that convergence (\ref{eq:Poisson_approx_for_clusters}) follows from \cite[Theorem 3.9]{basrak:planinic:2020} (and \Cref{lem:polar_decomposition_Z}(iv)) similarly as (\ref{eq:anchored_process_cluster}) follows from \cite[Equation (3.15)]{basrak:planinic:2020}, so we will omit the details.
\subsection{\tcb{Tail process as the cluster containing a randomly chosen exceedance-point}}\label{subs:randomly_choosing}

Assume now that $n\pr(|X_{0}|>c_n)\to \infty$ and $r_n\pr(|X_{0}|>c_n)\to 0$.
Further, let conditionally on $X_1,\dots, X_n$,
\begin{enumerate}
\item $K_n$ be uniform on the set $N_n^c:=\{j\in \{1,\dots,k_n\} : \|\bX_{n,j}\|>c_n\}$; thus, $\bX_{n,K_n}$ is  a uniformly (or randomly) chosen cluster of extremes of $X_1,\dots,X_n$.
\item $T_n$ be uniform on the set $N_n^{e}:=\{i\in \{1,\dots,n\} : |X_{i}|>c_n\}$; thus, if $\bX_n^{T_n}:=\bX_{n,j}$ where $\bX_{n,j}$ is the block which contains $X_{T_n}$, $\bX_n^{T_n}$ represents the cluster containing a uniformly chosen exceedance-point of  $X_1,\dots,X_n$.\footnote{Observe that $\bX_n^{T_n}$ is not well-defined if  $T_n\in \{k_n r_n +1,\dots,n \}$. However, since 
$$\pr(T_n\in \{k_n r_n +1,\dots,n \}) \leq r_n\pr(|X_0|>c_n)\to 0\, , \; \;  \text{as } n\toi \, , $$ 
we can and will neglect this edge effect.} 
\end{enumerate}

Finally, in addition to (\ref{eq:AC}) again assume that the blocks $\bX_{n,j}$ , $j=1,\dots, k_n$ satisfy a suitable mixing condition, namely the one given in \cite[Condition (10.1.2)]{kulik:soulier:2020}:
\begin{align}\label{eq:empirical_conv_mixing_cond}
\ex\big[e^{-\frac{1}{n\pr(|X_0|>c_n)}\sum_{j=1}^{k_n} h(c_n^{-1}\bX_{n,j} )}\big] - \ex\big[e^{-\frac{1}{n\pr(|X_0|>c_n)} h(c_n^{-1}\bX_{n,1} )}\big]^{k_n} \to 0 \, , \;\; \text{ as } \; n\toi \, ,
\end{align}
for every $h:l_0\to \R_{+}$ which is bounded, shift-invariant and Lipschitz continuous, and with support being a subset of $\{\bx\in l_0 : \|\bx\|>\epsilon\}$ for some $\epsilon>0$. 

\begin{proposition}\label{prop:randomized_origin_cluster}
Under the above assumptions, as $n\toi$,
\begin{align}\label{eq:number_of_clusters_to_infty}
\frac{|N_n^c|}{\vartheta n\pr(|X_{0}|>c_n)} \Pto 1 \, ,
\end{align}
and 
\begin{align}\label{eq:number_of_exc_to_infty}
\frac{|N_n^e|}{ n\pr(|X_{0}|>c_n)} \Pto 1 \, ,
\end{align}
so in particular  $|N_n^c|, |N_n^e|\Pto +\infty$.\footnote{\enquote{$\Pto$} denotes convergence in probability.}

Moreover, for every anchoring function $A$ such that $\pr(\bY\in \mathrm{disc}\, A)=0$,
\begin{align}\label{eq:randomized_cluster}
\pr(\shift{A}(c_n^{-1} \bX_{n, K_n}) \in \cdot\, ) \wto \pr(\bZ^A \in \cdot \,) \, , 
\end{align}
and
\begin{align}\label{eq:randomized_origin}
\pr(c_n^{-1} \shift{T_n}\bX_{n}^{T_n} \in \cdot\, ) \wto 
\pr(\bY\in \cdot \,) \, , 
\end{align} 
in $l_0$ with respect to the uniform norm $\|\cdot\|$.
\end{proposition}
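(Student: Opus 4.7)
All four conclusions reduce to a single weak law of large numbers (WLLN) for block functionals: writing $v_n:=n\pr(|X_0|>c_n)$, I claim that for a sufficiently rich class of shift-invariant $h:l_0\to\R_{+}$,
\begin{align*}
\frac{1}{v_n}\sum_{j=1}^{k_n} h(c_n^{-1}\bX_{n,j}) \Pto \ell_h, \qquad \ell_h:=\lim_{n\toi} \frac{k_n\,\ex[h(c_n^{-1}\bX_{n,1})]}{v_n}\, .
\end{align*}
The expansion $\exp(-u/v_n)=1-u/v_n+O(u^2/v_n^2)$ inserted into (\ref{eq:empirical_conv_mixing_cond}) turns the mixing identity into convergence of the Laplace transform of the block sum to $\exp(-\ell_h)$, and since the limit is deterministic this yields convergence in probability. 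Condition (\ref{eq:empirical_conv_mixing_cond}) directly covers bounded Lipschitz $h$ supported on $\{\|\bx\|>\epsilon\}$; this class is enlarged to the indicator $h=\1{\|\cdot\|>1}$ by a sandwich approximation (using $\pr(\|\bX_{n,1}\|>c_n(1\pm\eta))/\pr(\|\bX_{n,1}\|>c_n)\to (1\pm\eta)^{-\alpha}$ and then letting $\eta\to 0$), and to unbounded cluster functionals such as $|\e(\bx)|$ or $g(\bx):=\sum_{\bk\in\e(\bx)} f(\shift{\bk}\bx)$ by truncation, with truncation errors controlled via the anticlustering condition (\ref{eq:AC}).

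\textbf{The in-probability statements.} Since $\ex|\e(c_n^{-1}\bX_{n,1})|=r_n\pr(|X_0|>c_n)$, the convergence (\ref{eq:anchored_process_cluster}), uniform integrability of $|\e|$ under (\ref{eq:AC}), and $\ex|\e(\bZ^A)|=\vartheta^{-1}$ from \Cref{cor:palm}(i) together yield the candidate-extremal-index identity $\pr(\|\bX_{n,1}\|>c_n)\sim \vartheta\, r_n\pr(|X_0|>c_n)$, so that $k_n\pr(\|\bX_{n,1}\|>c_n)\sim\vartheta v_n$. The WLLN with $h=\1{\|\cdot\|>1}$ then gives $\ell_h=\vartheta$, which is (\ref{eq:number_of_clusters_to_infty}); with $h(\bx)=|\e(\bx)|\wedge M$ and letting $M\to\infty$ it gives $\ell_h\uparrow 1$, which is (\ref{eq:number_of_exc_to_infty}).

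\textbf{The two weak convergences.} For bounded continuous $f:l_0\to\R$, the conditional laws of $K_n$ and $T_n$ give
\begin{align*}
\ex[f(\shift{A}(c_n^{-1}\bX_{n,K_n}))] & =\ex\Big[|N_n^c|^{-1}\tsum_{j=1}^{k_n} f(\shift{A}(c_n^{-1}\bX_{n,j}))\1{\|\bX_{n,j}\|>c_n}\Big]\, ,\\
\ex[f(c_n^{-1}\shift{T_n}\bX_n^{T_n})] & =\ex\Big[|N_n^e|^{-1}\tsum_{j=1}^{k_n} g(c_n^{-1}\bX_{n,j})\Big]\, .
\end{align*}
For (\ref{eq:randomized_cluster}), apply the WLLN to the (shift-invariant, thanks to (\ref{eq:SI_of_anchorShift})) integrand $h(\bx)=f(\shift{A}\bx)\1{\|\bx\|>1}$; under $\pr(\bY\in\mathrm{disc}\, A)=0$, the convergence (\ref{eq:anchored_process_cluster}) identifies $\ell_h=\vartheta\ex[f(\bZ^A)]$, and dividing by $|N_n^c|$ (controlled via (\ref{eq:number_of_clusters_to_infty})) yields the limit $\ex[f(\bZ^A)]$ after bounded convergence. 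For (\ref{eq:randomized_origin}), apply the WLLN to a bounded-Lipschitz truncation of $g$ (restricting the range of $\bk$ and capping the number of summands, with truncation errors controlled by (\ref{eq:AC})); stationarity and (\ref{eq:conv_to_tail}) give $\ex[g(c_n^{-1}\bX_{n,1})]\sim r_n\pr(|X_0|>c_n)\ex[f(\bY)]$, so that $\ell_g=\ex[f(\bY)]$, and dividing by $|N_n^e|$ produces the limit $\ex[f(\bY)]$.

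\textbf{Main obstacle.} The chief technical task is extending (\ref{eq:empirical_conv_mixing_cond}) from bounded Lipschitz test functions supported away from $\bo$ to the indicator $\1{\|\bx\|>1}$ and, especially, to the unbounded and partially discontinuous cluster functional $g$. Both extensions rest only on (\ref{eq:AC}) and the regularly varying tail of $\|\bX_{n,1}\|$, and so are conceptually standard but bookkeeping-intensive; everything else in the argument is a direct consequence of the WLLN combined with the basic properties of $\bZ^A$ and $\bY$ already established in the paper.
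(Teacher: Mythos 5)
Your proposal is correct and its skeleton coincides with the paper's: a law of large numbers for shift\--invariant block functionals, extended by truncation (with the error controlled through the anticlustering condition (\ref{eq:AC}) and the convergence (\ref{eq:tail_process_cluster})) to the unbounded cluster functional $\bx\mapsto\sum_{\bk\in\e(\bx)}f(\shift{\bk}\bx)$, followed by division by the random counts $|N_n^c|$, $|N_n^e|$ and bounded convergence. You diverge in two places. First, you re-derive the block LLN from the mixing condition (\ref{eq:empirical_conv_mixing_cond}) by expanding the Laplace functional, whereas the paper invokes \cite[Lemma 10.1.1]{kulik:soulier:2020}, which already delivers
$\frac{1}{n\pr(|X_0|>c_n)}\sum_{j}h(c_n^{-1}\bX_{n,j})\1{\{\|\bX_{n,j}\|>c_n\}}\Pto \vartheta\,\ex[h(\bZ)]$
with the limit expressed through a representative $\bZ$ of the anchored process; your route is more self-contained but obliges you to identify every limiting constant by hand (e.g.\ $k_n\pr(\|\bX_{n,1}\|>c_n)/v_n\to\vartheta$, which you correctly extract from (\ref{eq:anchored_process_cluster}) and \Cref{cor:palm}). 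Second, and more substantively, for (\ref{eq:randomized_origin}) you identify the limit $\ell_g=\ex[f(\bY)]$ directly from stationarity and (\ref{eq:conv_to_tail}), while the paper passes through the Palm inversion formula (\ref{eq:tail_palm_construction}), i.e.\ $\vartheta\,\ex\big[\sum_{\bk\in\e(\bZ)}\phi(\shift{\bk}\bZ)\big]=\ex[\phi(\bY)]$, so that the limit of the truncated functional (already given in terms of $\bZ$ by the cited lemma) is converted to $\bY$ with no further asymptotics. The paper's route is cleaner at this point and makes visible that the size-biasing duality is exactly what produces the tail process as the limit; your direct computation of $\ex[g(c_n^{-1}\bX_{n,1})]\sim r_n\pr(|X_0|>c_n)\,\ex[f(\bY)]$ is also valid but needs the additional (routine) observation that the $O(m)$ indices near the block boundary, for which the observation window around the exceedance is one-sided, contribute negligibly. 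Neither difference is a gap: the truncation-plus-anticlustering control you sketch for the unbounded functional is precisely the estimate the paper carries out, and the extension of the LLN to a.s.-continuous, non-Lipschitz test functions is handled there by the same approximation you propose.
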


Results in (\ref{eq:randomized_cluster}) and (\ref{eq:randomized_origin}) give formal meaning to the statements:
\begin{enumerate}
\item The asymptotic distribution of a typical cluster of extremes of $\bX$ can be obtained by randomly choosing a cluster of extremes of $\bX$.
\item The distribution of the tail process can be seen as the asymptotic distribution of the cluster containing and being centered around a randomly chosen exceedance-point of $\bX$. 
\end{enumerate}

The proof of \Cref{prop:randomized_origin_cluster} is postponed to \Cref{sec:appendix}; it relies on the convergence of empirical cluster process given in \cite[Lemma 10.1.1]{kulik:soulier:2020}.


\begin{remark}\label{rem:mdep_mixing}
If for some $m\in \N$, the stationary regularly varying time series $\bX=(X_{i})_{i\in \Z}$ is $m$-dependent (i.e.\ $\sigma(X_i : i \leq 0)$ and $\sigma(X_i : i\geq m)$ are independent), conditions (\ref{eq:AC}) and \cite[Condition (10.1.2)]{kulik:soulier:2020} are satisfied for all sequences $(c_n)$ and $(r_n)$ such that $c_n,r_n\to \infty$, $\lim_{n\toi} n\pr(|X_{0}|>c_n)=\infty$, and $r_n\pr(|X_{0}|>c_n)\to 0$, see \cite[Lemma 6.1.3 and Theorem 10.1.3]{kulik:soulier:2020}. In particular, (\ref{eq:number_of_clusters_to_infty})-(\ref{eq:randomized_origin}) always hold. See \Cref{exa:MA} below.
\demo
\end{remark}

Finally, note that  (\ref{eq:randomized_origin})  does not necessarily hold if the mixing assumption is not satisfied. This is illustrated on a very simple model in \Cref{exa:MA_counter} below.  

\section{Moving averages with random coefficients and regularly varying innovations}\label{sec:MA}

Let $(Z_{\bi})_{\bi\in \Z^d}$ be an $\R$-valued random field of i.i.d.\ regularly varying random variables, that is, for some $\alpha>0$ and $p\in [0,1]$,  
\begin{align}\label{eq:Z_RV}
\lim_{u\toi}\frac{\pr(|Z_{\bo}|>uy)}{\pr(|Z_{\bo}|>u)}=y^{-\alpha} \, , \, y>0 \, \quad	\text{ and } \quad
\lim_{u\toi}\frac{\pr(Z_{\bo}>u)}{\pr(|Z_{\bo}|>u)} = p \, .
\end{align}
If $\alpha>1$, assume that $\ex[Z_{\bo}]=0$. 

Further, let for each $\bi\in \Z^d$, $\bC^{(\bi)}=(C_{\bi,\bk})_{\bk\in \Z^d}$ be an $\R$-valued random field over $\Z^d$, and assume that 
\begin{align}\label{eq:C_stat}
(\bC^{(\bi+\bk)})_{\bi\in \Z^d} \eind (\bC^{(\bi)})_{\bi\in \Z^d} \, , \, \text{ for all } \bk\in \Z^d \, .
\end{align}
To avoid the trivial case, we assume that $\pr(C_{\bo,\bk}=0 \text{  for all } \bk\in \Z^d)=0$.

Define the \textit{moving average} process $\bX=(X_{\bi})_{\bi\in\Z^d}$ as the (stationary) random field given by
\begin{align}\label{eq:MA_process}
X_{\bi}=\sum_{\bk\in \Z^d} C_{\bi,\bk} Z_{\bi-\bk} \, , \, \bi \in \Z^d \, .
\end{align}
\tcb{If we assume that $(\bC^{(\bi)})_{\bi\in \Z^d}$ and $(Z_{\bi})_{\bi\in \Z^d}$ are independent, and that the field $\bC^{(\bo)}=(C_{\bo,\bk})_{\bk\in \Z^d}$ satisfies the following moment condition: there exists $\epsilon\in (0,\alpha)$ such that
\begin{align*}
\tsum_{\bk \in \Z^d} \ex \big[ |C_{\bo,\bk}|^{\alpha-\epsilon} + C_{\bo,\bk}|^{\alpha+\epsilon} \big] <\infty  \,, \quad & \text{if } \alpha\in (0,1)\cup (1,2)  , \\
\ex \Big[ \big(\tsum_{\bk \in \Z^d}  |C_{\bo,\bk}|^{\alpha-\epsilon} \big)^{\frac{\alpha+\epsilon}{\alpha-\epsilon}} \Big] <\infty \,, \quad & \text{if } \alpha\in \{1,2\}  , \\
\ex \Big[ \big(\tsum_{\bk \in \Z^d}  |C_{\bo,\bk}|^{\alpha-\epsilon} \big)^{\frac{\alpha+\epsilon}{\alpha-\epsilon}} \Big] <\infty \,, \quad & \text{if } \alpha\in (2,\infty)  ,
\end{align*}
then (i) the series in (\ref{eq:MA_process}) converges a.s.\, and (ii) $X_{\bo}$ is again regularly varying with the same tail index $\alpha$; see Hult and Samorodnitsky~\cite[Theorem 3.1]{hult:samorodnitsky:2008}.}

\begin{remark}\label{rem:moment_assump_mdep}
If $\pr(C_{\bo,\bk}=0 \text{  for all } |\bk|\geq m)=1$ for some $m\in \N$ (as is the case in \Cref{exa:MA} below), the above moment condition is equivalent to the existence of $\epsilon>0$ such that
\begin{align*}
\sum_{|\bk|<m} \ex \big[ |C_{\bo,\bk}|^{\alpha+\epsilon} \big] <\infty \, .
\end{align*}
Moreover, in this case one can also allow that $\ex[Z_{\bo}]\neq 0$ if $\alpha>1$, see \cite[Remark 3.2]{hult:samorodnitsky:2008}.
\demo
\end{remark}

The above moment condition in particular ensures that 
\begin{align*}
c:=\sum_{\bk\in \Z^d} \ex \big[|C_{\bo,\bk}|^{\alpha}\big] <\infty \, .
\end{align*}
Moreover, by \cite[Theorem 15.1.2]{kulik:soulier:2020} the random field $\bX$ admits a tail process $\bY$ which has tail index $\alpha$ and whose spectral tail process $\bTheta$ satisfies
\begin{align}\label{eq:MA_spectral}
\ex[h(\bTheta)]=\frac{1}{c}\sum_{\bk\in \Z^d}\ex\Big[h\Big(\frac{(C_{\bi,\bi+\bk})_{\bi\in \Z^d} \cdot \kappa}{|C_{\bo,\bk}|}\Big)\cdot |C_{\bo,\bk}|^{\alpha} \Big] \, ,
\end{align}
for all measurable $h:\R^{\Z^d}\to\R_{+}$, where $\kappa$ is a $\{-1,1\}$-valued random variable independent of everything and satisfying $\pr(\kappa=1)=p$ with $p$ the same as in (\ref{eq:Z_RV}); \cite[Theorem 15.1.2]{kulik:soulier:2020} is proved for the case $d=1$ but its extension to the case $d\in \N$ is straightforward. Note that the $\bk$th summand on the right hand side above is understood to be 0 if $C_{\bo,\bk}=0$.

In order to to understand the extremal behavior of $\bX$ (i.e.\ determine the distribution of the (anchored) tail process) the following observation is crucial. 
By (\ref{eq:C_stat}), 
\begin{align*}
(C_{\bi,\bi+\bk})_{\bi\in \Z^d} \eind (C_{\bi+\bk,\bi+\bk})_{\bi\in \Z^d} \, , \, \text{ for all } \bk\in \Z^d \, .
\end{align*}
Thus, if $\bC=(C_{\bi})_{\bi\in\Z^d}$ is the random field of \enquote{diagonal} coefficients
\begin{align}\label{eq:C_new}
C_{\bi}=C_{\bi,\bi} \, , \, \bi \in \Z^d \, ,
\end{align}
one can rephrase (\ref{eq:MA_spectral}) as
\begin{align}\label{eq:MA_spectral_2}
\ex[h(\bTheta)]=\frac{1}{c}\ex\Big[ \tsum_{\bk\in \Z^d}h\Big(\frac{\shift{\bk}\bC \cdot \kappa}{|C_{\bk}|}\Big)\cdot |C_{\bk}|^{\alpha} \Big] \, ,
\end{align}
for all measurable $h:\R^{\Z^d}\to\R_{+}$. In particular, 
\begin{align}\label{eq:moment_assump_cor}
\sum_{\bk\in \Z^d} \ex\big[ |C_{\bk}|^{\alpha}\big]=c=\sum_{\bk\in \Z^d}\ex \big[|C_{\bo,\bk}|^{\alpha}\big] <\infty  \, ,
\end{align}
and thus $\pr(\bC\in l_0)=1$, and consequently $\pr(\bTheta\in l_0)=1$, holds. 
Before we state the main result of this section, recall that $\|\bx\|=\max_{\bi\in \Z^d} |x_{\bi}|$ for every $\bx=(x_{\bi})_{\bi\in\Z^d}\in l_0$.

\tcb{
\begin{proposition}\label{prop:MA_Q}
Assume $\bC=(C_{\bi})_{\bi\in\Z^d}$ is an arbitrary element of $l_0$ and $\alpha>0$ such that 
\begin{align*}
c:=\sum_{\bk\in \Z^d} \ex\big[ |C_{\bk}|^{\alpha}\big] \in (0,\infty) \, .
\end{align*}
Moreover, assume that $\kappa$ is a $\{-1,1\}$-valued random variable independent of $\bC$.
\begin{itemize}
\item[(i)] Let $\bTheta$ be a random element of $l_0$ with distribution satisfying (\ref{eq:MA_spectral_2}) for all measurable $h:l_0\to \R_+$. Then $\bTheta$ is necessarily a spectral tail process of some stationary regularly varying random field with tail index $\alpha$.
\item[(ii)] Let $\bQ$ be a random element of $l_0$ with distribution 
\begin{align}\label{eq:MA_Q}
\pr(\bQ\in \cdot\, )=\frac{1}{\ex\big[ \|\bC\|^{\alpha}\big]} \ex\left[\ind{\frac{\bC \cdot \kappa }{\|\bC\|}\in \cdot \,} \cdot \|\bC\|^{\alpha}\right] \, .
\end{align} 
If $\bY$ is a tail process in $l_0$ with tail index $\alpha$ and whose spectral tail process $\bTheta$ satisfies (\ref{eq:MA_spectral_2}) for all measurable $h:l_0\to\R_{+}$, then $\bQ$ is one representative of the anchored spectral tail process. Moreover, the extremal index of $\bY$ satisfies
\begin{align*}
\vartheta=\frac{\ex[ \|\bC\|^{\alpha}]}{\sum_{\bk\in \Z^d}\ex\big[ |C_{\bk}|^{\alpha}\big]} \, .
\end{align*} 
\end{itemize}
%
%
\end{proposition}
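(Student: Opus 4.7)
The plan is to derive both parts from the characterization of representatives of the anchored spectral tail process established in Section~\ref{subs:anchor}, namely \Cref{prop:charact_anch_spectral} together with \Cref{rem:charact}. The key observation is that the random element $\bQ$ defined by (\ref{eq:MA_Q}) plays a dual role: it is the candidate representative in part (ii), and at the same time, through \Cref{rem:charact}, it is the generator of the spectral tail process whose distribution is prescribed by (\ref{eq:MA_spectral_2}) in part (i).

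I would first verify the abstract hypotheses of \Cref{prop:charact_anch_spectral} for this $\bQ$. The condition $\|\bQ\|=1$ almost surely is built into (\ref{eq:MA_Q}): the tilting weight $\|\bC\|^{\alpha}$ assigns zero mass to $\{\bC=\bo\}$, and on its complement $\bC\cdot\kappa/\|\bC\|$ has unit uniform norm. A short change-of-measure calculation then gives
\begin{equation*}
\ex\Big[\tsum_{\bk\in\Z^d} |Q_{\bk}|^{\alpha}\Big]=\frac{1}{\ex[\|\bC\|^{\alpha}]}\,\ex\Big[\tsum_{\bk\in\Z^d}|C_{\bk}|^{\alpha}\Big]=\frac{c}{\ex[\|\bC\|^{\alpha}]}<\infty \, ,
\end{equation*}
where the denominator is strictly positive because $c>0$ forces $\pr(\bC\neq\bo)>0$.

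The heart of the proof is then a single algebraic identity: plugging this $\bQ$ into the right-hand side of (\ref{eq:palmSpectral_basic_2}) reproduces the right-hand side of (\ref{eq:MA_spectral_2}). Unfolding the tilted law of $\bQ$ and using $|\kappa|=1$, the inner argument $\shift{\bk}\bQ/|Q_{\bk}|$ collapses to $\shift{\bk}\bC\cdot\kappa/|C_{\bk}|$, the product $|Q_{\bk}|^{\alpha}\cdot\|\bC\|^{\alpha}$ reduces to $|C_{\bk}|^{\alpha}$, and after combining with the outer normalizing factor $\ex[\tsum_{\bk}|Q_{\bk}|^{\alpha}]^{-1}=\ex[\|\bC\|^{\alpha}]/c$ the $\ex[\|\bC\|^{\alpha}]$-factors cancel, leaving exactly the right-hand side of (\ref{eq:MA_spectral_2}).

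Part (i) then follows directly from \Cref{rem:charact}. For part (ii), the same identity shows that $\bQ$ satisfies item (ii) of \Cref{prop:charact_anch_spectral}, which identifies it as a representative of the anchored spectral tail process of $\bY$. Finally, the expression for the extremal index follows from $\vartheta^{-1}=\ex[\tsum_{\bk}|Q_{\bk}|^{\alpha}]$ (a consequence of (\ref{eq:palmSpectral_basic}) evaluated at $h\equiv 1$, or equivalently of (\ref{eq:extremal_repre_spectral})), which combined with the computation above yields $\vartheta=\ex[\|\bC\|^{\alpha}]/c$. There is no genuine obstacle here: all the structural work has been done in Section~\ref{subs:anchor}, and what remains is essentially a single change-of-measure identity whose only delicate point is carefully tracking the cancellation of the $\|\bC\|$- and $\kappa$-factors.
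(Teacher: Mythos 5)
Your proof is correct, but it takes a genuinely different route from the one the paper displays — in fact it is precisely the \enquote{alternative proof} that the author sketches in the final remark of \Cref{sec:MA}. The paper proves (ii) by verifying item (iii) of \Cref{prop:charact_anch_spectral}, i.e.\ the debiasing identity (\ref{eq:palmSpectral_variant2_2}): it plugs the auxiliary function $\tilde{h}(\bx)=h(\bx/\|\bx\|)\,\|\bx\|^{\alpha}/\tsum_{\bi}|x_{\bi}|^{\alpha}$ into (\ref{eq:MA_spectral_2}) and reads off $\vartheta=\ex[\|\bC\|^{\alpha}]/c$ from the case $h\equiv 1$; for (i) it separately verifies the time-change formula (\ref{eq:time-change0}) (with details omitted) and invokes \Cref{prop:TCF_another_view} and \Cref{cor:tail_pr_charact}. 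You instead establish the single change-of-measure identity showing that the right-hand side of (\ref{eq:palmSpectral_basic_2}), with $\bQ$ as in (\ref{eq:MA_Q}), coincides with the right-hand side of (\ref{eq:MA_spectral_2}); this one computation then yields (ii) via item (ii) of \Cref{prop:charact_anch_spectral} and (i) via \Cref{rem:charact}. Your computation checks out: the $\|\bC\|$-factors cancel as claimed, $\ex[\tsum_{\bk}|Q_{\bk}|^{\alpha}]=c/\ex[\|\bC\|^{\alpha}]$, and the extremal index then follows from $\vartheta^{-1}=\ex[\tsum_{\bk}|Q_{\bk}|^{\alpha}]$ once $\bQ$ is known to be a representative. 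What your route buys is economy — one identity serves both parts, and you avoid the unproved \enquote{direct calculation} of the time-change formula on which the paper's proof of (i) rests; what the paper's route buys is that (ii) is handled entirely with shift-invariant test functions, which is the form in which the representative property is defined, and $\vartheta$ emerges as an immediate byproduct. Two minor points worth making explicit: $\ex[\|\bC\|^{\alpha}]\leq c<\infty$ (needed for (\ref{eq:MA_Q}) to define a probability), and $\ex[\|\bC\|^{\alpha}]>0$ since $c>0$.
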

}

\begin{remark}
Assume that the coefficients in (\ref{eq:MA_process}) are deterministic, that is if for an array $\mathbf{c}=(c_{\bk})_{\bk\in \Z^d}$ and every $\bi\in \Z^d$, $C_{\bi,\bk}= c_{\bk}$ for all $\bk\in \Z^d$. Then $\bC=\mathbf{c}$ and thus since $\|\bC\|=\|\mathbf{c}\|$ is constant, (\ref{eq:MA_Q}) reduces to 
$$\bQ\eind \kappa \cdot \frac{\mathbf{c}}{\|\mathbf{c}\|} \, ,$$ 
which coincides with~\cite[Example 3.1]{basrak:planinic:2020}.
\demo
\end{remark}

\begin{proof}[Proof of \Cref{prop:MA_Q}] \tcb{(i) Observe that $\pr(|\Theta_{\bo}|=1)=1$ holds by construction. Moreover, direct calculation shows that $\bTheta$ satisfies the time-change formula (\ref{eq:time-change0}) for all measurable $h:l_0\to\R_{+}$; we omit the details. Thus, the conclusion follows from \Cref{prop:TCF_another_view} and \Cref{cor:tail_pr_charact}.}

(ii) By \Cref{prop:charact_anch_spectral}, to show that $\bQ$ is a representative it is enough to check that (\ref{eq:palmSpectral_variant2_2}) holds
for all measurable $h:l_0\to \R_+$ which are shift-invariant. Let $h$ be an arbitrary such function, and define 
\begin{align*}
\tilde{h}(\bx)=h\left(\frac{\bx}{\|\bx\|}\right)\cdot  \frac{\|\bx\|^{\alpha}}{\sum_{\bi\in\Z^d}|x_{\bi}|^{\alpha}} \, , \, \bx=(x_{\bi})_{\bi\in \Z^d} \in l_0\setminus\{\bo\} \, .
\end{align*}
 Using (\ref{eq:MA_spectral_2}) for $\tilde{h}$ we have
\begin{align*}
\ex\left[h\left(\frac{\bTheta}{\|\bTheta\|}\right) \cdot \frac{\|\bTheta\|^{\alpha}}{\sum_{\bk \in \Z^d}|\Theta_{\bk}|^{\alpha}}\right]  &= \ex[\tilde{h}(\bTheta)] \\
&= c^{-1}  \ex\Big[ \tsum_{\bk\in \Z^d}\tilde{h}\Big(\frac{\shift{\bk}\bC \cdot \kappa}{|C_{\bk}|}\Big)\cdot |C_{\bk}|^{\alpha} \Big] \\
&= c^{-1}  \ex\Big[ \tsum_{\bk\in \Z^d}h\Big(\frac{\shift{\bk}\bC \cdot \kappa}{\|\bC\|}\Big) \frac{\|\bC\|^{\alpha}}{\sum_{\bj\in \Z^d} |C_{\bj}|^{\alpha}} \cdot |C_{\bk}|^{\alpha} \Big] \\
&= c^{-1}  \ex\Big[ h\Big(\tfrac{\bC \cdot \kappa}{\|\bC\|}\Big)\cdot \|\bC\|^{\alpha}  \cdot \tfrac{\sum_{\bk\in \Z^d}|C_{\bk}|^{\alpha}}{\sum_{\bj\in \Z^d} |C_{\bj}|^{\alpha}} \Big] \\
&= \tfrac{\ex[\|\bC\|^{\alpha}]}{c} \cdot \ex[h(\bQ)] \, ,
\end{align*}
where shift-invariance of $h$ was used to obtain the fourth equality. Taking $h\equiv 1$ yields
\begin{align*}
\frac{\ex\big[\|\bC\|^{\alpha}\big]}{c} = \ex\left[\frac{\|\bTheta\|^{\alpha}}{\sum_{\bk \in \Z^d}|\Theta_{\bk}|^{\alpha}}\right]\stackrel{(\ref{eq:extremal_repre_spectral})}{=}\vartheta \, ,
\end{align*}
and consequently that (\ref{eq:palmSpectral_variant2_2}) holds.
\end{proof}

Observe that one can write (\ref{eq:MA_spectral_2}) as 
\begin{align}\label{eq:MA_sprectral_3}
\pr(\bTheta \in \cdot \,)= \pr^*\Big(\frac{\shift{T}\bC \cdot \kappa}{|C_{T}|} \in \cdot \, \Big) \, ,
\end{align}
where $\pr^*$ is the probability satisfying
\begin{align}\label{eq:MA_spectral_3_chango_of_measure}
\pr^*(B)=c^{-1} \ex\left[ \tsum_{\bk\in \Z^d} |C_{\bk}|^{\alpha} \1{B} \right] \, ,
\end{align}
and $T=T(\bC)$ a random element of $\Z^d$ with distribution 
\begin{align}\label{eq:MA_spectral_3_T}
\pr^*(T=\bk \mid \bC)=\frac{|C_{\bk}|^{\alpha}}{\sum_{\bj\in \Z^d} |C_{\bj}|^{\alpha}} \, , \, \bk\in \Z^d \, .
\end{align} 
Analogous observation can also be made for the distribution of $\bQ$ in (\ref{eq:MA_Q}).

\begin{example}\label{exa:MA}
Let $(Z_{i})_{i\in \Z}$ be an i.i.d.\ sequence of $\mathrm{Pareto}(\alpha)$-distributed random variables for some $\alpha>0$. In particular, (\ref{eq:Z_RV}) holds with the same $\alpha$, and $p=1$ since $Z_i$'s are nonnegative. Further, let $(\varepsilon_i)_{i\in \Z}$ be an i.i.d.\ sequence of Bernoulli random variables such that $\pr(\varepsilon_0=0)=\pr(\varepsilon_0=1)=\tfrac12$, and assume $(\varepsilon_i)_i$ and $(Z_i)_i$ are independent. Finally, let $b>0$ be arbitrary and define a stationary moving average process $\bX=(X_i)_{i\in \Z}$ by
\begin{align*}
X_i=Z_i+  \varepsilon_i b Z_{i-1} \, , \, i\in \Z \, .
\end{align*} 
Note that (\ref{eq:MA_process}) holds with $C_{i,0}=1, C_{i,1}= \varepsilon_i b$ and $C_{i,k}=0, k\neq 0,1$, for all $i\in \Z$. 
\begin{enumerate}[leftmargin=*]
\item (\textbf{Tail process}) By \Cref{rem:moment_assump_mdep}, $(X_i)_i$ admits a tail process, and we will show that its spectral process $\bTheta=(\Theta_i)_{i\in \Z}$ satisfies $\Theta_i=0$ for $|i|\geq 2$ and
\begin{align}\label{eq:MA_example1_spectral}
(\Theta_{-1}, \Theta_0,\Theta_1) = \begin{cases}
(0,1,0) \, , & \text{w.p.} \; \frac{1}{2+b^\alpha} \\
(0,1,b) \, , & \text{w.p.} \; \frac{1}{2+b^\alpha} \\
(1/b,1,0) \, , & \text{w.p.} \; \frac{b^\alpha}{2+b^{\alpha}} \, .
\end{cases}
\end{align}

Indeed, the process $\bC=(\dots, C_{-1}, C_0, C_1, C_2, \dots)$ defined in (\ref{eq:C_new}) is given by
\begin{align*}
\bC=(\dots, 0, 1, \varepsilon_1 b, 0,\dots) \, ,
\end{align*}
that is,
\begin{align*}
\bC = \begin{cases}
 (\dots,0,1,0,0,\dots)\, , & \text{w.p.} \; \frac12 \\
(\dots,0,1,b,0,\dots) \, , & \text{w.p.} \; \frac12  \, .
\end{cases}
\end{align*}
In particular, the constant from (\ref{eq:moment_assump_cor}) equals
\begin{align*}
c=\sum_{k\in \Z} \ex\big[ |C_{k}|^{\alpha}\big] = \frac12 \cdot 1^{\alpha} + \frac12 \cdot (1 ^{\alpha} + b^{\alpha}) = 1 + \frac{b^\alpha}{2}  \, .
\end{align*}
moreover, if $\pr^*$ is the probability from (\ref{eq:MA_spectral_3_chango_of_measure}), one has
\begin{align*}
&\pr^*(\bC=(\dots,0,1,0,\dots))=\pr^*(\varepsilon_1=0)=c^{-1} \pr(\varepsilon_1=0) \cdot 1 = \frac{1}{2+b^\alpha} \, , \\
 &\pr^*(\bC=(\dots,0,1,b,0,\dots))=\pr^*(\varepsilon_1=1)=c^{-1} \pr(\varepsilon_1=1) \cdot (1 ^{\alpha} + b^{\alpha}) = \frac{1+b^\alpha}{2+b^\alpha} \, .
\end{align*}
Finally, given that $\bC=(\dots,0,1,b,0,\dots)$, the random index $T$ from (\ref{eq:MA_spectral_3_T}) satisfies
\begin{align*}
T=\begin{cases}
0 \, , & \text{w.p.} \; \frac{1}{1+b^\alpha} \\
1 \, , & \text{w.p.} \; \frac{b^\alpha}{1+b^\alpha} \, .
\end{cases}
\end{align*}
Taking evertying into account, (\ref{eq:MA_example1_spectral}) now follows from (\ref{eq:MA_sprectral_3}).
\item (\textbf{Anchored tail process}) We will use \Cref{prop:MA_Q} to determine a representative of the anchored spectral tail process. Here one needs to consider cases $b\leq 1$ and $b>1$ separately.
\begin{itemize}
\item[(i)] If $b\leq 1$, one has $\|\bC\|=1$ almost surely. Thus, \Cref{prop:MA_Q} gives that $\bQ:=\bC$ is one representative of the anchored spectral tail process, and that $\vartheta=c^{-1}=\tfrac{2}{2+b^\alpha}$.
\item[(ii)] If $b>1$, calculation similar to the one given in 1.\ above gives that $\bQ$ from (\ref{eq:MA_Q}) satisfies
 \begin{align}\label{eq:MA_example_Q}
\bQ=(\dots,Q_{-1},Q_{0},Q_1,Q_2,\dots) = \begin{cases}
 (\dots,0,1,0,0,\dots)\, , & \text{w.p.} \; \frac{1}{1+b^\alpha} \\
(\dots,0,1/b,1,0,\dots) \, , & \text{w.p.} \; \frac{b^\alpha}{1+b^\alpha}  \, .
\end{cases}
\end{align}
\end{itemize}
A representative of the anchored tail process is obtained by setting $\bZ:=Y\cdot \bQ$ where $Y$ is $\mbox{Pareto}(\alpha)$-distributed and independent of $\bQ$.
\end{enumerate}
To get some intuition note first that the process $\bX$ is $2$-dependent so, by  \Cref{rem:mdep_mixing}, (\ref{eq:randomized_cluster}) and (\ref{eq:randomized_origin}) hold. We will again use the reasoning from $\Cref{exa:MA_intro}$ -- for large $u$, whenever $Z_i>u$ occurs it implies that $X_i/u\approx Z_i/u$ and $X_{i+1}/u\approx \varepsilon_{i+1} b Z_{i}/u$, that is
\begin{align*}
X_{i+1}/u\approx \begin{cases}
 b Z_i/u , & \text{w.p.} \; \frac12 \\
0 \, , & \text{w.p.} \; \frac12  \, ,
\end{cases}
\end{align*}
see \Cref{fig:MA_intro} for an illustration in the case $b=1$.  In particular, if $b>1$ it might happen that $Z_i<u$ but that $Z_i$ is large enough so that $bZ_i>u$. This intuitively explains why the case $(\Theta_{-1}, \Theta_0,\Theta_1)=(1/b,1,0)$ in (\ref{eq:MA_example1_spectral}) is the most probable one when $b>1$ -- by choosing an exceedance over large $u$ by the process $\bX$ uniformly at random, one is most likely to choose the second observation in a cluster with two extreme observations (the first observation is also large but not necessarily larger than $u$). Similarly, since the typical cluster of extremes of $\bX$ is obtained by choosing uniformly at random a cluster of extremes of $\bX$ (having at least one observation exceeding  a large $u$), the same argument gives intuition on why the two cases in (\ref{eq:MA_example_Q}) are not equally probable when $b>1$.
\demo

\end{example}

\begin{example}\label{exa:MA_counter}
Assume that $(Z_i)_{i\in \Z}$ and $(\varepsilon_i)_{i\in \Z}$ are as in \Cref{exa:MA}, but consider the stationary moving average process $\bX=(X_i)_{i\in \Z}$ defined by
\begin{align*}
X_i=Z_i+  \varepsilon_0  Z_{i-1} \, , \, i\in \Z \, .
\end{align*} 
Note that the process $\bC=(\dots, C_{-1}, C_0, C_1, C_2, \dots)$ defined in (\ref{eq:C_new}) coincides with the one in \Cref{exa:MA} (for the case $b=1$ which is also considered in \Cref{exa:MA_intro} in the introduction). In particular, $\bX$ has the same tail and anchored tail process. However, the asymptotic distribution of the cluster containing and being centered around a randomly chosen exceedance-point of $\bX$ (i.e.\ the limit of the left hand side of (\ref{eq:randomized_origin})) is not equal to the distribution of the tail process.  Indeed, depending on the value of $\varepsilon_0$, $\bX$ is either an i.i.d.\ regularly varying sequence or a simple 2-dependent moving average process with deterministic coefficients.  It easy to verify (simply condition on the value of $\varepsilon_0$ and apply (\ref{eq:randomized_origin})) that the left hand side of (\ref{eq:randomized_origin}) converges weakly to $\pr(\bY'\in \cdot \,)$, where $\bY'=(Y_i')_{i\in \Z}$ satisfies
$Y_i'=0$ for $|i|\geq 2$ and
\begin{align*}
(Y_{-1}', Y_0',Y_1') = \begin{cases}
(0,Y_0',0) \, , & \text{w.p.} \; \frac12 \\
(0,Y_0',Y_0') \, , & \text{w.p.} \; \frac14 \\
(Y_0',Y_0',0) \, , & \text{w.p.} \; \frac14 \, ,
\end{cases}
\end{align*}
where $Y_0'$ is $\mathrm{Pareto}(\alpha)$-distributed, which is not the tail process of $\bX$. \demo
\end{example}

\begin{remark}
In the last two examples, for each $\bi\in \Z^d$ coefficients $\bC^{(\bi)}=(C_{\bi,\bk})_{\bk\in \Z^d}\teind \bC^{(\bo)}$ had the same distribution as $\bC=(C_{\bk,\bk})_{\bk\in \Z^d}$. In general, this is not the case. Consider  the moving average process $\bX=(X_i)_{i\in \Z}$ defined by
\begin{align*}
X_i=Z_i+  \varepsilon_i  Z_{i-1} +  \varepsilon_i  Z_{i-2} \, , \, i\in \Z \, ,
\end{align*} 
where $(Z_i)_{i\in \Z}$ and $(\varepsilon_i)_{i\in \Z}$ are again the same as in \Cref{exa:MA}. In this case,
\begin{align*}
\bC^{(0)}=(\dots, 0, 1, \varepsilon_0, \varepsilon_0,0,\dots) \, ,
\end{align*}
while
\begin{align*}
\bC=(\dots, 0, 1, \varepsilon_1 , \varepsilon_2,0,\dots) \, ,
\end{align*}
where, recall, $\varepsilon_1$ and $\varepsilon_2$ are independent. Thus, the distributions of $\bC$ and $\bC^{(0)}$ differ. \demo
\end{remark}

\begin{remark}

Under the assumptions of \Cref{prop:MA_Q}, let $\bQ$ be a random element of $l_0$ with distribution satisfying (\ref{eq:MA_Q}). Then
\begin{align*}
\ex\left[\tsum_{\bk \in \Z^d}|Q_{\bk}|^{\alpha}\right] = \frac{\sum_{\bk\in \Z^d} \ex\big[ |C_{\bk}|^{\alpha}\big]}{\ex[ \|\bC\|^{\alpha}]}<\infty \, ,
\end{align*} 
Moreover, if $\bTheta$ is a random element of $l_0$ with distribution defined by (\ref{eq:palmSpectral_basic_2}), it is easily checked that $\bTheta$ necessarily  satisfies (\ref{eq:MA_spectral_2}). This gives an alternative proof of of \Cref{prop:MA_Q} -- (i) follows from \Cref{rem:charact}, and (ii) is implied by  \Cref{prop:charact_anch_spectral}. \demo
\end{remark}

\section{Postponed proofs}\label{sec:appendix}
\begin{proof}[Proof of equivalence of (\ref{eq:time-change1}) and (\ref{eq:mecke})]
Assuming that (\ref{eq:time-change1}) holds yields
\begin{align*}
\ex\big[\tsum_{\bk\in \e(\bY)} g(-\bk , \shift{\bk}\bY)\big] & = \ex\big[\tsum_{\bk\in \Z^d} g(-\bk , \shift{\bk}\bY) \ind{|Y_{\bk}|>1}\big]   && \\
& = \tsum_{\bk\in \Z^d} \ex\big[g(-\bk , \shift{\bk}\bY) \ind{|Y_{\bk}|>1}\big]  && \\
& = \tsum_{\bk\in \Z^d} \ex\big[g(-\bk , \bY) \ind{|Y_{-\bk}|>1}\big] && \text{by }(\ref{eq:time-change1}) \\
&= \tsum_{\bk\in \Z^d} \ex\big[g(\bk , \bY) \ind{|Y_{\bk}|>1}\big] && \\
& =  \ex\big[\tsum_{\bk\in \e(\bY)} g(\bk , \bY)\big]  \, .
\end{align*}
On the other hand, for fixed $\bk'\in \Z^d$ and $h:\R^{\Z^d}\to \R_+$, define
 $$g(\bk,\bx)=h(\bx)\ind{|x_{\bo}|>1, \bk=-\bk'} \, , \quad \bx=(x_{\bi})_{\bi \in \Z^d} \in \R^{\Z^d} \, , \, \bk\in \Z^d \, .$$ 
 Equation (\ref{eq:mecke}) then gives 
\begin{align*}
\ex\left[h(\shift{\bk'}\bY)\ind{|Y_{\bk'}|>1}\right] & = \ex\big[\tsum_{\bk\in \e(\bY)} g(-\bk , \shift{\bk}\bY)\big] = \ex\big[\tsum_{\bk\in \e(\bY)} g(\bk , \bY)\big] \\
&= \ex\left[h(\bY)\ind{|Y_{\bo}|>1, |Y_{-\bk'}|>1}\right] = \ex\left[h(\bY)\ind{|Y_{-\bk'}|>1}\right] \, .
\end{align*}
\end{proof}

\begin{proof}[\tcb{Proof of (\ref{eq:campbell_tail})}]
For notational simplicity, we will prove the result for $d=1$; extension to the general case is straightforward. Due to the stationarity of $\bX$,
\begin{align*}
\ex\big[\tsum_{k =1}^n f(k/n, \shift{k}\bX / c_n) \1{\{|X_{k}|>c_n\}}\big] & = \tsum_{k =1}^n \ex\big[ f(k/n, \bX / c_n) \1{\{|X_{0}|>c_n\}}\big] \\
& = \ex\big[ \tsum_{k = 1}^n  f(k/n, \bX / c_n) \1{\{|X_{0}|>c_n\}}\big]\\
&= n\ex\big[\textstyle \int_0^1  f(\lceil nt \rceil / n, \bX / c_n)\, \dx t \, \1{\{|X_{0}|>c_n\}}\big] \\
&= n \pr(|X_{0}|>c_n) \ex\big[\textstyle \int_0^1  f(\lceil nt \rceil / n, \bX / c_n)\, \dx t \mid |X_{0}|>c_n\big] \\
&= n \pr(|X_{0}|>c_n) \ex\big[T_n(\bX/c_n) \mid |X_{0}|>c_n\big] \, ,
\end{align*}
where $T_n(\bx):= \textstyle \int_0^1  f(\lceil nt \rceil / n, \bx)\, \dx t$, $\bx\in \R^{\Z}$. 

By the choice of $(c_n)_n$, $n \pr(|X_{0}|>c_n)\to \tau$. On the other hand, since $f$ is bounded and continuous,  for every $\bx, \bx_{n} \in \R^{\Z}$, $n\in \N$, such that $\bx_n\to \bx$ in $\R^{\Z}$, the dominated convergence theorem implies that $T_n(\bx_n)\to T(\bx)$ in $\R$, where 
$$T(\bx):=  \int_0^1  f(t, \bx)\, \dx t \, , \, \bx\in \R^{\Z} \, . $$ 
Since the random variables $T_n(\bX/c_n)$, $n\in \N$, are uniformly bounded (and thus uniformly integrable), a generalized continuous mapping theorem given in \cite[Theorem 5.5]{billingsley:1968} and the definition of the tail process (\ref{eq:conv_to_tail}) yield
\begin{align*}
\ex\big[T_n(\bX/c_n) \mid |X_{0}|>c_n\big] \to \ex\big[T(\bY)\big] = \int_0^1  \ex\big[f(t, \bY)\big]\, \dx t \, .
\end{align*}
\end{proof}

\begin{proof}[Proof of necessity in \Cref{thm:Y_is_point_stat}]
Fix an arbitrary $\bk\in \Z^d$, $\bk\neq \bo$. The key ingredient of the proof is the construction of a rich enough family of bijective exceedance-maps $\tau_n$, $n\in \Z$. The construction is taken over from the proof of \cite[Theorem 4.1]{thorisson:2007}.

 Order the elements of the line $\Z\bk$ in the natural way: $m\bk<n\bk$ if $m<n$ for all $m,n\in \Z$. For $\bx\in \R^{\Z^d}$, let $\e_{\bk}(\bx)=\e(\bx)\cap \Z\bk$ be the set of all exceedance-points of $\bx$ lying on $\Z\bk$. Further, partition $\R^{\Z^d}_0$ into sets
\begin{align*}
A_1&=\{\bx\in \R^{\Z^d}_0 : \sup\e_{\bk}(\bx)=\infty \; \text{ and } \;\inf\e_{\bk}(\bx)=-\infty\} \, , \\
A_2&=\{\bx\in \R^{\Z^d}_0 : \sup{\e_{\bk}(\bx)}<\infty \; \text{ and } \; \inf\e_{\bk}(\bx)>-\infty\} \, , \\
A_3&=(A_1\cup A_2)^c \, .
\end{align*}

\begin{enumerate}
\item If $\bx\in A_1$, define $\tau_n(\bx)$, $n\in \Z$, exactly as in \Cref{exa:nth_point_to_the_right} with $\e(\bx)$ replaced by $\e_{\bk}(\bx)$.
\item Assume that $\bx\in A_2$ and denote by $N\in \N$ the total number of elements of $\e_{\bk}(\bx)$. Let $\bi_0,\dots, \bi_{N-1}$ be the ordered elements of $\e_{\bk}(\bx)$ and $k\in \{0,\dots, N-1\}$ such that $\bi_k=\bo$. For each $n\in \Z$, set
\begin{align*}
\tau_n(\bx)=\bi_{k+n \; (\mathrm{mod}\; N)} \, .
\end{align*}
\item Assume that $\bx\in A_3$ and, in particular, that $\inf\e_{\bk}(\bx)>-\infty$ and $\sup{\e_{\bk}(\bx)}=\infty$; the case $\inf\e_{\bk}(\bx)=-\infty$ and $\sup{\e_{\bk}(\bx)}<\infty$ is handled completely analogously. Let $\bi_0, \bi_1,\dots$, be ordered elements of $\e_{\bk}(\bx)$ and assume that $k\geq 0$ is such that $\bi_k=\bo$. Take an arbitrary bijection $\pi$ between $\N_0$ and $\Z$ and set $\bi_{m}':=\bi_{\pi^{-1}(m)}$, $m\in \Z$. For each $n\in \Z$, set
\begin{align*}
\tau_n(\bx)=\bi_{\pi(k)+n}' \, . 
\end{align*}
\end{enumerate}
Observe that, for all $n\in \Z$, $\tau_n$ is a bijective exceedance-map, and moreover 
$$\tau_{-n}(\shift{\tau_n}\bx)=-\tau(\bx) \, ,$$ 
for all $\bx\in \R^{\Z^d}_0$ ($\tau_{-n}$ deserves to be called the \textit{inverse} exceedance-map of $\tau_n$). Further, 
\begin{itemize}
\item[--] for $\bx\in A_1\cup A_2$, $|x_{\bk}|>1$ if and only if $\tau_1(\bx)=\bk$;
\item[--] for $\bx\in A_3$, $|x_{\bk}|>1$ if and only if $\tau_n(\bx)=\bk$ for exactly one $n\in \Z$.
\end{itemize}
Using the properties of $\tau_n$, $n\in \Z$, and the exceedance-stationarity of $\bY$, for an arbitrary measurable $h:\R^{\Z^d}_0 \to \R_{+}$,
\begin{align*}
\ex\left[h(\shift{\bk}\bY)\ind{|Y_{\bk}|>1}\right] = &  \ex\left[h(\shift{\tau_1}\bY)\ind{\tau_1(\bY)=\bk, \bY\in A_1\cup A_2}\right] \\
& + \sum_{n\in \Z} \ex\left[h(\shift{\tau_n}\bY)\ind{\tau_n(\bY)=\bk, \bY\in A_3}\right] \\
= &  \ex\left[h(\shift{\tau_1}\bY)\ind{\tau_{-1}(\shift{\tau_1}\bY)=-\bk, \shift{\tau_1}\bY\in A_1\cup A_2}\right] \\
& + \sum_{n\in \Z} \ex\left[h(\shift{\tau_n}\bY)\ind{\tau_{-n}(\shift{\tau_n}\bY)=-\bk, \shift{\tau_n}\bY\in A_3}\right] \\
= &  \ex\left[h(\bY)\ind{\tau_{-1}(\bY)=-\bk, \bY\in A_1\cup A_2}\right] \\
& + \sum_{n\in \Z} \ex\left[h(\bY)\ind{\tau_{-n}(\bY)=-\bk, \bY\in A_3}\right] \\
=& \ex\left[h(\bY)\ind{|Y_{-\bk}|>1}\right] \, .
\end{align*}
\end{proof}

\begin{proof}[Proof that (\ref{eq:TCF_inter1}) implies (\ref{eq:time-change0})]
By monotone convergence
\begin{align*}
\ex[h\left(|\Theta_{\bk}|^{-1} \shift{\bk}\bTheta \right) \ind{\Theta_{\bk}\neq 0}] =\lim_{s\to 0} \ex[\tilde{h}\left(\shift{\bk}\bY \right) \ind{|Y_{\bk}|> s}] \, , 
\end{align*}
where $\tilde{h}(\bx)=h(|x_{\bo}|^{-1} \bx)$, $\bx\in \R^{\Z^d}_0$. Now using (\ref{eq:TCF_inter1}) for $\tilde{h}$ yields
\begin{align*}
\ex[h\left(|\Theta_{\bk}|^{-1} \shift{\bk}\bTheta \right) \ind{\Theta_{\bk}\neq 0}] &=\lim_{s\to 0} s^{-\alpha}\ex[\tilde{h}\left(s\bY \right) \ind{|Y_{-\bk}|> 1/s}] \\
&= \lim_{s\to 0} s^{-\alpha}\ex[h\left(\bTheta \right) \ind{|Y_{-\bk}|> 1/s}] \\
&= \lim_{s\to 0} s^{-\alpha}\ex[h\left(\bTheta \right) \min\{|\Theta_{-\bk}|^{\alpha} s^{\alpha}, 1\}] \\ 
&= \lim_{s\to 0} \ex[h\left(\bTheta \right) \min\{|\Theta_{-\bk}|^{\alpha} , s^{-\alpha}\}] \\
&= \ex[h\left(\bTheta \right) |\Theta_{-\bk}|^{\alpha}] \, ,
\end{align*}
where we have used that $|Y_{\bk}|=|Y_{\bo}| \cdot |\Theta_{\bk}|$ with $|Y_{\bo}|$ being $\mathrm{Pareto}(\alpha)$-distributed and independent of $\Theta_{\bk}$ to get the third equality, and monotone convergence to get the final equality.
\end{proof}

\begin{proof}[Proof of (\ref{eq:anchored_process_cluster})]
If $\bZ$ is an arbitrary representative of the anchored process of $\bY$, \cite[Equation (3.15)]{basrak:planinic:2020} implies
\begin{align}\label{eq:anchored_process_cluster_inter}
\lim_{n\toi} \ex[h(\{c_n^{-1}X_{\bi} : \bi \in \{1,\dots,r_n\}^d\}) \mid \textstyle\max_{\bi \in \{1,\dots,r_n\}^d}|X_{\bi}|> c_n] = \ex[h(\bZ)] \, ,
\end{align}
for every bounded, measurable $h:l_0\to [0,\infty)$ which is shift-invariant (i.e.\ $h(\shift{\bk}\bx)=h(\bx)$ for all $\bx\in l_0, \bk\in \Z^d$) and such that $\pr(\bZ\in \mathrm{disc}\, h)=0$. Note that \cite[Equation (3.15)]{basrak:planinic:2020} is a consequence of \cite[Proposition 3.8]{basrak:planinic:2020} which also assumes that the sequence $(c_n)_n$ satisfies $\pr(|X_{\bo}|>c_n)\sim \tfrac{1}{n}$ as $n\toi$. However, the proof is valid for any $(c_n)_n$ and $(r_n)_n$ satisfying only $c_n,r_n\toi$ and (\ref{eq:AC}).

{Recall that the mapping $\shift{A}$ is shift-invariant, see (\ref{eq:SI_of_anchorShift}) and the discussion after this formula.  Moreover, it is easily checked that $\mathrm{disc}\, \shift{A} \subseteq \mathrm{disc}\, A$ (in fact, equality holds), and that $\mathrm{disc}\, A$ is a shift-invariant subset of $l_0$ (that is, $\bx\in \mathrm{disc}\, A$ implies $\shift{\bk}\bx\in \mathrm{disc}\, A$ for every $\bk\in \Z^d$). Since $\bZ$ is a representative of the anchored process (thus, $\shift{A}\bZ\teind \bZ^A$) and since $\pr(\bY \in \mathrm{disc}\, A)=0$, the latter fact and \Cref{cor:palm}(iii) imply that 
$\pr(\bZ \in \mathrm{disc}\, A)=0$. Thus, if $g:l_0\to [0,\infty)$ is an arbitrary continuous and bounded function, the function $h:=g \circ \shift{A}$ is bounded, shift-invariant, and $\pr(\bZ\in \mathrm{disc}\, h)=0$ since $\mathrm{disc}\, h \subseteq \mathrm{disc}\, \shift{A} \subseteq \mathrm{disc}\, A$. Therefore, (\ref{eq:anchored_process_cluster_inter}) applies and convergence in (\ref{eq:anchored_process_cluster}) now follows by the definition of weak convergence.}
\end{proof}

\begin{proof}[\tcb{Proof of \Cref{prop:randomized_origin_cluster}}]
Let $\overline{F}(x):=\pr(|X_0|>x)$ for $x\geq 0$.

\begin{itemize}
\item[1.] We first prove (\ref{eq:number_of_clusters_to_infty}) and (\ref{eq:randomized_cluster}). Under the stated assumptions, \cite[Lemma 10.1.1]{kulik:soulier:2020} and the continuous mapping theorem imply the convergence
\begin{align}\label{eq:randomized_origin_proof_1}
\frac{1}{n\overline{F}(c_n)} \sum_{j=1}^{k_n} h(c_n^{-1} \bX_{n,j}) \1{\{\|\bX_{n,j}\|>c_n\}} \dto \vartheta \int_{0}^{\infty} \ex[h(y\bQ)\1{\{\|y\bQ\|>1\}}] \alpha y^{-\alpha-1} \dx y \, , 
\end{align}
for every bounded, measurable and continuous $h:l_0\to [0,\infty)$ which is shift-invariant, where $\bQ$ is an arbitrary representative of the anchored spectral tail process. Since $\|\bQ\|=1$ a.s., \Cref{lem:polar_decomposition_Z}(iv) implies that the right hand side above equals $\vartheta\ex[h(\bZ)]$ for an arbitrary representative $\bZ$ of the anchored tail process. In fact, (\ref{eq:randomized_origin_proof_1}) holds for all $h$ which are not necessarily continuous, but satisfy $\pr(\bZ\in \mathrm{disc}\, h)=0$. Now the same argument as in the proof of (\ref{eq:anchored_process_cluster}) above yields that 
\begin{align}\label{eq:randomized_origin_proof_2}
\frac{1}{n\overline{F}(c_n)} \sum_{j=1}^{k_n} g(\shift{A}(c_n^{-1} \bX_{n,j})) \1{\{\|\bX_{n,j}\|>c_n\}} \Pto \vartheta\ex[g(\shift{A}\bZ)] = \vartheta\ex[g(\bZ^A)]\, , 
\end{align}
for every bounded, measurable and continuous $g:l_0\to [0,\infty)$; the limit holds in probability because the limit is a constant.

Applying (\ref{eq:randomized_origin_proof_2}) to $g\equiv 1$ yields (\ref{eq:number_of_clusters_to_infty}), which together with (\ref{eq:randomized_origin_proof_2}) yields
\begin{align}\label{eq:randomized_origin_proof_3}
\frac{1}{|N_n^c|} \sum_{j\in N_n^c} g(\shift{A}(c_n^{-1} \bX_{n,j}))=\frac{1}{|N_n^c|} \sum_{j=1}^{k_n} g(\shift{A}(c_n^{-1} \bX_{n,j})) \1{\{\|\bX_{n,j}\|>c_n\}} \Pto \ex[g(\bZ^A)]\, , 
\end{align}
for every bounded, measurable and continuous $g:l_0\to [0,\infty)$. Since the left hand side above is precisely $\ex\big[g\big(\shift{A}(c_n^{-1} \bX_{n,K_n})\big) \mid X_1,\dots, X_n\big]$, (\ref{eq:randomized_origin_proof_3}) and the dominated convergence theorem yield
\begin{align*}
\ex\big[g\big(\shift{A}(c_n^{-1} \bX_{n,K_n})\big)\big] \to   \ex[g(\bZ^A)]\, , 
\end{align*}
for every bounded, measurable and continuous $g:l_0\to [0,\infty)$, which proves (\ref{eq:randomized_cluster}).

\item[2.] We now turn to (\ref{eq:number_of_exc_to_infty}) and (\ref{eq:randomized_origin}). Essentially, we will generalize \cite[Proposition 10.1.4]{kulik:soulier:2020}, see (\ref{eq:randomized_origin_proof_goal2}) below. Let $\phi:l_0\to [0,\infty)$ be an arbitrary measurable, bounded and continuous function; w.l.o.g.\ assume that $\|\phi\|\leq 1$.

Define an another function  $h_\phi:l_0\to [0,\infty)$ by
$$h_{\phi}(\bx):= \sum_{k\in \e(\bx)} \phi(\shift{k}\bx) = \sum_{k\in \Z}\phi(\shift{k}\bx) \1{\{|x_{k}|>1\}}\, . $$ 
Note that $h_{\phi}$ is measurable, shift-invariant and such that $\pr(\bZ\in \mathrm{disc}\, h_{\phi})=0$, but it is not bounded. However, for every $M>0$ define $h_{\phi}^M(\bx):= \min{\{h_{\phi}^M(\bx), M\}}$, $\bx\in l_0$, and for all $n\in \N$, $M>0$, denote
\begin{align*}
W_n= \frac{1}{n\overline{F}(c_n)} \sum_{j=1}^{k_n} h_{\phi}(c_n^{-1} \bX_{n,j}) \1{\{\|\bX_{n,j}\|>c_n\}} \, ,
\end{align*}
and
\begin{align*}
W_n^M= \frac{1}{n\overline{F}(c_n)} \sum_{j=1}^{k_n} h_{\phi}^M(c_n^{-1} \bX_{n,j}) \1{\{\|\bX_{n,j}\|>c_n\}} \, .
\end{align*}
Convergence (\ref{eq:randomized_origin_proof_1}) implies that for every $M>0$, 
$$W_n^M\dto \vartheta \ex[h_{\phi}^M(\bZ)] \, , \quad \text{as $n\toi$} \, , $$ 
while the monotone convergence theorem yields that 
$$\lim_{M\toi} \vartheta \ex[h_{\phi}^M(\bZ)] = \vartheta \ex[h_{\phi}(\bZ)]  \, . $$
Observe that by the Palm duality (\ref{eq:tail_palm_construction}), the limit is
\begin{align*}
\vartheta \ex[h_{\phi}(\bZ)] = \frac{1}{\ex\abs{\e(\bZ)}} \ex\left[ \tsum_{k \in \e(\bZ)} \phi(\shift{k}\bZ)\right] = \ex[\phi(\bY)] \, .
\end{align*}
Thus, to prove that
\begin{align}\label{eq:randomized_origin_proof_goal}
\frac{1}{n\overline{F}(c_n)} \sum_{j=1}^{k_n} h_{\phi}(c_n^{-1} \bX_{n,j}) \1{\{\|\bX_{n,j}\|>c_n\}} = W_n\dto \ex[\phi(\bY)] \, ,
\end{align}
it is enough to show that
\begin{align}\label{eq:randomized_origin_proof_4}
\lim_{M\toi}\limsup_{n\toi} \ex\big|W_n - W_n ^M\big| = 0 \, .
\end{align}
By stationarity
\begin{align*}
\ex\big|W_n - W_n ^M\big| & \leq \frac{k_n}{n\overline{F}(c_n)} \ex\Big[\big|h_{\phi}(c_n^{-1}\bX_{n,1}) - h_{\phi}^M(c_n^{-1}\bX_{n,1})\big| \1{\{\|\bX_{n,1}\|>c_n\}}\Big] \\
&\leq \frac{k_n}{n\overline{F}(c_n)} \ex\Big[h_{\phi}(c_n^{-1}\bX_{n,1}) \1{\{\|\bX_{n,1}\|>c_n, h_{\phi}(c_n^{-1}\bX_{n,1})>M\}}\Big] \, ,
\end{align*}
where, recall, $\bX_{n,1}=(X_1,\dots, X_{r_n})$.
Since $\|\phi\|\leq 1$, $h_{\phi}(\bx) \leq \sum_{k\in \Z} \1{\{|x_{k}|>1\}} = |\e(\bx)| $ for  all $\bx\in l_0$, and thus
\begin{align*}
\ex\big|W_n - W_n ^M\big| & \leq \frac{k_n}{n\overline{F}(c_n)} \sum_{k=1}^{r_n} \pr(|X_k|>c_n, |\e(c_n^{-1}\bX_{n,1})|>M) \\
& \leq \frac{k_n}{n\overline{F}(c_n)} \sum_{k=1}^{r_n} \pr(|X_k|>c_n, |\e(c_n^{-1}(X_{k-r_n},\dots, X_{k+r_n}))|>M) \\
& \leq \frac{k_n r_n}{n\overline{F}(c_n)} \pr(|X_0|>c_n, |\e(c_n^{-1}(X_{-r_n},\dots, X_{r_n}))|>M) \, .
\end{align*}
Since $\lim_{n\toi}\tfrac{k_n r_n}{n}=1$, and since the map $\bx\mapsto \e(\bx)$ is a.s.\ continuous on $l_0$ w.r.t.\ the distribution of $\bY$, convergence in (\ref{eq:tail_process_cluster}) implies that
\begin{align*}
\limsup_{n\toi} \ex\big|W_n - W_n ^M\big| &\leq \pr(|\e(c_n^{-1}(X_{-r_n},\dots, X_{r_n}))|>M \mid |X_0|>c_n) \\
& = \pr(|\e(\bY)|>M) \, ,
\end{align*}
which holds for all but countably many $M$'s, and (\ref{eq:randomized_origin_proof_4}) now follows since $\pr(|\e(\bY)|<\infty)=1$.

Thus, for every measurable, bounded and continuous function $\phi:l_0\to [0,\infty)$,  (\ref{eq:randomized_origin_proof_goal}) holds, i.e.\
\begin{align}\label{eq:randomized_origin_proof_goal2}
\frac{1}{n\overline{F}(c_n)} \sum_{i=1}^{k_n r_n} \phi(c_n^{-1} \shift{i}\bX_{n}^i) \1{\{|X_i|>c_n\}} \Pto \ex[\phi(\bY)] \, ,
\end{align} 
where for each $n\in \N$ and $i\in \{1,\dots,k_n r_n\}$, $\bX_n^i$ is the block $\bX_{n,j}$ which contains $X_i$. Convergences (\ref{eq:number_of_exc_to_infty}) and (\ref{eq:randomized_origin}) now follow from (\ref{eq:randomized_origin_proof_goal2}) in the same way as (\ref{eq:number_of_clusters_to_infty}) and (\ref{eq:randomized_cluster}) followed from (\ref{eq:randomized_origin_proof_2}).
\end{itemize}
\end{proof}

\section*{Acknowledgements} 
The work of the author was supported by the grant IZHRZ0\_180549 from the Swiss National Science
Foundation and Croatian Science Foundation, project \enquote{Probabilistic and analytical aspects of generalised
regular variation}. 

%



\begin{thebibliography}{CSKM13}

\bibitem[Bil68]{billingsley:1968}
Patrick Billingsley.
\newblock {\em Convergence of probability measures}.
\newblock New York, Wiley, 1968.

\bibitem[BP21]{basrak:planinic:2020}
Bojan Basrak and Hrvoje Planini\'{c}.
\newblock Compound {P}oisson approximation for regularly varying fields with
  application to sequence alignment.
\newblock {\em Bernoulli}, 27(2):1371--1408, 2021.

\bibitem[BS09]{basrak:segers:2009}
Bojan Basrak and Johan Segers.
\newblock Regularly varying multivariate time series.
\newblock {\em Stochastic Process. Appl.}, 119(4):1055--1080, 2009.

\bibitem[CSKM13]{chiu:2013}
Sung~Nok Chiu, Dietrich Stoyan, Wilfrid~S. Kendall, and Joseph Mecke.
\newblock {\em Stochastic geometry and its applications}.
\newblock Wiley Series in Probability and Statistics. John Wiley \& Sons, Ltd.,
  Chichester, third edition, 2013.

\bibitem[DH95]{davis:hsing:1995}
Richard~A. Davis and Tailen Hsing.
\newblock {Point Process and Partial Sum Convergence for Weakly Dependent
  Random Variables with Infinite Variance}.
\newblock {\em The Annals of Probability}, 23(2):879 -- 917, 1995.

\bibitem[DHS18]{dombry:hashorva:soulier:2018}
Cl\'{e}ment Dombry, Enkelejd Hashorva, and Philippe Soulier.
\newblock Tail measure and spectral tail process of regularly varying time
  series.
\newblock {\em Ann. Appl. Probab.}, 28(6):3884--3921, 2018.

\bibitem[DJN21]{drees:2021}
Holger Drees, Anja Jan{\ss}en, and Sebastian Neblung.
\newblock Cluster based inference for extremes of time series.
\newblock {\em arXiv preprint arXiv:2103.08512}, 2021.

\bibitem[Has20]{hashorva:2020}
Enkelejd Hashorva.
\newblock On extremal index of max-stable random fields.
\newblock {\em arXiv preprint arXiv:2003.00727}, 2020.

\bibitem[Hev06]{heveling:thesis}
Matthias Heveling.
\newblock {\em Bijective point maps, point-stationarity and characterization of
  Palm measures}.
\newblock PhD thesis, 01 2006.

\bibitem[HL05]{heveling:last:2005}
Matthias Heveling and G\"{u}nter Last.
\newblock Characterization of {P}alm measures via bijective point-shifts.
\newblock {\em Ann. Probab.}, 33(5):1698--1715, 2005.

\bibitem[HS08]{hult:samorodnitsky:2008}
Henrik Hult and Gennady Samorodnitsky.
\newblock Tail probabilities for infinite series of regularly varying random
  vectors.
\newblock {\em Bernoulli}, 14(3):838--864, 2008.

\bibitem[Jan19]{janssen:2019}
Anja Jan{\ss}en.
\newblock Spectral tail processes and max-stable approximations of multivariate
  regularly varying time series.
\newblock {\em Stochastic Processes and their Applications}, 129(6):1993 --
  2009, 2019.

\bibitem[KS20]{kulik:soulier:2020}
Rafal Kulik and Philippe Soulier.
\newblock {\em Heavy-tailed time series}.
\newblock Springer, 2020.

\bibitem[LP17]{last:penrose:2017}
G\"{u}nter Last and Mathew Penrose.
\newblock {\em Lectures on the Poisson Process}.
\newblock Institute of Mathematical Statistics Textbooks. Cambridge University
  Press, 2017.

\bibitem[LT11]{last:thorisson:2011}
G\"{u}nter Last and Hermann Thorisson.
\newblock What is typical?
\newblock {\em Journal of Applied Probability}, 48(A):379–389, 2011.

\bibitem[PS18]{planinic:soulier:2018}
Hrvoje Planini\'{c} and Philippe Soulier.
\newblock The tail process revisited.
\newblock {\em Extremes}, 21(4):551--579, 2018.

\bibitem[Sou21]{soulier:2021}
Philippe Soulier.
\newblock The tail process and tail measure of continuous time regularly
  varying stochastic processes.
\newblock {\em Extremes}, 2021+.

\bibitem[SW19]{sigman:whitt:2019}
Karl Sigman and Ward Whitt.
\newblock Marked point processes in discrete time.
\newblock {\em Queueing Systems}, 92(1):47--81, 2019.

\bibitem[Tho00]{thorisson:2000}
Hermann Thorisson.
\newblock {\em Coupling, stationarity, and regeneration}.
\newblock Probability and its Applications (New York). Springer-Verlag, New
  York, 2000.

\bibitem[Tho07]{thorisson:2007}
Hermann Thorisson.
\newblock The {P}alm-duality for random subsets of {$d$}-dimensional grids.
\newblock {\em Adv. in Appl. Probab.}, 39(2):318--325, 2007.

\bibitem[Tho18]{thorisson:2018}
Hermann Thorisson.
\newblock On the modified palm version.
\newblock {\em Advances in Applied Probability}, 50:271--280, 12 2018.

\bibitem[WS20]{wu:samorodnitsky:2020}
Lifan Wu and Gennady Samorodnitsky.
\newblock Regularly varying random fields.
\newblock {\em Stochastic Process. Appl.}, 130(7):4470--4492, 2020.

\end{thebibliography}

\end{document}